\documentclass[noams]{compositio}
\usepackage{lineno,hyperref,longtable, multirow,graphicx,color,appendix}

\hypersetup{
	colorlinks = true,
	linkcolor = blue,
	anchorcolor = blue,
	citecolor = blue,
	filecolor = blue,
	urlcolor = blue
}
\usepackage{epsfig}  		
\usepackage{epic,eepic}
 \usepackage{color}
 \usepackage[all,arc]{xy}
\usepackage{mathrsfs}
\usepackage[normalem]{ulem}
\usepackage{amsmath,amssymb,enumerate,shellesc}
\newtheorem{mainthm}{Theorem}

\newtheorem{theorem}{Theorem}[section]
\newtheorem{proposition}[theorem]{Proposition}
\newtheorem{lemma}[theorem]{Lemma}
\newtheorem{corollary}[theorem]{Corollary}

\theoremstyle{definition}

\theoremstyle{remark}

\newtheorem{remark}[theorem]{Remark}

\newcommand{\R}{\mathbb{R}}  

\newcommand{\C}{\mathbb{C}}
\newcommand{\N}{\mathbb{N}}
\newcommand{\Z}{\mathbb{Z}}
\newcommand{\fl}{f_{\lambda}}
\newcommand{\fz}{\{f_{\lambda}^n(z)\}_{n=1}^{\infty}}
\newcommand{\f}{\{f_{\lambda}^n\}_{n=1}^{\infty}}
\newcommand{\J}{\mathcal{J}(\{f_{\lambda}^n\}_{n=1}^{\infty})}
\newcommand{\I}{\mathcal{I}(\{f_{\lambda}^n\}_{n=1}^{\infty})}
\newcommand{\F}{\mathcal{F}(\{f_{\lambda}^n\}_{n=1}^{\infty})}

\renewcommand{\contentsname}{Contents\\{\footnotesize\normalfont(A table
of contents should normally not be included)}}
\begin{document}
\title[non-autonomous iterations of exponential maps]{On the Julia sets of the non-autonomous iteration of exponential maps }

\author{Jiaxing Huang}
\email{hjxmath@szu.edu.cn}
\address{School of Mathematical Sciences, Shenzhen University, 518061, P. R. China}

\author{Xiao Yao}
\email{yaoxiao@nankai.edu.cn}
\address{School of Mathematical Sciences, Nankai University, P.R. China}
\classification{ Primary: 37F10, Secondary: 30D05.}
\keywords{exponential maps; non-autonomous iterations; Julia set}
\thanks{The first author was supported by NSFC of China (No. 12201420 and 12231013) and the NSF of Guangdong Province (No. 2026A1515010851), and the second author was supported by NSFC of China (No.12371074).}

\begin{abstract}
This paper studies non‑autonomous iterations of $\lambda e^z$, where at each step the parameter $\lambda$ is chosen from a bounded interval. Urbański and Zdunik  introduced and developed a thermodynamic formalism for random dynamics of exponential maps, extending the autonomous result established independently by Rees and Lyubich to typical parameter sequences in $[A, B]^{\mathbb{N}}$ with $1/e<A<B<\infty$.

In this paper, we show that the same conclusion indeed holds for all parameters in $[A, B]^{\mathbb{N}}$, Our proof is based on carefully crafted uniform versions of pullback arguments, inspired by the original methods of Rees and Lyubich.
In a related direction, Urbański and Zdunik established a non‑autonomous analogue of Misiurewicz’s theorem for the exponential map, later refined by Lech. We further extend their work by providing a refined condition for the Julia set to be the entire complex plane. 
\end{abstract}

\maketitle
\section{Introduction and main results}
The dynamics of the exponential family $E_{\lambda}(z)=\lambda e^z,\lambda\in\C\setminus\{0\}$ occupies a central position in the development of transcendental dynamics. Despite its deceptively simple form, this family exhibits remarkably rich and intricate dynamical behaviour, both from the perspective of topological dynamics and ergodic theory. Significant progress in understanding its properties has been made over the past four decades, see, for instance, \cite{Misiu, BaR84, DK84, Lyu87, Rees86, RS09, Dob15, AC24} and reference therein.

When one turns to non‑autonomous transcendental dynamics, the iteration of exponential maps arises naturally as the “simplest” family in this setting. Indeed, Urbański, Zdunik \cite{UZ21}, and Lech \cite{Lez21} have extended Misiurewicz’s classical theorem \cite{Misiu} on the autonomous exponential map to the non‑autonomous case. Moreover, Urbański and Zdunik \cite{UZ21} developed a systematic thermodynamical formalism for random iterations of exponential maps, building upon a series of their foundational works on classical exponential dynamics \cite{UZ03, UZ04, UZ04b, UZ07, UZ21}. Barański and Karpińska \cite{BK} systematically studied the fractal dimension of the non‑autonomous escaping set for the exponential family with a given escape rate.

Nevertheless,  due to the absence of standard notions such as (repelling) periodic orbits, recurrence, and ergodicity in the non‑autonomous framework, the overall global dynamical picture of the non‑autonomous exponential family remains far from clear.
It is widely believed, however, that the non-autonomous dynamics of the exponential family remains highly complex and rich in some sense. Key concepts from the autonomous iteration theory, such as the structure of dynamic rays and the classification of escaping points, are expected to admit suitable analogues in this setting. Some initial steps in this direction have already been anticipated in the work of Rottenfusser el at \cite{RRRS11}.

In this paper, we study non-autonomous iterates of exponential maps $f_{\lambda}(z)=\lambda e^z$ where at each step the parameter $\lambda$ is  chosen from a bounded real interval $[A, B]\subset\R$, $0<A<B<\infty$.
Such a non-autonomous iteration can be described as a skew product dynamical system.  
Let $\Lambda$ be a  compact set in $\mathbb{C}$ and $X=\Lambda^{\mathbb{N}}$.  Define the shift map $\sigma$ from  $X$ to itself by
$$
\sigma((\lambda_{n})_{n=1}^{\infty})=
(\lambda_{n+1})_{n=1}^{\infty}. 
$$
The skew product dynamical system is defined by
$F: X\times \mathbb{C}\mapsto X\times \mathbb{C}$,
$$
F(\lambda, z)=(\sigma \lambda, f_{\lambda_1}(z)).
$$
Thus, we have
$$
F^m(\lambda, z)=(\sigma^{m}\lambda, f_{\lambda}^m(z)),
$$
where $f_{\lambda}^{m}=f_{\lambda_m}\circ\cdots \circ f_{\lambda_1}$. So, for each sequence $\lambda\in X$, one can determine a sequence of non-automonous iterates $\{f_{\lambda}^n\}_{n\in\N}$.
Analogously to the autonomous case, the non-autonomous Fatou set $\F$ is the set of all $z\in\C$ for which $\{f_{\lambda}^n\}_{n=1}^{\infty}$ is normal on a neighbourhood of $z$, and the non-autonomous Julia set $\J$ is simply the complement of the Fatou set $\F$.

In 1981, Misiurewicz \cite{Misiu} proved that the Julia set of the exponential map $\exp(z)$ is the entire complex plane. This foundational result inspired extensive work on its measurable dynamics. For example, Rees \cite{Rees86} and Lyubich \cite{Lyu87} independently showed that for $\exp(z)$, almost every forward orbit in $\C$ accumulates only on the post‑singular set. Later, Urbański and Zdunik \cite{UZ07,UZ21} and Hemke \cite{Hem} generalized this to broader classes of exponential maps, including those with a super‑growing singular value (in the sense of iterations) and random iterations of non‑hyperbolic exponentials. For related results in the setting of general meromorphic functions, we refer the reader to \cite{KK97, KK99, Sko03, Sko05, KU06, CJK24}.

In the autonomous setting, such results are part of a larger dynamical paradigm. For meromorphic functions, especially those of finite type, the asymptotic behaviour of orbits in the Julia set is closely related to recurrent and ergodic properties, and the absence of invariant line fields (see e.g. \cite{RVS11}). A fundamental dichotomy, originating in the work of Lyubich \cite{Lyu83} and McMullen \cite[Theorem 3.9]{McM91} for rational maps of degree at least 2, illustrates this link: either the Julia set is the whole Riemann sphere and the map acts ergodically, or almost every orbit of Julia sets is attracted by the post‑critical (or, more generally, post‑singular) set. This dichotomy was later extended by Bock \cite{Bock} to general meromorphic functions. Thus, in the autonomous case, describing the limit sets of forward orbits in Julia sets, somehow, is central to understanding whether the system exhibits global ergodicity and recurrence.

Therefore, it is natural to ask whether an analogue of the Lyubich–McMullen dichotomy holds in the non‑autonomous case.  However, due to the absence of forward and backward properties of Fatou and Julia sets, together with the lack of periodicity in the Fatou set, the classical ergodic and recurrence approach seems inapplicable to the non-autonomous framework. For this reason, the present paper focuses on the corresponding structural questions in the non‑autonomous framework,  while the issues of ergodicity and recurrence will be addressed in future work. Specifically, we address:
\begin{enumerate}[(I)]
\item \emph{When does the Julia set fill the whole plane}?
\item \emph{What can be said about the limit sets of forward orbits on the Julia set}?
\end{enumerate}
We aim to establish counterparts of the above autonomous dichotomy for sequences of exponential maps, thereby extending the earlier results of Rees, Lyubich, Urbański–Zdunik and Hemke to non‑autonomous systems. In particular, we identify conditions under which the Julia set is the whole plane, and describe the typical orbits in the Julia set.

Now, we would like to concern the first question. Some earlier work gives partial answers:


\begin{mainthm}[\cite{UZ21}, Theorem 7]
Let $\lambda$ be a sequence in $[A, B]^{\N}, A>1/e$. Then the Julia set $$\J=\C.$$
\end{mainthm}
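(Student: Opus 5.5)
Our plan is to follow Misiurewicz's strategy, using the real line as the engine of expansion. The key elementary fact is that for $\lambda\in[A,B]$ with $A>1/e$ the real map $x\mapsto \lambda e^x$ has no fixed point. Indeed, $\lambda e^x - x$ has minimum $1+\log\lambda>0$ at $x=-\log\lambda$. Since this bound is uniform in $\lambda\in[A,B]$, every real point satisfies $f_{\lambda_n}(x)-x\ge \delta:=1+\log A>0$. Consequently, for every $\lambda\in[A,B]^{\N}$ and every $x\in\R$, the orbit $f_\lambda^n(x)$ tends to $+\infty$ super-exponentially. Moreover $(f_\lambda^n)'(x)=\prod_{k=1}^{n} f_\lambda^{k}(x)$, which also tends to infinity. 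In particular $\R\subset\J$: any neighbourhood of a real point contains a real interval whose images have diameter tending to infinity, while the real axis is mapped into itself. Normality would then force a limit that is either a holomorphic function or $\infty$. Real points go to $\infty$, so the limit would be $\infty$ locally uniformly, and this contradicts the derivatives blowing up while the images stay on an interval $I_n\subset\R$. We plan to make this rigorous by Marty's criterion: the spherical derivative $|(f^n)'|/(1+|f^n|^2)$ is unbounded near real points because $(f^n)'$ grows far faster than $|f^n|^2$, namely $\prod f^k$ versus $(f^n)^2$ with $f^{n}\approx e^{f^{n-1}}$.

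Next, suppose $U$ is a Fatou component, or just a disc $D$ on which $\{f_\lambda^n\}$ is normal. We will show that some image $f_\lambda^n(D)$ meets $\R$ in a way that contradicts normality. The standard argument splits into cases. First, if $\mathrm{Im} f_\lambda^n(z)$ hits a multiple of $\pi$, then the next image lands on $\R$. Since $\mathrm{Im}\,f_{\lambda}(w)=\lambda e^{\mathrm{Re} w}\sin(\mathrm{Im} w)$, an image of $D$ containing a vertical segment of length $2\pi$ maps onto a set meeting the real axis in an open set. Then some later iterate of $D$ meets $\R$, and we need $\R\subset\J$ to be transported backwards. In the non‑autonomous setting the Julia set satisfies $\mathcal J(\{f^{n}_{\sigma^k\lambda}\})=f^k_\lambda(\J)$ up to the usual complete invariance, which holds since each $f_{\lambda_k}$ is a nonconstant entire map, open and locally biholomorphic. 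Since the Julia set of the shifted sequence contains $\R$ for every shift, the argument of the first paragraph is uniform over $\sigma^k\lambda$, and this yields the contradiction.

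The main obstacle is to show that images of $D$ must eventually have large imaginary extent, i.e.\ that they cannot stay in a horizontal strip avoiding $\pi\Z$ forever. We plan to use the hyperbolic‑metric/expansion argument of Misiurewicz. On $\C\setminus\R$-type regions, the maps satisfy $|f_{\lambda_n}'(w)|=|f_{\lambda_n}(w)|$. If the orbit of $D$ stays in $\{|\mathrm{Im}|<\pi\}\setminus\R$, each $f_{\lambda_n}$ maps the half‑strip into the upper or lower half‑plane. Passing to the strip $S=\{0<\mathrm{Im} w<\pi\}$, the restriction $f_{\lambda}:S\to\mathbb H$ is a conformal isomorphism. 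Hence the inclusion $\mathbb H\cap S\hookrightarrow\mathbb H$ forces strict hyperbolic expansion, with a uniform factor on compact pieces because $\lambda$ ranges in a compact set. We would first try to show that if the orbit of a point in $D$ stays in $S$, then the hyperbolic derivative of $f^n_\lambda$ with respect to $\rho_S$ grows exponentially unless $\mathrm{Re} f^n_\lambda\to+\infty$. In the escaping case we compare with the real orbit: $\mathrm{Im}$ grows like $\lambda e^{\mathrm{Re}}\sin(\mathrm{Im})$, and it leaves $S$ in one step once $\mathrm{Re}$ is large. In either case, a small disc is blown up to hyperbolic diameter exceeding that of $S$, which is impossible for images contained in $S$. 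Therefore some image meets $\R$ or crosses $\pi\Z$, giving the contradiction and $\J=\C$.
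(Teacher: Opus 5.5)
Your outline matches the paper's: put $\mathbb{R}$ in the Julia set of every shifted sequence, note that the images $W_j=f^j_\lambda(D)$ of a Fatou disc must then avoid $Y=\bigcup_k(\mathbb{R}+k\pi i)$, and compare strip and half-plane hyperbolic metrics. However, two of the steps you propose fail as written.

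\textbf{First gap: the Marty step is backwards.} At a real point,
$(f^n_\lambda)^{\#}(x)\le\prod_{k=1}^{n-1}f^k_\lambda(x)/f^n_\lambda(x)$.
The numerator is $O((f^{n-1}_\lambda(x))^2)$, while $f^n_\lambda(x)\ge Ae^{f^{n-1}_\lambda(x)}$, so the spherical derivative tends to $0$ rather than to infinity. This is no accident: wherever $f^n_\lambda\to\infty$ locally uniformly, $1/f^n_\lambda\to 0$ and the spherical derivatives tend to $0$. So Marty's criterion would have to be applied at non-real points, about which your growth comparison says nothing. You need a geometric argument instead. The paper proves that escaping points are Julia points (Theorem \ref{thm-escaping-Julia}) using the logarithmic lift, the Eremenko--Lyubich expansion estimate and Koebe's $1/4$ theorem. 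A shorter substitute in your setting runs as follows.
\begin{itemize}
\item Suppose $f^n_\lambda\to\infty$ uniformly on a disc $D'$ about $x\in\mathbb{R}$. Since $|f^{m+1}_\lambda|=\lambda_{m+1}e^{\Re f^m_\lambda}$, we get $\Re f^m_\lambda>0$ on $D'$ for all large $m$.
\item Then $\Re f^{n+1}_\lambda=\lambda_{n+1}e^{\Re f^n_\lambda}\cos(\Im f^n_\lambda)>0$ forces $\cos(\Im f^n_\lambda)>0$ on $D'$.
\item The connected set $f^n_\lambda(D')$ contains the real point $f^n_\lambda(x)$, so it lies in $\Sigma=\{|\Im w|<\pi/2\}$.
\item The density of $\Sigma$ is constant on $\mathbb{R}$, so Schwarz--Pick bounds $|(f^n_\lambda)'(x)|$. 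This contradicts $(f^n_\lambda)'(x)=\prod_{k\le n}f^k_\lambda(x)\to\infty$.
\end{itemize}

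\textbf{Second gap: the hyperbolic step does not give your dichotomy.} Write $z_j=f^j_\lambda(z_0)$ and $W_j\subset U_{k_j}$. The map $f_{\lambda_j}$ is an isometry from $U_{k_{j-1}}$ onto a half-plane $H\supset U_{k_j}$. Hence the expansion factor at step $j$ is exactly $\rho_{U_{k_j}}(z_j)/\rho_H(z_j)=|\Im z_j|/|\sin\Im z_j|$. This depends only on $\Im z_j$ and tends to $1$ as $\Im z_j\to 0$, whatever $\Re z_j$ does. So Schwarz--Pick gives only the bounded product \eqref{eq-product}, i.e.\ $\Im z_j\to 0$. Hyperbolic geometry alone does not exclude an orbit with $\Im z_j\to0$ and bounded real parts.

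There are two further problems with this paragraph. You may not assume $W_j\subset S$: $W_j$ can lie in any $U_k$ with $k\notin\{0,-1\}$ without meeting $Y$, so a point ``leaving $S$'' is no contradiction. The chain over all strips, as in the paper, handles this, since such visits contribute factors $>\pi$. Also, $S$ has infinite hyperbolic diameter, so the contradiction must come from Schwarz--Pick at a point.

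The missing ingredient is the real-part drift, which uses $A>1/e$ again:
$\Re z_{j+1}-\Re z_j\ge 1+\log(A\cos\Im z_j)\ge\frac12(1+\log A)$
once $|\Im z_j|$ is small, so $\Re z_j\to+\infty$. You can then finish in either of two ways. Conclude as the paper does: $z_0$ escapes and $\mathcal{I}\subset\mathcal{J}$. Or note that $|\Im z_{j+1}|=\lambda_{j+1}e^{\Re z_j}|\sin\Im z_j|\ge 2|\Im z_j|$ for large $j$, which contradicts $\Im z_j\to 0$ because $\Im z_j\neq 0$. Your claim that the orbit leaves $S$ ``in one step'' is false when $\Im z_j$ is tiny, but this doubling argument is enough.
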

Lech later generalized this by considering sequences with terms possibly equal to $1/e$.
\begin{mainthm}[\cite{Lez21}, Theorem 3.1]\label{thm-Lez}
Let $\lambda=(\lambda_n)_{n\in\N}$ be a sequence in $([A, B]\cup\{\frac{1}{e}\})^{\N}$, where $1/e<A<B<\infty$. Then the Julia set $\J$ is the whole complex plane if and only if $\lambda_n>A$ for infinitely many $n$.
\end{mainthm}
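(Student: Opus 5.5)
The ``only if'' direction and the ``if'' direction need quite different arguments, so I would treat them separately.

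For ``only if'', suppose $\lambda_n=1/e$ for all $n>N$. Then
$$f_\lambda^{\,n}=E_{1/e}^{\,n-N}\circ f_\lambda^{N}\qquad (n>N).$$
The autonomous map $E_{1/e}$ has a parabolic fixed point at $1$, and its Fatou set contains the parabolic basin. This basin is open and contains $(-\infty,1)$. Since $f_\lambda^N$ is a non-constant entire map, it is open, so $f_\lambda^{-N}(\mathcal F(E_{1/e}))$ is a nonempty open set. On this set the family $\{E_{1/e}^{\,m}\circ f_\lambda^N\}$ is normal, and the finitely many maps $f_\lambda^1,\dots,f_\lambda^N$ do not affect normality. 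Hence $\F\neq\emptyset$.

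For ``if'', I would follow the Misiurewicz scheme, as adapted in \cite{UZ21}. The first step is the behaviour on the real line. Put $c=\min_{x\in\R,\ \lambda\in[A,B]}(\lambda e^x-x)>0$, which is positive because $A>1/e$. Also $e^{x-1}-x\ge 0$ for all real $x$. Hence for real $x$ the orbit $f_\lambda^n(x)$ is non-decreasing, and it increases by at least $c$ at every index with $\lambda_n\neq 1/e$. Since there are infinitely many such indices, $f_\lambda^n(x)\to+\infty$ locally uniformly on $\R$. Because $(f_\lambda^n)'(z)=\prod_{k=1}^n f_\lambda^k(z)$, the derivatives along real orbits tend to $\infty$. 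Together with the escape, this forces $\R\subset\J$: a normal limit on a real interval would have to be $\infty$, but the non-autonomous Fatou set is backward invariant in the sense that $z\in\F$ iff $f_{\lambda_1}(z)\in\mathcal F(\{f_{\sigma\lambda}^n\})$. Pulling back along $f_{\lambda_k}$ then shows that every point of $\bigcup_{k}\bigcup_{n}f_\lambda^{-n}(\R)$ lies in $\J$. This set contains all lines $\{\operatorname{Im} z=k\pi\}$ and their iterated preimages.

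Now suppose a disk $D\subset\F$. The previous step gives $f_\lambda^n(D)\cap \{\operatorname{Im} z\in\pi\Z\}=\emptyset$ for all $n$, so each $f_\lambda^n(D)$ lies in a horizontal strip of height $\pi$. Its image under the next map then lies in a sector of angle $\pi$. The key estimate I would aim for is that $f_\lambda^n(D)$ eventually contains a disk of radius $>\pi$. Such a disk meets a line $\operatorname{Im} z=k\pi$, which gives the contradiction.

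To get this growth I would use the hyperbolic metric of a domain $\Omega$ containing all the sets $f_\lambda^n(D)$, say $\Omega=\C\setminus\{\operatorname{Im} z\in\pi\Z\}$ intersected with the image sector. The plan is to show that each $f_{\lambda_k}$, restricted to a component, expands the hyperbolic metric of $\Omega$ by a factor that is uniformly $>1$ for $\lambda_k\in[A,B]$, and is $\ge 1$ for $\lambda_k=1/e$. The uniformity should come from the compactness of $[A,B]$, via the same Schwarz–Pick argument as in \cite{UZ21}. The derivative of $f_\lambda^n$ in the hyperbolic metric then tends to $\infty$ on $D$. Combined with Bloch's theorem, this rules out normality unless the orbit of $D$ accumulates on the boundary in a controlled way; alternatively it shows $f_\lambda^n(D)$ contains large disks.

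The step I expect to be the main obstacle is controlling the blocks of consecutive $1/e$-steps, where there is no uniform expansion. There the orbit could sit near the parabolic point $1$, and long strings of $f_{1/e}$ could shrink images. I would handle this by noting that $f_\lambda^n(D)$ avoids $\R$, and hence avoids the parabolic basin's real axis, so the parabolic attraction cannot act on it. I would then argue that expansion is at least non-contracting during $1/e$-blocks and strictly gains a fixed factor at each of the infinitely many $[A,B]$-steps. If that fails, the fallback is a subsequence argument. Normality would give a limit function $g$ on $D$. Using $(f^n)'=\prod f^k$, $g$ must be constant, with the orbit converging to a point $w$ in the closure of a strip away from $\R$, or to $\infty$. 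The case of a finite $w$ would be excluded because along the $[A,B]$-steps the orbit would have to approach a fixed point of some $\lambda e^z$ with $\lambda\in[A,B]$, and all such fixed points are repelling. The case $\infty$ would be excluded by the $\operatorname{Re}\to-\infty$ / $\operatorname{Re}\to+\infty$ dichotomy: large negative real part maps near $0$, hence near $\R$, and large positive real part forces huge expansion.
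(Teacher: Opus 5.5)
Your ``only if'' argument via the parabolic basin of $E_{1/e}$ is correct. The paper does not prove that direction; it only cites Lech. Note also that you prove the intended version, with ``$\lambda_n\in[A,B]$ for infinitely many $n$''. Read literally with ``$\lambda_n>A$'', the ``only if'' is false, since $\lambda_n\equiv A$ gives $\mathcal{J}=\C$.

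The ``if'' direction is the special case $\limsup\lambda_n\ge A>1/e$ of Theorem~\ref{thm-whole-plane}, and your sketch of it has two genuine gaps. The first is the step $\R\subset\mathcal{J}$. Escape together with $|(f_\lambda^n)'|\to\infty$ along real orbits does not contradict normality: the iterates of $z\mapsto 2z$ near $z=1$ have both properties and form a normal family there. You never rule out the scenario ``normal with limit $\infty$'', and backward invariance does not do it. The missing ingredient is the Eremenko--Lyubich argument of Theorem~\ref{thm-escaping-Julia}. Suppose $f_\lambda^n\to\infty$ uniformly on a small disk $D_0$ about $x_0$. Then for large $n$ the real part of $f_\lambda^{n+1}$ is large on $D_0$. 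This confines $f_\lambda^n(D_0)$ to a single strip $\{|\operatorname{Im}z-2k\pi|<\pi/2\}$, on which the next map is injective. Hence, after shrinking $D_0$, the iterates are eventually univalent on $D_0$. Koebe's one-quarter theorem with $|(f_\lambda^n)'(x_0)|\to\infty$ then forces arbitrarily large disks inside such a strip, which is impossible.

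The second, more serious gap is the hyperbolic-expansion step. For your union of strips, or for $U=\{\operatorname{Im}z\neq0\}$ as in the paper, the expansion factor of $f_{\lambda_k}$ is explicit. It equals $|\operatorname{Im}z_j|/|\sin\operatorname{Im}z_j|$ at an orbit point $z_j$, and the parameter does not enter this formula. So there is no uniform gain at the $[A,B]$-steps, and compactness of $[A,B]$ plays no role. The factor tends to $1$ as the orbit approaches $\R$. The real obstruction is therefore orbits drifting toward the real axis, not the $1/e$-blocks. What Schwarz--Pick actually yields is $\prod_n|\operatorname{Im}z_n|/|\sin\operatorname{Im}z_n|<\infty$. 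This gives only $\operatorname{Im}z_n\to0$, and no large disks.

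Your fallback does not repair this. The identity $(f_\lambda^n)'=\prod_k f_\lambda^k$ does not force a limit function to be constant. Moreover, in the non-autonomous setting a subsequential limit $f_\lambda^{n_j}\to w$ says nothing about $f_\lambda^{n_j+1}$, so $w$ need not lie near a fixed point of any $\lambda e^z$.

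The paper closes the argument differently. With $\operatorname{Im}z_n\to0$, the recursion $\operatorname{Re}z_{n+1}=\lambda_{n+1}e^{\operatorname{Re}z_n}\cos\operatorname{Im}z_n$ leaves only two possibilities: $\operatorname{Re}z_n\to1$ or $\operatorname{Re}z_n\to+\infty$. The first is excluded because $\lambda_n\ge A>1/e$ for infinitely many $n$. Hence $z_0$ is an escaping point, and Theorem~\ref{thm-escaping-Julia} is invoked a second time to contradict $z_0\in\mathcal{F}$. So the endgame is an escape argument, not growth of disks.
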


While both results require the sequence to be bounded away from $1/e$ asymptotically, Lech’s theorem allows individual terms to equal $1/e$ as long as infinitely many terms exceed a fixed lower bound $A>1/e$. In this sense, Lech’s statement is a direct generalization of Urbański–Zdunik’s, and their proofs follow similar autonomous-like techniques applicable when the parameters stay eventually above 
$1/e$.

Our first theorem extends the results of Urba\'nski–Zdunik and Lech by allowing subsequences of $(\lambda_n)$ to converge to the critical parameter $1/e$. It only requires the limsup of the sequence to exceed $1/e$, thereby covering the previously unaddressed case where the parameter sequence can accumulate at the critical parameter $1/e$.
\begin{theorem}\label{thm-whole-plane}
Let $\lambda=(\lambda_n)_{n=0}^{\infty}$ be a sequence such that 
$\lambda_n\in [\frac{1}{e}, M]$ for some constant $M>\frac{1}{e}$ and
$\limsup\limits_{n\to\infty}\lambda_n>\frac{1}{e}$,   then we have $$\J=
\C.$$
\end{theorem}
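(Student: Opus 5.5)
We argue by contradiction. Suppose some disc $D=D(z_0,r)$ lies in the Fatou set $\F$. Fix $\delta>0$ with $\limsup\lambda_n>\frac1e+2\delta$, so that $\lambda_n\ge \frac1e+\delta$ for infinitely many $n$, say along $n_1<n_2<\cdots$. All maps have $\lambda_n\ge 1/e$. This gives $f_{\lambda_n}(x)\ge x$ on $\R$, since $\lambda e^x\ge x$ for $\lambda\ge 1/e$. It also gives $|f_{\lambda_n}'(z)|=|f_{\lambda_n}(z)|$. The plan is to obtain expansion along orbits, as in Misiurewicz's argument and the proofs of \cite{UZ21,Lez21}, while only using the good times $n_k$ to push orbits away from the neutral behaviour near $z=1$.

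\textbf{Step 1 (behaviour of $f^n_\lambda$ on $D$).} The maps $f_\lambda^n$ omit $0$ on $D$, so the family is normal. We first rule out limit functions that are constant and finite. Because $|f'_{\lambda_n}(w)|=|f_{\lambda_n}(w)|$ and $f_\lambda^{n+1}=f_{\lambda_{n+1}}\circ f_\lambda^n$, the chain rule gives
\[
|(f^n_\lambda)'(z)|=\prod_{j=1}^{n}|f^j_\lambda(z)|.
\]
The key estimate is therefore that along the orbit of a generic point of $D$ the products $\prod|f^j_\lambda|$ grow. We use the standard consequence of normality: on a smaller disc the derivatives $(f^n_\lambda)'$ are locally bounded whenever the limits are not $\infty$. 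So if the product can be shown to be unbounded for some point, every limit is $\infty$, or the derivatives are large, which contradicts normality via Bloch/Koebe-type area arguments. Concretely, if $|(f^n)'(z)|\to\infty$ while $f^n$ stays bounded on a subsequence, then $f^n(D')$ contains large discs, which eventually meet the region where the next map blows up. The convergence $f^{n}\to\infty$ locally uniformly is excluded separately: along the imaginary-direction strips, the image $f^n(D')$ must contain a disc of radius $>\pi$ once the derivative is large. Then $f_{\lambda_{n+1}}$ maps it onto a set containing an annulus around $0$, which contains points of bounded modulus.

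\textbf{Step 2 (expansion from good times).} We need a uniform statement: there exist $K>1$ and $N$ such that any orbit segment whose points stay in a bounded region, and which contains a good time $n_k$, gains a factor $K$ in $\prod|f^j|$ compared with the worst case. For real orbits this is clear. For $\lambda=1/e$ the only non-expanding phenomenon is the parabolic point at $1$, where $|f'|=1$. For $\lambda\ge\frac1e+\delta$, the map has $\lambda e^x-x\ge c(\delta)>0$, so the real orbit moves past $1$ by a definite amount, after which it grows superexponentially. For complex points we use the hyperbolic metric of $\C\setminus P$, where $P$ is a forward-invariant set containing the singular orbit $\{0,\lambda_1,\dots\}\subset[0,\infty)$ for every sequence. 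Each $f_{\lambda_n}$ is a covering of $\C\setminus\{0\}$, so it expands the hyperbolic metric of $\C\setminus[0,\infty)$ pulled back appropriately. The expansion is uniform away from a neighbourhood of $[0,\infty)$, and the good times give a uniform expansion factor near the parabolic point.

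\textbf{Step 3 (conclusion).} Combining these, the hyperbolic derivative of $f^n_\lambda$ at a point of $D$ tends to $\infty$ along $n_k$, unless the orbit accumulates on the postsingular line $[0,\infty)$. In the latter case, normality plus the area argument from Step 1 forces $f^n(D)$ to contain a definite neighbourhood of a real point, and the next images cover a neighbourhood of $\infty$ in a left half-plane and then a neighbourhood of $0$. This contradicts normality, since limits would have to be both $\infty$ and finite. In the former case, non-normality follows directly from Marty's criterion.

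The main obstacle is Step 2, specifically uniformity near the parabolic point $z=1$ when $\lambda_n$ may converge to $1/e$ along most times. Orbits can spend arbitrarily long stretches near $1$ with almost no expansion. I would handle this by showing that the time spent near $1$ is irrelevant: while the orbit stays near $1$ the derivative product is at least about $1$. Each good time then provides a definite push whose effect on subsequent escape is uniform in $\delta$, and there are infinitely many good times.
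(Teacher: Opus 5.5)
\paragraph{Main gap: the real axis is never shown to lie in the Julia set.}
This is exactly where $\limsup\lambda_n>1/e$ has to enter. It is elementary, and your own observation gives half of it. You have $f_{\lambda_n}(x)\ge x$ on $\R$, with a gain of at least $\log(1+e\delta)$ at every good time, so $f^n_\lambda(x)\to+\infty$ for every real $x$, and hence $\R+2\pi i\mathbb{Z}\subset\mathcal{I}$. The other half is that escaping points are Julia points. That is Theorem \ref{thm-escaping-Julia}; for $\exp$, your identity $|(f^n_\lambda)'|=\prod_j|f^j_\lambda|$ plus Bloch's theorem, as in your Step 1, would also do. Only after this do the images $W_n=f^n_\lambda(D)$ of a Fatou disc avoid $Y=\bigcup_k(\R+k\pi i)$. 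Then each $f_{\lambda_{n+1}}$ maps $W_n$ conformally into a half-plane and Schwarz--Pick applies.

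Without this step, neither case of your Step 3 works.
\begin{itemize}
\item The expanding case needs $f^n_\lambda(D)\subset\C\setminus[0,\infty)$ for all $n$ in order to bound the hyperbolic derivative. Also, a large derivative for the hyperbolic metric of $\C\setminus[0,\infty)$, which blows up along $[0,\infty)$, does not give a large spherical derivative. So Marty's criterion is not directly applicable.
\item In the case where the orbit accumulates on $[0,\infty)$, nothing forces $f^n_\lambda(D)$ to contain a neighbourhood of a real point; once $[0,\infty)\subset\mathcal{J}$ is known, it never does. Even if it did, the contradiction you describe amounts to saying that real points are Julia points, which is what needs proof.
\end{itemize}
This second case is where the real work lies. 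In the paper, Schwarz--Pick gives $\prod_n|\Im z_n|/|\sin\Im z_n|<\infty$, hence $\Im z_n\to0$. A real-variable argument applied to $\Re z_{n+1}=\lambda_{n+1}e^{\Re z_n}\cos\Im z_n$ then gives $\Re z_n\to\infty$. So the Fatou point would escape, contradicting $\mathcal{I}\subset\mathcal{J}$.

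\paragraph{Step 2 targets the wrong difficulty and is not established.}
The expansion factor of $f_\lambda$ for the hyperbolic metric of $\Omega=\C\setminus[0,\infty)$ is $\rho_{\Omega'}(z)/\rho_\Omega(z)$, where $\Omega'=\C\setminus(\R+2\pi i\mathbb{Z})$. This does not depend on $\lambda$ at all, so the good times cannot improve it. It tends to $1$ as $z$ approaches any point of $[0,\infty)$, not specifically near $z=1$. The parabolic point only matters for the one-dimensional question of whether (asymptotically) real orbits escape. The argument above settles that with no uniform ``push'' estimate.

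Your heuristic that the derivative product is ``at least about $1$'' near $1$ is also false. One has $|f_{1/e}'(z)|=e^{\Re z-1}<1$ when $\Re z<1$. Along real orbits $x_j\uparrow1$ in the parabolic basin of $f_{1/e}$, the product $\prod_j x_j$ tends to $0$.

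\paragraph{A false opening claim.}
Omitting $0$ does not make $\{f^n_\lambda\}$ normal on $D$. Montel needs two omitted values; otherwise $\mathcal{J}$ would be empty. Normality on $D$ comes only from the contradiction hypothesis.
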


Given a sequence  $\lambda=(\lambda_{n})_{n=1}^{\infty}$ and $z\in \mathbb{C}$, we define
\begin{equation*}\label{eq-omega}
\omega_{\lambda}(z):=\{\text{all possible  accumulating points of} \
\fz\ \text{in} \ \hat{\mathbb{C}}\}.
\end{equation*}
The second line of research concerns the measurable dynamics of non‑autonomous exponential iterations. 
After the work independently obtained by Rees \cite{Rees86} and Lyubich \cite{Lyu87} for dynamics of the exponential map $e^z$, Urbański and Zdunik established the following random analogue, which can be seen as resonance of their works (see \cite{UZ07, UZ03}).
\begin{mainthm}[\cite{UZ21}, Corollary 6]
Let $\mu$ be a probability measure on $[A, B]^{\mathbb{N}}$, where $1/e<A<B<\infty$. Then for $\mu$-almost all sequence $\lambda=(\lambda_n)_{n=1}^{\infty}$, there exists a Borel set $Q_{\lambda}$ with full $2$-dimensional Lebesgue measure such that for any $z\in Q_{\lambda}$, the set $\omega_{\lambda}(z)$ is contained in $[0, +\infty]\cup\{-\infty\}$ and contains $+\infty$.
\end{mainthm}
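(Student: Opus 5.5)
The plan is to prove the statement for \emph{every} sequence $\lambda\in[A,B]^{\N}$, which yields the $\mu$-almost-sure version immediately. The argument is a uniform version of the Rees--Lyubich Lebesgue-density argument. The first step is to identify the post-singular set. Each $f_{\lambda_n}$ has the single asymptotic value $0$, and every finite orbit $f_{\sigma^k\lambda}^m(0)$ is real and positive. Since $A>1/e$, we have $\lambda e^x-x\ge \epsilon_0>0$ for all $x\in\R$ and all $\lambda\in[A,B]$. Hence all real orbits tend to $+\infty$ uniformly on compact sets, and the post-singular set of every shifted sequence $\sigma^k\lambda$ is contained in $P=[0,+\infty)$.

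Fix $\delta>0$ and $R>0$, and set $K=K_{\delta,R}=\{w:\ \mathrm{dist}(w,P)\ge\delta,\ \mathrm{Re}\,w\ge -R\}$. Let $B_K$ be the set of $z$ with $f^n_\lambda(z)\in K$ for infinitely many $n$. The claim is that $\omega_\lambda(z)\subset[0,+\infty]\cup\{-\infty\}$ whenever $z\notin\bigcup_{\delta,R}B_{K_{\delta,R}}$, where the union is taken over a countable family of $\delta,R$. Indeed, any other accumulation point is either a finite point off $[0,\infty)$, or it is $\infty$ approached through points with bounded-below real part and large imaginary part. In both cases the orbit lies in some $K_{\delta,R}$ infinitely often. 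For a finite point this holds with $\delta,R$ small and large respectively. For $\infty$ approached along large imaginary parts, I will use that $\{\mathrm{Re}\, w\ge -R\}$ together with the $\delta$-separation from $P$ covers such points. Points escaping with $\mathrm{Re}\to-\infty$ give the accumulation point $-\infty$ only.

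The core step is to show that $B_K$ has measure zero. Take $z\in B_K$ and a time $n$ with $w_n=f^n_\lambda(z)\in K$. The disk $D(w_n,\delta)$ lies in $\C\setminus P$, which contains no post-singular values of $\lambda$. So the branch of $(f_\lambda^n)^{-1}$ sending $w_n$ to $z$ extends univalently to $D(w_n,\delta)$. By Koebe's distortion theorem, on $D(w_n,\delta/2)$ its pullback $U_n\ni z$ is a topological disk of roughly round shape with bounded distortion. Moreover $\mathrm{diam}\,U_n\to0$, because $|(f^n_\lambda)'(z)|\to\infty$ along such times; this follows from $|f_{\lambda_k}'(u)|=|f_{\lambda_k}(u)|$ together with the fact that the orbit cannot stay in the unit disk. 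I then need a constant $c=c(\delta,R,A,B)>0$ such that the set $E_n\subset D(w_n,\delta/2)$ of points whose future orbit (under $\sigma^n\lambda$) never again enters $K$ has $|E_n|\ge c$, uniformly in $n$ and in the tail sequence.

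To construct $E_n$, I will find inside $D(w_n,\delta/2)$ a small square that some iterate $f^{j}$, with $j$ bounded in terms of $\delta,R$, maps over a large region $Q$ far into $\{\mathrm{Re}\ge T\}$ near the real axis. For the points of $Q$ whose orbits stay in thin sectors around $\R_+$ and escape to $+\infty$, I will use the explicit estimate $|\mathrm{Im} f(w)|\approx |f(w)|\,|\mathrm{Im}\,w|$. This provides a positive-measure set of such points, and I will transfer it back with bounded distortion. Compactness of $[A,B]$ makes all constants uniform.

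With $|E_n|\ge c$ in hand, Koebe gives $|U_n\setminus B_K|\ge c'|U_n|$ for a sequence of shrinking near-round neighbourhoods of $z$. Hence $z$ is not a Lebesgue density point of $B_K$, and so $|B_K|=0$.

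It remains to show that $+\infty\in\omega_\lambda(z)$ for almost every $z$. If $z$ avoids every $B_K$ and its orbit stays bounded, then it eventually stays in a small neighbourhood of some $[0,C]$. But there $\mathrm{Re}\, f_{\lambda_k}(u)\ge \mathrm{Re}\,u+\epsilon_0/2$, which pushes the orbit past $C$, a contradiction. If instead the orbit accumulates only at $-\infty$, its next iterate lies near $0$, and the previous argument applies. The main obstacle I expect is the uniform lower bound $|E_n|\ge c$. The escaping set near $\R_+$ is thin, so bounding its measure from below in a fixed disk, uniformly over all tail sequences, requires a careful hyperbolic estimate along the real axis. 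I would try first to obtain it through a nested-square construction in the style of Lyubich, with explicit contraction of inverse branches near $\R_+$.
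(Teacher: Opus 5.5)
Your core step fails for a structural reason, not for lack of an estimate. With your choice of $K_{\delta,R}$, the set $B_{K_{\delta,R}}$ has \emph{full} measure, so the bound $|E_n|\ge c$ that you call the main obstacle is impossible.

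\emph{Why the orbits you want are rare.} Take $\delta$ small, $R$ with $Be^{-R}<\delta$, and let $N_\delta(P)$ be the $\delta$-neighbourhood of $P=[0,\infty)$.
\begin{itemize}
\item Points with $\Re w<-R$ are mapped into $D(0,Be^{-R})\subset N_\delta(P)$.
\item A point of $N_\delta(P)$ has $|\Im w|<\delta$. Its image therefore has argument in $(-\delta,\delta)$ and positive real part, so it never lies in $\{\Re<-R\}$.
\end{itemize}
Hence an orbit that eventually avoids $K_{\delta,R}$ eventually stays in $N_\delta(P)$ forever. Its real part then increases by at least $1+\log(A\cos\delta)>0$ per step. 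Your $E_n$ consists exactly of such points. Orbits confined to sectors around $\mathbb{R}_+$ either enter $K_{\delta,R}$ or are of this type.

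\emph{Why this set is null.} Discarding finitely many steps (preimages of null sets are null), all orbit points $w_i$ have large modulus. The inverse branches along the orbit are then compositions of logarithm branches on disks avoiding $0$, hence univalent on $D(w_j,1)$. Inside $D(w_j,1)$, the points whose image is still $\delta$-close to $P$ satisfy $|\sin\Im w|<\delta e^{1-\Re w_j}/A$. They lie in a horizontal strip of width $O(\delta e^{-\Re w_j})\to 0$. By Koebe's distortion theorem, no point of the set is a Lebesgue density point, so it has zero area.

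\emph{Source of the error.} You misread the conclusion. The statement imposes nothing on imaginary parts near infinity: $\pm\infty$ only record $\Re\to\pm\infty$, and in the paper's convention $\omega_\lambda(z)\subset\hat{\mathbb{C}}$ they are the single point $\infty$. Typical orbits do pass infinitely often through points of huge real part and huge imaginary part. What has to be excluded is only accumulation at \emph{finite} points off the post-singular set.

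\emph{What the paper does instead.} The paper only quotes this statement from \cite{UZ21}. It proves the deterministic inclusion $\omega_\lambda(z)\subset\mathcal{B}_\lambda$ of Theorem~\ref{thm-main} by the density-point argument you have in mind, but with different bad and good sets.
\begin{itemize}
\item \emph{Bad set.} The density argument is run only at times $n_k$ with $|f^{n_k}_\lambda(z_0)|\le M$ and $\mathrm{dist}(f^{n_k}_\lambda(z_0),\mathcal{B}_\lambda)>2\delta$.
\item \emph{Shrinking of pull-backs.} This comes from Lemma~\ref{lem-bcl}, i.e.\ from $\mathcal{J}=\mathbb{C}$ together with Lemma~\ref{lemma-uniform} (the consequence of the blow-up Lemma~\ref{lem-expand}). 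Your derivative argument does not work as stated: $|(f^n_\lambda)'(z)|=\prod_{k\le n}|f^k_\lambda(z)|$ can stay small along orbits that pass close to $0$, which typical orbits do.
\item \emph{Good set.} The uniform positive area is Theorem~\ref{thm-key}. Its good points come from nested side-$2$ squares in $\{|\Re z|>M_s\}$ with \emph{arbitrary} imaginary parts, where $M_{s+1}=e^{M_s/2}$. At each stage, type-$0$ and grid-loss points are removed (Propositions~\ref{pro-type0} and \ref{pro-grid}) at relative cost $O(1/M_s)$. Lemma~\ref{lem-Ree} carries squares in $\{\Re z<-M_s\}$ along the orbit of $0$ back into the right half-plane.
\end{itemize}
These good points visit your $K_{\delta,R}$ infinitely often.

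\emph{What survives.} Once $\omega_\lambda(z)\subset\mathcal{B}_\lambda\subset[0,\infty)\cup\{\infty\}$ is known, your final argument is essentially sound. Real parts increase by $\epsilon_0/2$ near $[0,C]$, and an orbit with bounded-above real part keeps returning to bounded regions. Together these give $\limsup_n\Re f^n_\lambda(z)=+\infty$.
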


Motivated by the fact that random dynamical systems lie naturally between autonomous and non‑autonomous iterations, offering a unified framework for extending classical results. 
we remove the randomness assumption and prove a purely deterministic non‑autonomous counterpart.

Our second main result is the following.

\begin{theorem}\label{thm-main}
Let $\lambda=(\lambda_n)_{n=1}^{\infty} $ be a bounded sequence in $[A, B]^{\mathbb{N}}$, where $\frac{1}{e}<A<B<\infty$, then for almost all $z$ in $\mathbb{C}$, 
$$
\omega_{\lambda}(z)\subset\mathcal{B}_{\lambda}, 
$$
where 
\begin{equation}\label{eq-B}
\mathcal{B}_{\lambda}=\overline{\bigcup_{m=0}^{\infty}
\bigcup_{k=1}^{\infty}\left\{f_{\sigma^m \lambda}^{k}(0)\right\}}\bigcup \{0, \infty\}.
\end{equation}
\end{theorem}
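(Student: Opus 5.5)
The plan is to run a uniform non-autonomous version of the Rees--Lyubich pullback argument. Fix $\lambda\in[A,B]^{\N}$ and argue by contradiction. Suppose the set
\[
E=\{z:\omega_\lambda(z)\not\subset\mathcal{B}_\lambda\}
\]
has positive Lebesgue measure. Since $\mathcal{B}_\lambda$ is closed in $\hat{\C}$ and contains $\infty$, every $z\in E$ has an accumulation point $w\in\C\setminus\mathcal{B}_\lambda$. Using a countable cover, we can fix a round disc $D=D(w_0,4r)$ with $\overline{D}\cap\mathcal{B}_\lambda=\emptyset$ and a positive-measure set $E'\subset E$ such that for each $z\in E'$ one has $f^n_\lambda(z)\in D(w_0,r)$ for infinitely many $n$. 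Because $\mathcal{B}_\lambda$ is closed, $D$ stays at definite distance from the whole post-singular set of every shifted sequence $\sigma^m\lambda$. The only singular value of $f_{\lambda_k}$ is $0$, and $\mathcal{B}_\lambda\supset\{f^k_{\sigma^m\lambda}(0)\}$ for all $m$.

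\textbf{Step 1: univalent pullbacks.} For each $n$ with $f^n_\lambda(z)\in D(w_0,r)$, consider the branch of $(f^n_\lambda)^{-1}$ along the orbit of $z$. Its $m$-th stage is a branch of $f_{\lambda_{m+1}}^{-1}\circ\dots\circ f_{\lambda_n}^{-1}$. Since $D$ avoids all forward images $f^{k}_{\sigma^m\lambda}(0)$ and $0$, these inverse branches extend univalently to $D(w_0,2r)$, call the resulting map $g_n$. Koebe distortion then gives uniformly bounded distortion of $g_n$ on $D(w_0,r)$. This yields a pullback $U_n(z)=g_n(D(w_0,r))\ni z$, a quasi-disc of bounded shape.

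\textbf{Step 2: diameters shrink.} We need $\operatorname{diam}U_n(z)\to 0$, so that the $U_n$ form a Vitali-type family at $z$. Here I would use Theorem~\ref{thm-whole-plane}, which gives $\mathcal{J}(\{f^n_{\sigma^m\lambda}\})=\C$ for every shift. If $|(f^n_\lambda)'(z)|$ stayed bounded along the subsequence, then bounded distortion would force a definite disc around $z$ on which the iterates are normal, which is a contradiction. This step needs uniformity, and I would obtain it from compactness of $[A,B]^{\N}$ in the product topology together with Montel's theorem applied to the pullback families.

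\textbf{Step 3: a definite escaping fraction.} This is the main obstacle. I must show that in $D(w_0,r)$ a fixed proportion $\delta>0$ of the area consists of points whose orbits under $\sigma^n\lambda$ never return to $D(w_0,r)$ after some bounded time, or more simply points that accumulate only on $\mathcal{B}_{\sigma^n\lambda}$, with $\delta$ independent of $n$. In the autonomous case this comes from the half-plane $\{\operatorname{Re} z<-K\}$, which maps into a tiny disc around $0$, and from the sets with $\operatorname{Re} z>K$ that escape. I would try to show uniformly in $\mu\in[A,B]^{\N}$ that the set $W_\mu\subset D(w_0,r)$ of points whose orbits eventually stay in a small neighbourhood of $\mathcal{B}_\mu$ has density at least $\delta$. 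To do this, push $D(w_0,r)$ forward along $\mu$ until it has large diameter, which happens by Step 2 applied forwards and the expansion $|f'|=|f|$ on $\{\operatorname{Re} z>0\}$ for $\lambda>1/e$. Once the image is large, it contains, with a definite area proportion, a part of a left half-plane, which is sent into a small disc near $0\in\mathcal{B}$. Follow-up orbits of points near $0$ then shadow $f^k_{\sigma^m\mu}(0)$, which lie in $\mathcal{B}_\mu$. This part needs care because the orbit of $0$ tends to $+\infty$ when $\lambda>1/e$. The fact that $[0,+\infty]$ is contained in $\mathcal{B}$ helps, since shadowing points stay near the real axis. Compactness of $[A,B]^{\N}$ is again used to make $\delta$ uniform.

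\textbf{Step 4: conclusion.} Pulling $W_{\sigma^n\lambda}$ back by $g_n$ with bounded distortion, we find that each $U_n(z)$ contains a proportion $\ge c\delta$ of points not in $E'$. Note that $\mathcal{B}_{\sigma^n\lambda}\subset\mathcal{B}_\lambda$, so these points have $\omega$-limit sets inside $\mathcal{B}_\lambda$. The shrinking bounded-shape neighbourhoods $U_n(z)$ then contradict the Lebesgue density theorem at a density point $z$ of $E'$. Hence $|E|=0$.
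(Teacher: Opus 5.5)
\textbf{Steps 1, 2 and 4 match the paper, but Step 3 has a genuine gap.}

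Steps 1, 2 and 4 are the reduction the paper uses to derive Theorem~\ref{thm-main} from Theorem~\ref{thm-key}. It consists of univalent pullbacks of a disc staying at distance $>2\delta$ from $\mathcal{B}_\lambda$, shrinking of those pullbacks (Lemma~\ref{lem-bcl}), and a density-point contradiction using $\mathcal{B}_{\sigma^n\lambda}\subset\mathcal{B}_\lambda$. One correction in Step 2: a disc mapped into a bounded set along a subsequence does not by itself contradict $\mathcal{J}=\mathbb{C}$. The contradiction comes from the blow-up property of Lemma~\ref{lem-expand}, namely $f^n_\lambda(U)\supset K$ for \emph{all} large $n$.

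The gap is Step 3. It carries essentially the whole difficulty of the theorem; in the paper it is Theorem~\ref{thm-key} together with Sections~5--6. Your sketch does not establish it, for three reasons.

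\textbf{(i) A false claim.} The claim $[0,+\infty]\subset\mathcal{B}$ is wrong. $\mathcal{B}_\lambda$ is the closure of the post-singular orbits, not the Urba\'nski--Zdunik set $[0,+\infty]\cup\{-\infty\}$. For $\lambda\equiv 1$ it is the discrete set $\{0,1,e,e^e,\dots,\infty\}$. So ``shadowing points stay near the real axis'' gives nothing. You need the orbit to shadow $f^i_{\sigma^{k+1}\lambda}(0)$ with errors tending to $0$, and then to leave with real part tending to $\infty$. Lemma~\ref{lem-Ree} provides exactly this:
\begin{itemize}
\item $m$ is chosen maximal with $e^{-\alpha+2}\prod_{i<m}f^i_{\sigma^{k+1}\lambda}(0)\le 1/(BeK)$;
\item the shadowing error is $<e^{-\alpha/2}$ for $i\le m-2$;
\item the orbit then exits into $\{\Re z>e^{\alpha/2}\}$ on a piece of definite diameter with bounded distortion.
\end{itemize}

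\textbf{(ii) One cycle is not enough.} The cycle ``left half-plane $\to$ near $0$ $\to$ shadow'' does not control $\omega_\mu(z)$. After shadowing, the orbit is back in the far right half-plane. Its next image again meets the strips $\Im z\approx\pi/2+n\pi$, where $|\Re f_\mu|$ is small, and it meets neighbourhoods of post-singular points. So a fraction is lost at every cycle, and these losses must be summable. The paper runs an infinite nested construction of side-$2$ squares in $\{|\Re z|>M_s\}$ with $M_{s+1}=e^{M_s/2}$. Propositions~\ref{pro-type0} and~\ref{pro-grid} bound the relative loss at each stage by $O(e^{-s}+e^{(c_1-1)s})$, which gives $\operatorname{Area}(X_\infty)\ge\prod_s(1-2c/M_s)\operatorname{Area}(X_0)>0$.

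\textbf{(iii) Compactness cannot give a uniform $\delta$.} The set of points with $\omega_\mu(z)\subset\mathcal{B}_\mu$ depends on the entire tail of $\mu$. Its area therefore has no semicontinuity in the product topology. Compactness only handles finite-time quantities, such as the bound in Lemma~\ref{lemma-uniform}. Uniformity has to come from estimates that are explicitly independent of the parameters, which is what the distortion and area bounds of Sections~5--6 supply.

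\textbf{How to connect this to Step 4.} Rather than ``pushing $D(w_0,r)$ forward until it is large'', transport a fixed reference domain. Lemma~\ref{lem-expand} gives a uniform $N$ with $f^N_{\sigma^{n}\lambda}(D(x_0,\delta/2))\supset D$. The derivative bound used in Lemma~\ref{lemma-uniform} then converts the uniform area bound on $D_{\sigma^{n+N}\lambda}$ into a definite proportion of $D(x_0,\delta/2)$.
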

Using the invariance $\omega_{\sigma^k\lambda}(f_{\lambda}^k(z))=\omega_{\lambda}(z)$, we can strengthen Theorem \ref{thm-main} as follows.
\begin{corollary}\label{cor-main}
For almost all $z$ in $\mathbb{C}$, 
\begin{equation*}
\omega_{\lambda}(z)\subset \bigcap_{k=0}^{\infty}\mathcal{B}_{\sigma^k \lambda}. 
\end{equation*}
\end{corollary}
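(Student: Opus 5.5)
The corollary follows from Theorem~\ref{thm-main} applied to every shifted sequence $\sigma^k\lambda$, combined with two facts: the shift invariance of $\omega$ and the fact that each $f_\lambda^k$ sends Lebesgue-null sets to null sets (pullbacks are handled by local inverse branches).

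\textbf{Step 1: shift invariance.} We check that $\omega_{\sigma^k\lambda}(f_{\lambda}^k(z))=\omega_{\lambda}(z)$. For $n\ge 1$ we have $f_{\sigma^k\lambda}^{n}(f_\lambda^k(z))=f_{\lambda}^{n+k}(z)$. So the orbit of $f_\lambda^k(z)$ under the shifted sequence is the tail of the orbit of $z$, and dropping finitely many terms does not change the set of accumulation points.

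\textbf{Step 2: the exceptional sets.} For each $k\ge 0$, the sequence $\sigma^k\lambda$ lies in $[A,B]^{\N}$. Theorem~\ref{thm-main} therefore gives a Lebesgue-null set $N_k\subset\C$ such that $\omega_{\sigma^k\lambda}(w)\subset\mathcal B_{\sigma^k\lambda}$ for all $w\notin N_k$. Set
\[
E_k:=(f_\lambda^k)^{-1}(N_k),
\]
with $E_0=N_0$.

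The claim is that $E_k$ is null. The map $f_\lambda^k$ is a composition of the maps $z\mapsto\lambda_j e^z$. Each of these is a nonconstant entire function with nonvanishing derivative, hence locally biholomorphic. Cover $\C$ by countably many discs on each of which $f_\lambda^k$ is injective. On each such disc $D$, the set $E_k\cap D$ is the image of $N_k\cap f_\lambda^k(D)$ under a holomorphic inverse branch. Holomorphic (hence locally Lipschitz) maps send null sets to null sets, so $E_k\cap D$ is null, and a countable union gives that $E_k$ is null. In particular the preimages, which are infinite-to-one, cause no difficulty, since the cover is countable.

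\textbf{Step 3: conclusion.} Let $E=\bigcup_{k\ge0}E_k$, a null set. For $z\notin E$ and every $k$, we have $f_\lambda^k(z)\notin N_k$. Step 1 then gives
\[
\omega_\lambda(z)=\omega_{\sigma^k\lambda}(f_\lambda^k(z))\subset\mathcal B_{\sigma^k\lambda}.
\]
Intersecting over $k$ gives the corollary.

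The only step needing any care is the null-set pullback in Step 2, and it is routine because all the maps involved are locally biholomorphic.
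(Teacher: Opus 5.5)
Your proposal is correct and follows the paper's proof essentially verbatim: apply Theorem~\ref{thm-main} to each $\sigma^k\lambda$, pull the exceptional null sets back under $f_\lambda^k$, use $\omega_{\sigma^k\lambda}(f_\lambda^k(z))=\omega_\lambda(z)$, and intersect over $k$. Your justification of the null-set pullback, via countably many injective pieces and holomorphic inverse branches, is more careful than the paper's one-line appeal to analytic maps preserving null sets, since that statement literally concerns images rather than preimages.
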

\begin{proof}
For any $k\in \mathbb{N}$, there exists a set $E_k\in \mathbb{C}$ of full Lebesgue measure such that
$$
\omega_{\sigma^k\lambda}(w)\subset \mathcal{B}_{\sigma^k\lambda}\quad\text{for every}\ w\in E_{k}.
$$

Since any non‑constant analytic function preserves sets of zero Lebesgue measure, the pre-image $B_k:=(f_{\lambda}^{k})^{-1}(E_k)$ also has full measure in $\C$.
Thus, for any $z\in B_k$,
$$
\omega_{\sigma^k\lambda}(f_{\lambda}^{k}(z))\subset \mathcal{B}_{\sigma^k\lambda}
$$
Recalling that $\omega_{\sigma^k\lambda}(f_{\lambda}^k(z))=\omega_{\lambda}(z)$, we obtain that 
$$
\omega_{\lambda}(z)\subset \mathcal{B}_{\sigma^k\lambda},\quad\text{for all}\ z\in B_k.
$$
Finally, taking the intersection over all $k\in\N$ gives
\[
\omega_{\lambda}(z)\subset \bigcap_{k=1}^{\infty}\mathcal{B}_{\sigma^k\lambda}
\]
for all $z\in \displaystyle\bigcap_{k=1}^{\infty}B_k$, where the intersection set $ \displaystyle\bigcap_{k=1}^{\infty}B_k$ still has full Lebesgue measure.

\end{proof}

\begin{remark}
The condition $\frac{1}{e}<A<B<\infty$ can, in a certain sense, be relaxed. The method employed in this paper remains valid under the more general assumption that $\lambda_n\in [\frac{1}{e}, \infty] \ (n\in \mathbb{N})$
 for all $n\in \mathbb{N}$, provided that
\begin{equation}\label{eq-refine}
\lim_{j\rightarrow\infty}\inf_{k}f_{\sigma^k\lambda}^{j}(0)=+\infty.
\end{equation}
One may note that condition \eqref{eq-refine} is equivalent to
\begin{equation*}
\lim_{j\rightarrow\infty}\inf_{k}f_{\sigma^k\lambda}^{j}(0)>1.
\end{equation*}
It is therefore natural to investigate whether the non-autonomous analog of Rees and Lyubich’s theorem remains valid under the condition on the sequence $\lambda$ described in Theorem \ref{thm-whole-plane} or Theorem \ref{thm-Lez}.
\end{remark}


\textbf{Structure of the article.} In Section \ref{sec:Julia}, we show that for non‑autonomous iterates of functions from the Eremenko–Lyubich class, every escaping point is a Julia point, and use this fact to prove Theorem \ref{thm-whole-plane}.

Section \ref{sec:covering} presents several covering properties of non‑autonomous iterations of exponential maps, including a blow‑up property of the Julia set and a contraction property of backward orbits. These results play essential roles in the proof of our main theorem (Theorem \ref{thm-main}).
Section \ref{sec:orbit} is devoted to a key sufficient condition (Theorem \ref{thm-key}) for Theorem \ref{thm-main}.

Follow the similar ideas from the work of Rees \cite{Rees86} and Lyubich \cite{Lyu87},
Section \ref{sec:loss} establishes a geometric framework for quantitatively estimating the area of “non-typical” point sets, specifically type 0 points (Proposition \ref{pro-type0}) and $\epsilon$-grid loss points (Proposition \ref{pro-grid}), in non‑autonomous exponential iterations. These results provide essential tools for studying the typical orbits in Julia sets.  
All uniform estimates for the area of non‑typical points, exhibiting exponential decay in the real‑part parameter, are independent of the choice of the parameter. 
This essentially depends on the strong expansion along orbits in the right half‑plane with large real parts. In our setting, 
such uniform estimates are crucial and required significantly more effort than in previous works, 
particularly in terms of the corresponding distortion estimates and the combinatorial description of possible non‑typical points.

Finally, some elementary properties of exponential iterations at the origin are presented in the appendix Section.

\textbf{Some notations.} Let $\C$ be the complex plane and Z the set of integers. The real
and imaginary parts of $z\in\C$ are denoted by $\Re(z)$ and $\Im(z)$, respectively. 
We shall use $D(z, r)$ for the disk of radius $r$ centered at $z$, and $\Delta$ for the unit disk $D(0,1)$.
Furthermore, by $S(z, r)$ we mean
the open square of side-length $2r$ centered at $z$: $$S(z, r):=\{z\in\C: |u-\Re z|<r, |y-\Im z|<r\},$$
and $kS(z,r)=S(z,kr),k>0$.

\paragraph{\bf Acknowledgment.}
This work was supported by the Visiting Scholar Program
of Chern Institute of Mathematics at Nankai University. The first author thanks the Chern Institute of Mathematics for its hospitality during his visit.  

\section{Julia sets for non-autonomous exponential maps}\label{sec:Julia}
In this section, we provide a sufficient condition on the sequence $\lambda$ under which the Julia set $\mathcal{J}$ coincides with the entire complex plane. We first examine a property of the escaping set, generalizing the classical autonomous case to the non‑autonomous setting. Building on this property, we then give a complete proof of Theorem \ref{thm-whole-plane}.
\begin{theorem}\label{thm-escaping-Julia}
Let  $f$ be a transcendental entire function in the Eremenko-Lyubich class, and let $\lambda=\{\lambda_n\}_{n=1}^{\infty}$ be a sequence satisfying
 $0<\min\limits_{n}|\lambda_n|\leq \max\limits_{n}|\lambda_n|<\infty$. Consider the non-autonomous iteration system $\{f_{\lambda_n}\}_{n=1}^{\infty}$, where $f_{\lambda_n}=\lambda_nf$. Then, 
$$\mathcal{I}(\{f_{\lambda}^n\}_{n=1}^{\infty})\subset \mathcal{J}(\{f_{\lambda}^n\}_{n=1}^{\infty}),$$ where $$\I:=\{z\in\C: \lim_{n\to\infty}f_{\lambda}^n(z)=\infty\}$$ means the escaping set of the non-autonomous iterations.
\end{theorem}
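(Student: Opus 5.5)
We argue by contradiction, following the classical Eremenko–Lyubich argument adapted to the non-autonomous setting with uniform constants. Suppose $z_0\in\I$ lies in $\F$. Then there is a disk $D=D(z_0,r)$ on which $\{f_\lambda^n\}$ is normal. Since $f_\lambda^n(z_0)\to\infty$, every limit function on the component containing $z_0$ is the constant $\infty$. Indeed, a finite holomorphic limit $g$ would force $f_\lambda^{n_k}(z_0)\to g(z_0)\neq\infty$. Hence $f_\lambda^n\to\infty$ locally uniformly on $D$, so shrinking $D$ we may assume $f_\lambda^n(D)\subset\{|w|>R\}$ for all $n\ge n_0$, with $R$ as large as we like.

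The key tool is logarithmic change of variables. Let $S(f)$ be the bounded singular set, and set $m=\min|\lambda_n|$, $M=\max|\lambda_n|$. The singular values of every $f_{\lambda_n}=\lambda_n f$ lie in $\lambda_n S(f)\subset D(0,MK)$, where $K=\sup_{S(f)}|s|$. Choose $R_0>MK$ with also $R_0>|f(0)|M$. Then on the tract $U=f^{-1}(\{|w|>R_0/m\})$ each map $f_{\lambda_n}:\ f_{\lambda_n}^{-1}(\{|w|>R_0\})\to\{|w|>R_0\}$ is a covering, because $\{|w|>R_0\}$ avoids $\lambda_nS(f)$. Passing to logarithms $w=e^{\zeta}$, we obtain maps $F_n$ defined on the logarithmic tracts $\mathcal{T}_n=\exp^{-1}(f_{\lambda_n}^{-1}\{|w|>R_0\})$, with $\exp F_n=f_{\lambda_n}\circ\exp$, and each $F_n$ maps every component of $\mathcal{T}_n$ conformally onto the half-plane $H=\{\Re\zeta>\log R_0\}$. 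The standard Koebe-$1/4$ argument (Eremenko–Lyubich) applied to the inverse branch $H\to$ tract, together with the fact that the tracts omit a $2\pi i$-periodic family of disks around $\log$ of a point outside the tract (here the set $\mathcal{T}_n$ misses the translates of a fixed disk, uniformly in $n$), gives the expansion estimate
\[
|F_n'(\zeta)|\ \ge\ \frac{1}{4\pi}\bigl(\Re F_n(\zeta)-\log R_0\bigr).
\]
This holds with a constant independent of $n$. The uniformity uses that $\mathcal{T}_n$ is just the $\lambda_n$-rescaled tract, i.e. $\mathcal{T}_n=\exp^{-1}(f^{-1}\{|w|>R_0/|\lambda_n|\})\subset\exp^{-1}(f^{-1}\{|w|>R_0/M\})$, which omits a fixed set.

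Now lift. For $n\ge n_0$, $f_\lambda^n(D)\subset\{|w|>R\}\subset\{|w|>R_0\}$ with $R>R_0e^{8\pi}$ say, and $f_\lambda^{n}(D)$ is simply connected after shrinking $D$. Hence $f_\lambda^{n_0}$ on $D$ admits a logarithm $\phi_0$ with $\exp\phi_0=f_\lambda^{n_0}$, and inductively $\phi_{k+1}=F_{n_0+k+1}\circ\phi_k$ is a well-defined holomorphic lift of $f_\lambda^{n_0+k+1}$ into $H$. Since $\Re\phi_k\ge\log R$ and $\log R-\log R_0\ge 8\pi$, the chain rule gives $|\phi_k'(z_0)|\ge 2^k|\phi_0'(z_0)|\to\infty$. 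Note that $\phi_0'(z_0)\neq0$ after moving $z_0$ slightly in $D$ to avoid critical points of $f_\lambda^{n_0}$; an escaping point still exists nearby since all of $D$ escapes. On the other hand, the $\phi_k$ omit the left half-plane $\{\Re\zeta<\log R_0\}$. Hence the family $\{\phi_k\}$ on $D$ maps into a half-plane and is normal. Moreover $\phi_k(z_0)$ stays comparable to $\Re\phi_k(z_0)$ in the Bloch/Schwarz–Pick sense, i.e.
\[
|\phi_k'(z_0)|\le \frac{2(\Re\phi_k(z_0)-\log R_0)}{r}.
\]
This alone is not a contradiction, so we must compare growth rates. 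Using the expansion estimate in the sharper form $|\phi_{k+1}'(z_0)|\ge \frac{\Re\phi_{k+1}(z_0)-\log R_0}{4\pi}|\phi_k'(z_0)|$, together with $|\phi_k'(z_0)|\ge c\,2^k$, gives
\[
\frac{|\phi_{k+1}'(z_0)|}{\Re\phi_{k+1}(z_0)-\log R_0}\ \ge\ \frac{|\phi_k'(z_0)|}{4\pi}\to\infty,
\]
which contradicts the Schwarz–Pick bound above. Hence $z_0\in\J$.

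The main obstacle is the uniformity of the expansion estimate across the varying maps $\lambda_n f$. This is where the hypothesis $0<m\le M<\infty$ enters: it confines all singular values to one disk and makes all logarithmic tracts sub-tracts of a single fixed one, which omits fixed $2\pi i$-translates of a disk. The Koebe argument then yields an $n$-independent constant.
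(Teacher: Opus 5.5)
Your proof is correct, but it closes the argument differently from the paper.

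\textbf{Common ground.} Both proofs lift to logarithmic coordinates. Both use the Eremenko--Lyubich estimate $|F_n'(\zeta)|\ge \frac{1}{4\pi}(\Re F_n(\zeta)-\log R_0)$ to make the derivatives of the lifted iterates blow up at a point.

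\textbf{The paper's ending.} It applies Koebe's one-quarter theorem to the composite lift $F^n_{\sigma^{k_0}\lambda}$ on a small disk $V$. This requires the composite to be univalent on $V$. It produces a disk of radius larger than $2\pi$ inside a single logarithmic tract, which a tract cannot contain.

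\textbf{Your ending.} You use the estimate in its quantitative form at $\zeta=\phi_k(z_0)$. You pair it with the Schwarz--Pick bound $|\phi_{k+1}'(z_0)|\le 2(\Re\phi_{k+1}(z_0)-\log R_0)/r$ for holomorphic maps $D(z_0,r)\to H$. Together these give $|\phi_k'(z_0)|\le 8\pi/r$ for all $k$, against the growth $|\phi_k'(z_0)|\ge 2^k|\phi_0'(z_0)|$.

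\textbf{What each route buys.} Your route needs no univalence of the composite, and it uses the tract geometry only once, inside the Eremenko--Lyubich lemma. The price is that you need the expansion with the explicit factor $\Re\phi_{k+1}(z_0)-\log R_0$, not just $|(F^n)'|\to\infty$. The paper's route is more geometric and needs only divergence of the derivative. Your set-up is also more explicit than the paper's. Taking $R_0>\max(MK,M|f(0)|)$ gives one base half-plane $H$ for every $n$ and keeps $0$ out of every tract, so all the $F_n$ and the lifts $\phi_k$ are well defined.

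\textbf{Small corrections.}
\begin{itemize}
\item The uniformity of the constant $4\pi$ does not come from the tracts omitting a fixed set. The Koebe argument only uses that each component of $\mathcal{T}_n$ is mapped injectively by $\exp$. Hence each component is disjoint from its $2\pi i$-translates and contains no disk of radius greater than $\pi$. This holds for every $n$ automatically; the hypothesis on $\lambda$ enters only through the choice of a common $R_0$.
\item The claim that $f_\lambda^n(D)$ is simply connected is not needed. The logarithm $\phi_0$ exists because $D$ is simply connected and $f_\lambda^{n_0}$ has no zeros there.
\item After moving $z_0$ off the critical points of $f_\lambda^{n_0}$, apply Schwarz--Pick on a smaller disk centred at the new point.
\end{itemize}
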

\begin{proof}
Choose $R>0$ so that the disk
$D_{R}=\{z:|z|<R\}$ contains all singularities of the family $\{f_{\lambda_n}\}_{n=1}^{\infty}$.
Define $$W_n=f_{\lambda_n}^{-1}(\mathbb{C}\backslash \overline{D_R}),\qquad \mathcal{V}_n=\exp^{-1}(W_n)$$
and let \( F_{\lambda_n} \) be the logarithmic transform of \( f_{\lambda_n} \), i.e.,
\[
\exp(F_{\lambda_n}(z)) = f_{\lambda_n}(\exp(z)),\quad
\exp(F^n_{\lambda}(z)) = f^n_{\lambda}(\exp(z))\ \text{for all}\ n\in\N.
\]
Take any \( z_0 \in \mathcal{I}(\{f_{\lambda}^n\}_{n=1}^{\infty}) \). 
Suppose, for a contradiction, that $z_0\in\F$. Then, there exists a disk $D_0=D(z_0, r), r>0$ in which the iterates $\{f_{\lambda}^n\}_{n=1}^{\infty}$ tend uniformly to $\infty$.

Consequently, there exists \( k_0 > 0 \) such that for every $z\in D_0$,
\[
|f_{\lambda}^{k}(z)| \geq R^2 \quad \text{for all } k \geq k_0.
\]
In particular, for all \( n \in \mathbb{N} \),
\[
|f_{\lambda}^{n+k_0}(z)|=|f_{\sigma^{k_0}\lambda}^{n}(f_{\lambda}^{k_0}(z))| \geq R^2.
\]
Choose \( w \) with \( \exp(w) = z \). Then
\[
\Re \bigl( F_{\lambda}^{n+k_0}(w) \bigr) \geq 2\log R.
\]

A standard estimate (\cite[Lemma 1]{EL92}) gives, for all \( n \geq 0 \),
\[
|F_{\lambda_{k_0+n+1}}'(F_{\lambda}^{k_0+n}(w))| \geq \frac{\Re F_{\lambda}^{k_0+n+1}(w) - \log R}{4\pi} \geq \frac{\log R}{4\pi}.
\]
It follows that
\[
\lim_{n \to \infty} \bigl| (F_{\sigma^{k_0}\lambda}^n)'(F_{\lambda}^{k_0}(w)) \bigr| = \infty.
\]

By Koebe’s one-quarter theorem, this implies
\[
f_{\lambda}^{k_0}(z) \notin \mathcal{F}\bigl( \{f_{\sigma^{k_0}\lambda}^n\}_{n=1}^{\infty} \bigr).
\]
Indeed, if $f_{\lambda}^{k_0}(z)$ were in the Fatou set, there would exist $r_0>0$ such that
$$D(f_{\lambda}^{k_0}(z), r_0)\subset \mathcal{F}\bigl( \{f_{\sigma^{k_0}\lambda}^n\}_{n=1}^{\infty} \bigr).$$
Lifting through the exponential, we could then find a disk $V=D(F_{\lambda}^{k_0}(w), r_{1})$ in $\mathcal{V}_{k_0+1}$ (with $r_1>0$) such that for every $v\in V$,
$$\Re F_{\sigma^{k_0}\lambda}^n(v)\geq 2\log R\quad \text{and}\quad
\lim_{n\rightarrow\infty}
F_{\sigma^{k_0}\lambda}^n(v)=\infty. 
$$

Since $F_{\sigma^{k_0}\lambda}^n$ is conformal on $V$, Koebe’s one‑quarter theorem implies that $V' = F_{\sigma^{k_0}\lambda}^n(V)$ contains a disk of radius at least $\dfrac{1}{4}|(F_{\sigma^{k_0}\lambda}^n)'(F_{\lambda}^{k_0}(w))|
  r_1>2\pi$ for large $n$. However, as $V'\subset \mathcal{V}_{k_0+n+1}$ and each $\mathcal{V}_m$ lies in a strip of height at most $2\pi$, the holomorphic disk $V'$ cannot contain any disk of radius greater than $2\pi$. This is a contradiction.
Thus $$f_{\lambda}^{k_0}(z_0)\in\mathcal{J}\bigl(\{f_{\sigma^{k_0}\lambda}^n\}_{n=1}^{\infty}\bigr).$$ 
On the other hand, it is clear that $$(f_{\lambda}^k)^{-1}(\mathcal{J}(\{f_{\sigma^{k+j}\lambda}^n\}_{n=1}^{\infty}))= 
\mathcal{J}(\{f_{\sigma^j\lambda}^n\}_{n=1}^{\infty})$$ for all $k, j\in\N$, hence $z_0\in\mathcal{J}\bigl(\{f_{\lambda}^n\}_{n=1}^{\infty}\bigr)$.
We have therefore shown that every escaping point belongs to the Julia set, i.e., 
\[
\mathcal{I}\bigl( \{f_{\lambda}^n\}_{n=1}^{\infty} \bigr) \subset \mathcal{J}\bigl( \{f_{\lambda}^n\}_{n=1}^{\infty} \bigr),
\]
as required.

\end{proof}

Now, we are ready to prove Theorem \ref{thm-whole-plane}.
\begin{proof}[Proof of Theorem \ref{thm-whole-plane}]
We first show that $\mathbb{R}\subset \I$.  For any $x\in [0, \infty)$, we have
$$f_{\lambda_n}(x)=\lambda_n \exp(x)\geq 
\exp(x-1)\geq x.$$
Hence, $f_{\lambda_n}(x)\geq x$ and
  $\{f_{\lambda}^n(x)\}_{n=1}^{\infty}$ is an increasing sequence. 
Let $b=\lim\limits_{n\rightarrow\infty}f_{\lambda}^n(x)$; then $b\geq 1$.  We now show that $b=\infty$. 

Assume, to the contrary, that $b<\infty$. 
From $$
\lambda_{n-1}=\frac{f_{\lambda}^n(x)}{\exp(f_{\lambda}^{n-1}(x))},
$$
we obtain
$$
\lim_{n\rightarrow\infty}\lambda_{n}=\frac{b}{\exp b}\leq \frac{1}{e},
$$
which contradicts the hypothesis
$$
\limsup\limits_{n\rightarrow\infty}\lambda_{n}>\frac{1}{e}. 
$$
Hence,
$$
\lim_{n\rightarrow\infty}f_{\lambda}^n(x)=b=\infty,
$$
so
$\mathbb{R}\subseteq \I$.
Indeed, by the periodicity of the exponential map $\exp(z)$, we have 
$$
\mathbb{R}+2k\pi i\subset \I
$$
for any $k\in\mathbb{Z}$. By Theorem \ref{thm-escaping-Julia},
we therefore have
\begin{equation}\label{eqn-R-Julia}
\mathbb{R}+2k\pi i\subset \J,\qquad k\in\mathbb{Z}
\end{equation}
Set
$$
\mathcal{Z}=\bigcup_{k\in \mathbb{Z}}\mathbb{R}+2k\pi i,\ Y=\bigcup_{k\in \mathbb{Z}}\mathbb{R}+k\pi i.
$$ 
Now take $z_0\in \mathcal{F}(\f)$, We shall prove that
 \begin{equation}\label{eq-eventual-limit}
 \lim_{n\rightarrow\infty}\Im z_{n}=0,
 \end{equation}
 where $z_n=\fl^n(z_0)$, and $\Im z_{n}\neq 0$ for all $n\in \mathbb{N}$.
In fact we establish a stronger statement: 
\begin{equation}\label{eq-product}
1\leq \prod_{n=1}^{\infty}\frac{|\Im z_{n}|}{|\sin \Im z_{n}|}<\infty
\end{equation}
which directly implies \eqref{eq-eventual-limit}. 

Let $W$ be a Jordan disk centered at $z_0$ with $W\subset \mathcal{F}(\f)$.  
Define $W_0=W$ and $W_{n}=\fl^n(W)$ for $n\geq 1$, and let $U=\{z: \Im z\neq 0\}$. 
From (\ref{eqn-R-Julia}), we see that each $W_n$ lies in a connected component of $\mathbb{C}\backslash Y$. 
For each $n$, the map $f_{\lambda_n}$ is conformal from $W_{n-1}$ onto $W_{n}$; moreover, it conformally sends each strip 
$$
U_{k}=\{z\in \mathbb{C}: k\pi<\Im z<(k+1)\pi\}
$$ to either the upper or the lower half‑plane, depending on the parity of $k$. 

For a conformal map $f:U\to V$, define its gradient by
$$
D_U^Vf(z)=\lim\limits_{w\rightarrow z}\frac{d_{V}(f(z), f(w)}{d_{U}(z, w)},
$$ where $d_U, d_V$ denote the hyperbolic distances.
One readily checks that
$$
D_{U}^V f(z)=\frac{\rho_{V}(f(z))|f'(z)|}{\rho_{U}(z)}. 
$$
By the Schwarz-Pick lemma, we have
$$
D_{W}^{W_n}f_{\lambda}^{n}(z_0)=1.
$$
Applying the chain rule gives
$$
D_{W}^{W_n}f_{\lambda}^{n}(z_0)=
D_{W}^{U}Id(z_0)\left(\prod_{j=1}^{n}
D_{U}^{U}f_{\lambda_j}(z_{j-1})\right)
D_{U}^{W_n}Id(z_n).
$$
Again by the Schwarz-Pick lemma, we know that 
$$D_{U}^{W_n}Id(z_n)\geq 1.$$
A direct computation yields 
$$
D_{U}^{U}f_{\lambda_j}(z_{j-1})=
\frac{|(f_{\lambda_j})'(z_{j-1})\Im z_{j-1}|}{|\Im f_{\lambda_j}(z_{j-1})|}=
\frac{|\Im z_{j-1}|}{|\sin \Im (z_{j-1})|}.
$$
Combining these estimates together yields inequality \eqref{eq-product}.

Next, we will show that $$\lim_{n\rightarrow\infty}\Re z_n=\infty. $$ 
Recall that 
$$|z_{n+1}|=\lambda_n\exp(\Re z_n)>\frac{1}{e}\exp(\Re z_n),\qquad \Re z_{n+1}=\lambda_n\exp(\Re z_n)\cos(\Im z_n).$$ 
Hence,
$$
\liminf\limits_{n\rightarrow\infty}\Re z_n\geq \frac{1}{e}.
$$

We claim that
$
\liminf\limits_{n\rightarrow\infty}\Re z_n
$ is either $1$ or $\infty$. 

Assume that $\liminf\limits_{n\rightarrow\infty}\Re z_n=b$, where $\frac{1}{e}\leq b<\infty$ and $b\neq 1$. Then, there exists a subsequence $z_{n_k}$ such that
$$\lim_{k\rightarrow\infty}\Re z_{n_k}=\lim_{k\to\infty}\lambda_{n_k-1}\exp \Re z_{n_k-1}\cos(\Im z_{n_k-1})=b.$$ 
This implies that
$$
\lim_{k\rightarrow\infty}\lambda_{n_k-1}\exp \Re z_{n_k-1}=b.
$$
Consequently
$$\liminf_{k\rightarrow\infty}\Re z_{n_{k}-1}\leq \log b+1<b,
$$
contradicting the definition of $b=\liminf\limits_{n\rightarrow\infty}\Re z_n$.  

A similar argument shows that 
$
\limsup\limits_{n\rightarrow\infty}\Re z_n
$ is either $1$ or $\infty$.

Moreover,
$
\limsup\limits_{n\rightarrow\infty}\Re z_n>1
$ 
if and only if
$
\liminf\limits_{n\rightarrow\infty}\Re z_n>1$, which follows from the basic inequality
$$
\frac{\exp(1+b)}{e(1+b)}>1
$$
for any $b\geq 0$.
Thus,
$\lim\limits_{n\rightarrow\infty}\Re z_n$ is either $1$ or $\infty$. However, if 
$\lim\limits_{n\rightarrow\infty}\Re z_n=1$, then from $$\Re z_{n+1}=\lambda_n\exp(\Re z_n)\cos(\Im z_n),$$ we would obtain that $\lim\limits_{n\rightarrow\infty}
\lambda_n=\frac{1}{e}$, contradicting our assumption $\lim\limits_{n\rightarrow\infty}
\lambda_n>\frac{1}{e}$. Therefore,
$$
\lim\limits_{n\rightarrow\infty}\Re z_n=\infty.
$$ 
It follows from (\ref{eq-eventual-limit}) and Theorem \ref{thm-escaping-Julia} that
$$z_0\in\I\subset\J,$$ which is impossible, because $z_0\in\F$.

\end{proof}

\section{Covering properties of the exponential families}\label{sec:covering}

In this section, based on the Ahlfors island theorem, we establish several covering properties for exponential maps, including a blow-up type of non-autonomous Julia sets.

\begin{lemma}[Uniform Expansion Property]\label{lem-expand}
Let $\lambda$ be a sequence in $[A, B]^{\N},\ A>\frac{1}{e}$. Let $U$ be a domain and $K\subset \mathbb{C}\backslash\{0\}$ compact.
Then, there exists $m\in \mathbb{N}$ depending on $U, K$ and $\lambda$ such that
$$
f_{\lambda}^n(U)\supseteq K
$$ 
for all $n\geq m$.
\end{lemma}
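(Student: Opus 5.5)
The plan is to combine Theorem \ref{thm-whole-plane}, which gives $\J=\C$ (here $A>1/e$, so the hypothesis $\limsup\lambda_n>1/e$ holds), with the geometry of the exponential: any sufficiently large region far to the right is mapped by some $f_{\lambda_n}$ over every annulus $\{r\le|w|\le R\}$. The argument has three steps.

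\textbf{Step 1: blow-up to a large disk.} Since $\J=\C$, the family $\{f_\lambda^n\}$ is not normal on $U$. Shrinking $U$ if necessary, we may assume $U$ is a small disk. Because every $f_{\lambda_n}$ omits $0$, every iterate $f_\lambda^n$ ($n\ge1$) omits $0$ on $U$. By Montel's theorem, non-normality then forces the images $f_\lambda^n(U)$ to fail to stay in any fixed region.

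What we actually need is a quantitative statement. I would work with the hyperbolic metric of $\C\setminus\{0\}$, or equivalently lift by $\log$. Let $G_n$ be a branch of $\log f_\lambda^n$ on $U$, so that $G_{n+1}=\log\lambda_{n+1}+f_\lambda^n$ and, locally, $G_{n+1}=\log\lambda_{n+1}+e^{G_n}$. Non-normality of $\{f_\lambda^n\}$ on $U$ implies non-normality of $\{G_n\}$. Non-normality of $\{G_n\}$ in turn means the spherical derivatives are unbounded near some point of $U$. By Zalcman's lemma combined with the Bloch-type property, some $G_n(U)$ therefore contains a disk of radius larger than $\pi$. Consequently $f_\lambda^{n+1}(U)\supseteq\lambda_{n+1}e^{D}$ for a disk $D$ of radius greater than $\pi$, and this image contains a full circle $\{|w|=\rho\}$ together with a round annulus around it.

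\textbf{Step 2: iterate the covering.} An annulus $\{\rho_1<|w|<\rho_2\}$ with $\log(\rho_2/\rho_1)>2\pi$ contains a square $S$ of side greater than $2\pi$. Its image $f_{\lambda_j}(S)=\lambda_j e^{S}$ is then an annulus $\{\lambda_j e^{a}<|w|<\lambda_j e^{b}\}$ with $b-a>2\pi$. To make the annuli grow, I will choose $S$ on the positive real side, with real parts ranging over $[\rho_1,\rho_2]$. The next annulus then has log-modulus at least $\rho_2-\rho_1$, which grows very rapidly. Its inner radius is $\lambda_j e^{\rho_1}$ with $\rho_1$ bounded, so after one step the inner radius stays bounded by $Be^{\rho_1}$. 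I can also take a square whose real part starts at very negative values. This gives inner radius as small as desired while the outer radius is huge, using that $A\le\lambda_j\le B$ is uniform.

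After finitely many steps, the image contains $\{\epsilon<|w|<R\}\supset K$ for the prescribed $\epsilon,R$. Once $f_\lambda^{m}(U)$ contains an annulus $\{\epsilon'<|w|<R'\}$ with $\epsilon'$ tiny and $R'$ huge, the next image contains $\lambda_{m+1}\exp$ of the square $\{|\Re z-\tfrac12\log(\epsilon' R')|<\ldots\}$. This is again such an annulus, with parameters controlled uniformly by $A$ and $B$. Hence the covering persists for all $n\ge m$.

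\textbf{Main obstacle.} The hardest step is Step 1: producing from mere non-normality a single image that contains a disk of radius exceeding $\pi$ in logarithmic coordinates, in the non-autonomous setting. I would first try Zalcman rescaling of $G_n$. The limit is a nonconstant entire function $g$. Because $G_{n+1}=\log\lambda_{n+1}+e^{G_n}$ with $\lambda$ in a compact set, the limit function $g$ cannot be one that omits values in a way that keeps images thin. Then a Bloch or Ahlfors five-islands argument gives univalent islands of any prescribed size in $G_n(U)$ for suitable $n$.

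If that fails, my fallback is the derivative-growth argument used in Theorem \ref{thm-escaping-Julia}. Koebe's theorem applied to the lifts gives a disk of radius greater than $2\pi$ inside some $G_n(U)$, provided the derivative is large somewhere. Large derivative somewhere follows from non-normality via Marty's theorem.
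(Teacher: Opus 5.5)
There is a genuine gap between your Step 1 and Step 2. Step 1, as you set it up, only yields a disk $D$ of radius $>\pi$ somewhere in $f_\lambda^{n}(U)$, with no control on where $D$ sits. This holds whether you use Zalcman plus a Bloch-type property or the Marty fallback; in the fallback you would also need Bloch's theorem rather than Koebe's, since $G_n$ is not univalent on $U$.

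Step 2 then silently assumes the annulus $\lambda_{n+1}e^{D}$ is large in absolute terms. Your claim that an annulus $\{\rho_1<|w|<\rho_2\}$ with $\log(\rho_2/\rho_1)>2\pi$ contains a square of side $>2\pi$ is false for small radii, since the whole annulus has diameter $2\rho_2$. What matters is $\rho_2-\rho_1$, not the log-modulus. If $D$ lies far to the left, then $\lambda_{n+1}e^{D}$ is an annulus of tiny radii and no such square fits. The next map $f_{\lambda_{n+2}}$ sends it into a tiny neighbourhood of $\lambda_{n+2}$, so the annular structure is lost. Your recursion ``new log-modulus $\ge \rho_2-\rho_1$'' then produces collapse rather than growth. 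The same failure occurs for a disk of radius barely above $\pi$ centred near the imaginary axis, whose image is a thin annulus of radius about $\lambda_{n+1}$. So the chain ``disk of radius $>\pi$ $\Rightarrow$ growing annuli $\Rightarrow$ annulus containing $K$'' is not established.

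The missing ingredient is control of the \emph{location} of the covered region, and this is exactly what the paper obtains from Ahlfors' three islands theorem. The paper uses three disjoint rectangles $V_R$, $V_R+4\pi i$, $V_R+8\pi i$, of height $2\pi$ with real parts in $[-\log R-\log B,\log R-\log A]$. Every $f_\mu$ with $\mu\in[A,B]$ maps each of them onto a set containing $A(1/R,R)$, and $V_R\subset A(1/R,R)$. A single island therefore gives $f_\lambda^{n_1+1}(U)\supseteq A(1/R,R)$ at once. The invariance $f_\mu(A(1/R,R))\supseteq A(1/R,R)$, which is what your persistence paragraph is really using, then handles all later $n$, with no iterative growth needed.

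Your Zalcman route can be repaired in the same spirit. The rescaling limit $g$ is a nonconstant entire function. By Hurwitz it omits $0$, because every $f_\lambda^{n}$ does, so $g(\C)=\C\setminus\{0\}$ by Picard. A Rouch\'e-plus-compactness argument over the compact set $A(1/R,R)\subset\C\setminus\{0\}$ then gives $f_\lambda^{n_k}(U)\supseteq A(1/R,R)$ for large $k$. After that, the invariant annulus finishes the proof. This version even avoids the islands theorem, but you must state it that way. Reducing Step 1 to ``a disk of radius $>\pi$ somewhere'' throws away precisely the information the argument needs.
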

\begin{proof}
We first show that there exists $R_0>0$ such that for every $\mu\in [A, B]$ and every $R>R_0$, 
$$
f_{\mu}\left(A\left(\frac{1}{R}, R\right)\right)\supseteq A\left(\frac{1}{R}, R\right)$$ where $A\left(\frac{1}{R}, R\right):=\{z: \frac{1}{R}\leq |z|\le R\}$.

Define two rectangles
$$W_{\mu}:=\{z: -\log R-\log\mu\le\Re z\leq \log R-\log\mu,\ 2\pi\leq \Im z\leq 4\pi\},
$$
$$
V_{R}:=
\{z: -\log R-\log B\leq \Re z\leq \log R-\log A, 2\pi\leq \Im z\leq 4\pi\}.
$$
Then we have
$$
W_{\mu}\subset V_R\quad\text{and}\quad f_{\mu}(W_{\mu})=A\left(\frac{1}{R}, R\right).
$$
For any $z\in V_{R}$,
$$
2\pi\leq |\Im z|\leq |z|\leq |\Im z|+|\Re z|
\leq \log R+|\log A|+|\log B|+4\pi.
$$
Hence, we can choose $R_0>0$ such
 that $$V_{R}\subset A\left(\frac{1}{R}, R\right),\ \text{for all}\ R>R_0.$$  
Consequently, for every $R>R_0$ and every $\mu\in [A, B]$, 
 $$f_{\mu}\left(A\left(\frac{1}{R}, R\right)\right)\supset f_{\mu}(V_{R})\supset
 f_{\mu}(W_{\mu})=A\left(\frac{1}{R}, R\right).$$ 
Now, fix a small disk  $D(z_0, \epsilon)\subset U$ and take $R$ large enough such that $K\subseteq A(\frac{1}{R}, R)$.
Since $\{f_{\lambda}^n\}_{n=1}^{\infty}$ is not normal in $D(z_0, \epsilon)$, the Ahlfors' three islands theorem (\cite{Ahlfors35, Berg00}) yields an integer $n_1$ and a subdomain $U_{n_1}$ of $D(z_0, \epsilon)$ such that 
$f_{\lambda}^{n_1}$ maps $U_{n_1}$ conformally onto one of three disjoint Jordan domains $V_R, V_R+4\pi i, V_R+8\pi i$. 
It follows that $$f_{\lambda}^{n_1+1}(U_{n_1})\supset A\left(\frac{1}{R}, R\right)$$
Hence, for every $k\in \mathbb{N}$, $$f_{\lambda}^{n_1+k}(U_{n_1})\supset A\left(\frac{1}{R}, R\right)\supseteq K.$$
\end{proof}

Using a compactness argument we obtain the following statement.
\begin{lemma}\label{lemma-uniform}
Let $D$ be a bounded Jordan domain and $0\notin \overline{D}$ and let $\frac{1}{e}<A<B$.
For any $\epsilon>0$ and  $R>0$, there exists $m=m(A, B, \epsilon, R)$ such that for every $z\in \overline{D(0, R)}$, every sequence $\lambda=(\lambda_n)_{n=1}^{\infty}\in [A, B]^{\mathbb{N}}$ and every $n\geq m$,
\begin{equation}\label{covering}
f_{\lambda}^{n}(D(z, \epsilon))\supset D.
\end{equation}
Moreover, define for $n\ge m$
$$
V=V_{z, \epsilon, \lambda, n}=\{w\in D(z, \epsilon): f_{\lambda}^{n}(w)\in D\}.
$$
Then, there exists $c=c(R, A, B, D, n, \epsilon)>0$ such that
 $$\mathrm{Area}(V)\ge c.$$
\end{lemma}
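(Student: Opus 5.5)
Let me write $X=[A,B]^{\N}$ with the product topology; it is compact by Tychonoff, and the finite compositions $(\lambda,w)\mapsto f_{\lambda}^n(w)$ are jointly continuous on $X\times\C$. The plan is to make Lemma \ref{lem-expand} uniform over the compact set $\overline{D(0,R)}\times X$ by a finite-subcover argument. Two facts make this work. First, a covering statement produced at some finite time $n_1$ is stable under small perturbations of finitely many parameters and of the centre $z$. Second, once a covering of the annulus $A(1/r,r)$ is reached, it persists for all later times and all parameter tails.

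For the first step I fix $r>R_0$ with $\overline D\subset A(1/r,r)$, where $R_0$ is the constant from the proof of Lemma \ref{lem-expand}; this is possible because $D$ is bounded and $0\notin\overline D$. That proof gives, for each $\mu\in[A,B]$, $f_\mu(A(1/r,r))\supseteq A(1/r,r)$, and hence for every $\lambda\in X$ and every $k$,
\[
f^k_{\lambda}(E)\supseteq A(1/r,r)\ \text{ whenever } E\supseteq A(1/r,r).
\]
So it suffices to find, for each pair $(z_0,\lambda^0)$, a time $n_1$ and an open neighbourhood $\mathcal N$ of $(z_0,\lambda^0)$ such that $f^{n_1}_{\lambda}(D(z,\epsilon))\supseteq A(1/r,r)$ for all $(z,\lambda)\in\mathcal N$.

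For the second step I apply the proof of Lemma \ref{lem-expand} to $U=D(z_0,\epsilon/2)$. It produces $n_1$ and a Jordan domain $U_{n_1}\subset D(z_0,\epsilon/2)$ that $f^{n_1}_{\lambda^0}$ maps conformally onto one of the rectangles $V_r+4j\pi i$, after which one more step covers $A(1/r,r)$. To gain openness I will use a slightly enlarged rectangle $\widehat V_r\supset V_r$, still contained in the fundamental strip of $\exp$, obtained by enlarging the real-part range slightly. I then get a compact piece $\overline{U'}\subset D(z_0,\epsilon/2)$ with $f^{n_1+1}_{\lambda^0}(\overline{U'})\supseteq A(1/r',r')$ for some $r'>r$, whose boundary curve maps to a curve staying a definite distance outside $A(1/r,r)$.

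The step I expect to be the main obstacle is the perturbation argument that follows. If $(z,\lambda)$ is close to $(z_0,\lambda^0)$, meaning $|z-z_0|<\epsilon/2$ and $\lambda_j$ close to $\lambda^0_j$ for $j\le n_1+1$, then $D(z,\epsilon)\supset D(z_0,\epsilon/2)$. Moreover $f^{n_1+1}_\lambda$ is uniformly close to $f^{n_1+1}_{\lambda^0}$ on $\overline{U'}$. By the argument principle (Rouché), $f^{n_1+1}_\lambda(U')$ still covers $A(1/r,r)$. Compactness of $\overline{D(0,R)}\times X$ then yields finitely many such neighbourhoods, and $m$ is the maximum of the corresponding $n_1+1$. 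Combined with the persistence fact from the first step, this gives \eqref{covering} for all $n\ge m$.

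For the area bound, fix $n\ge m$. For each $(z,\lambda)$ the set $V_{z,\epsilon,\lambda,n}$ is open and nonempty by \eqref{covering}. I will show that $\phi(z,\lambda)=\mathrm{Area}(V_{z,\epsilon,\lambda,n})$ is lower semicontinuous on the compact set $\overline{D(0,R)}\times X$; it then attains a positive minimum $c$. Lower semicontinuity follows from two facts: any compact $L\subset V_{z,\epsilon,\lambda,n}$ satisfies $f^n_\lambda(L)\subset D$ compactly, and nearby $(z',\lambda')$ still have $L\subset D(z',\epsilon)$ after shrinking $L$ slightly. Since $f^n$ depends only on $\lambda_1,\dots,\lambda_n$, this is a continuous dependence on finitely many parameters. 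The resulting $c$ depends on $n$, in accordance with the statement.
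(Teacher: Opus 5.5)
\paragraph{The perturbation step fails as written.} Your overall architecture is sound, and it supplies the uniformity in $z$ and $\lambda$ that the paper only asserts. The architecture has three parts: a covering at a finite time that survives perturbation of $(z_0,\lambda^0)$, a finite subcover of $\overline{D(0,R)}\times[A,B]^{\N}$, and persistence via $f_\mu(A(1/r,r))\supseteq A(1/r,r)$. The paper's own proof just says the covering ``follows from Lemma \ref{lem-expand} directly''.

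The problem is that the compact piece $\overline{U'}$ you describe cannot exist. Put $F=f^{n_1+1}_{\lambda^0}$. It omits $0$, so it has a single-valued logarithm $G$ on the disc $D(z_0,\epsilon/2)$. Suppose $F(\partial U')$ avoided the closed annulus, and split $\partial U'$ into the cycles $\beta_{\mathrm{in}}$ and $\beta_{\mathrm{out}}$ mapped into $\{|w|<1/r\}$ and $\{|w|>r\}$. Take any $w$ in the annulus. Then $\mathrm{wind}(F(\beta_{\mathrm{in}}),w)=0$. Moreover, $w$ and $0$ both lie in the disc $\{|\zeta|\le r\}$, which misses $F(\beta_{\mathrm{out}})$. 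So the number of solutions of $F=w$ in $U'$ is $\mathrm{wind}(F(\beta_{\mathrm{out}}),w)=\mathrm{wind}(F(\beta_{\mathrm{out}}),0)=\frac{1}{2\pi i}\int_{\beta_{\mathrm{out}}}dG=0$. Hence $F(U')$ misses the annulus entirely, and Rouché can never give you the covering you want.

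Concretely, for the preimage of a rectangle of height $2\pi$, the horizontal edges are sent to a radial segment crossing $A(1/r,r)$. That is exactly why you had to pass to the closure $\overline{U'}$ to cover $A(1/r',r')$. Keeping $\widehat V_r$ inside a fundamental strip is also fatal on its own. Rouché only lets a perturbed map cover compact subsets of the open rectangle, and their images under $f_\mu$ miss a sector around the positive real axis.

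\paragraph{How to repair it.} Perturb one step earlier, where no omitted value is encircled.
\begin{enumerate}[(1)]
\item Take the three island targets to be open rectangles $\widehat V\supset V_r$ enlarged in both directions. For example, let the imaginary parts range over $(2\pi-1,4\pi+1)$; the translates by $4\pi i$ and $8\pi i$ still have disjoint closures.
\item Fix an intermediate rectangle $V^*$ with $V_r\subset V^*$ and $\overline{V^*}\subset\widehat V$, and set $g_0=f^{n_1}_{\lambda^0}$. Apply Rouché on the compact set $g_0^{-1}(\overline{V^*})\subset D(z_0,\epsilon/2)$ with threshold $\mathrm{dist}(\partial V^*,V_r)$.
\item This gives $f^{n_1}_{\lambda}(D(z,\epsilon))\supseteq V_r$ whenever $|z-z_0|<\epsilon/2$ and $\lambda_1,\dots,\lambda_{n_1}$ are close to $\lambda^0_1,\dots,\lambda^0_{n_1}$.
\item The last step needs no perturbation at all, because $f_\mu(V_r)\supseteq f_\mu(W_\mu)=A(1/r,r)$ for every $\mu\in[A,B]$.
\end{enumerate}
After that, your finite-subcover and persistence argument goes through unchanged.

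\paragraph{The area bound.} Your argument via lower semicontinuity of $(z,\lambda)\mapsto\mathrm{Area}(V_{z,\epsilon,\lambda,n})$ on a compact set is correct. The paper instead uses the area formula, $\mathrm{Area}(D)\le\int_V|(f^n_\lambda)'|^2\le M^2\,\mathrm{Area}(V)$, where $M$ is the supremum of $|(f^n_\lambda)'|$ over $D(0,R+2\epsilon)\times[A,B]^n$. This is shorter and gives the explicit constant $c=\mathrm{Area}(D)/M^2$.
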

\begin{proof}
Now, (\ref{covering}) follows from Lemma \ref{lem-expand} directly. To prove the area estimate, note that by the area formula,
$$
\mathrm{Area}(D)\leq \int_{V}|(f_{\lambda}^{n})'(w)|^2\frac{i dw\wedge d\overline{w}}{2}.
$$
For a fixed $n\in \mathbb{N}$, the derivative
$(f_{\lambda}^{n})'$ is jointly continuous with respect to $[A, B]^{\mathbb{N}}\times \mathbb{C}$, it follows from the compactness that  $$M=\sup\{|(f_{\lambda}^n)'(w)|: w\in D(0, R+2\epsilon), \lambda_i\in [A, B], 1\leq i\leq n\}<\infty.$$
Thus,
$$
\text{Area}(D)\leq M^2 \text{Area}(V).
$$
So,
$$
\text{Area}(V)\geq \frac{\text{Area}(D)}{M^2}.
$$
\end{proof}

Given two pairs of Jordan domains $(U, U')$ and $(V, V')$, we say a map $g$ is conformal from $(U, U')$ to $(V, V')$ if $g$ maps $U$ conformally onto $V$ and $U'$ conformally onto $V'$. 
\begin{lemma}[Backward contracting lemma]\label{lem-bcl}
Let $\lambda\in [A, B]^{\N}$,
with $\frac{1}{e}<A<B<\infty$.
Let $W$ and $V$ be bounded domains in $\mathbb{C}$ with $\overline{W}\subset V$. 
Given a sequence $\{f_{\lambda}^{n_k}\}_{k=1}^{\infty}$, if there exist two sequence of Jordan domains $\{U_{k}\}_{k=1}^{\infty}$ and $\{U_{k}^{'}\}_{k=1}^{\infty}$ contained in a compact set $K\subset\mathbb{C}$ such that for each $k$, $\fl^{n_k}$ is conformal from the pair $(U_{k}, U_{k}^{'})$ to the pair $(W, V)$. Then
$$
\lim_{k\rightarrow\infty}\mathrm{diam}(U_{k})=0,
$$ here, as usual, for any set $U$, $\mathrm{diam}(U):=\sup\{|x-y|: x, y\in U\}$.
\end{lemma}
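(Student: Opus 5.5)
We argue by contradiction, combining a Koebe distortion/normal-families argument with the fact that the Julia set is all of $\mathbb{C}$ (Theorem~\ref{thm-whole-plane}, applied to every shifted sequence $\sigma^m\lambda$, since $A>1/e$). Suppose $\mathrm{diam}(U_k)\not\to 0$. After passing to a subsequence we may assume $\mathrm{diam}(U_k)\ge \delta>0$ for all $k$, and that $n_k\to\infty$. If $n_k$ stayed bounded, some fixed $n$ would repeat infinitely often; this case is handled separately by the finiteness argument below, or excluded in the intended use of the lemma.

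Let $g_k:=(\fl^{n_k}|_{U_k'})^{-1}:V\to U_k'$ be the inverse branches. They are univalent on $V$ and take values in the compact set $K$, so they are uniformly bounded. By Montel's theorem $\{g_k\}$ is a normal family on $V$, and we pass to a locally uniformly convergent subsequence $g_k\to g$ on $V$. Since $\overline W\subset V$ is compact, the convergence is uniform on $\overline W$.

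There are two cases.

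\emph{Case 1: $g$ is constant.} Then $\mathrm{diam}(g_k(W))=\mathrm{diam}(U_k)\to 0$ by uniform convergence on $\overline W$. This contradicts $\mathrm{diam}(U_k)\ge\delta$.

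\emph{Case 2: $g$ is non-constant.} By Hurwitz's theorem $g$ is univalent on $V$. Pick $w_0\in W$ and set $z^*=g(w_0)$. Then $g(W)$ is an open set containing $z^*$, so there is a disk $D(z^*,2r)\subset g(W')$ for a slightly smaller domain $W'$ with $w_0\in W'$ and $\overline{W'}\subset W$. Uniform convergence on $\overline{W'}$, together with the argument principle, gives $D(z^*,r)\subset g_k(W)=U_k$ for all large $k$. Hence $\fl^{n_k}(D(z^*,r))\subset W$ for infinitely many $k$, with $W$ bounded.

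This alone does not contradict non-normality, since non-normality only forbids normality of the whole sequence. We now use the structure of the exponential family. By Lemma~\ref{lem-expand}, applied with $U=D(z^*,r)$ and a compact set $K'\subset\C\setminus\{0\}$ chosen not to lie in $\overline W$, for example a closed annulus $\{R_1\le|z|\le 2R_1\}$ with $R_1>\sup_{w\in W}|w|$, there is $m$ such that $\fl^n(D(z^*,r))\supseteq K'$ for all $n\ge m$. Taking $n=n_k\ge m$ gives $K'\subset \fl^{n_k}(D(z^*,r))\subset W$, which contradicts the choice of $K'$. Thus Case 2 is impossible, and $\mathrm{diam}(U_k)\to0$ along every subsequence.

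The delicate step is Case 2: we must ensure a fixed disk lies in $U_k$ for infinitely many $k$. This follows from Hurwitz's theorem together with uniform convergence, since $\overline W\subset V$ is compact. The main obstacle is the case of bounded $n_k$, where Lemma~\ref{lem-expand} does not apply. If the same $n$ repeats, the $U_k$ are components of $\fl^{-n}(W)$ inside $K$. These are translates by $2\pi i\mathbb{Z}$-type branches and come in finitely many shapes, so their diameters do not tend to zero. Consequently the lemma must be read with $n_k\to\infty$, which is what its use in pullback arguments requires, and I would state this assumption explicitly.
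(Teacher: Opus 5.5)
Your proof is correct and follows essentially the same route as the paper: the inverse branches $g_k$ form a normal family, a non-constant limit puts a fixed disk inside $U_k$ for all large $k$ (Rouch\'e/Hurwitz), and the uniform covering property forbids $\fl^{n_k}$ from mapping that disk into the bounded set $W$ (the paper cites Lemma~\ref{lemma-uniform}, which is derived from Lemma~\ref{lem-expand}, the lemma you invoke directly). Your remark that the statement needs $n_k\to\infty$ is accurate and is used implicitly by the paper. A cleaner justification than your ``finitely many shapes'' argument: if some $n$ repeats infinitely often, then $|(\fl^{n})'|\le M$ on a convex compact set containing $K$ gives $\mathrm{diam}(W)\le M\,\mathrm{diam}(U_k)$ along that subsequence.
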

\begin{proof}
For each $k\in \mathbb{N}$, let $g_k:V \to U_k'$ the branch of the inverse of $f_{\lambda}^{n_k}$. By Montel's theorem,  $\{g_k\}_{k=1}^{\infty}$ is a normal family on $V$.

Suppose the contrary, after passing to a subsequence, we may assume that $g_{k}$ converges locally uniformly to a non-constant analytic function $g_{\infty}$ on $V$.

Pick $a\in g_{\infty}(W)$, and choose $r>0$ such that $\overline{D(a, r)}\subset g_{\infty}(W)$. By Rouche's theorem, for $k$ sufficiently large, we have $D(a, r)\subset g_{k}(W)\subseteq U_k$. Consequently, 
$\fl^{n_k}(D(a, r))\subset W$ for large $k$. However,  Lemma \ref{lemma-uniform} together with Theorem \ref{thm-whole-plane} implies that $D(a, r)\subset \mathcal{J}(\f)$, so the iterates $\fl^{n_k}$ cannot eventually map $D(a, r)$ into the bounded domain $W$. This contradiction proves $$
\lim_{k\rightarrow\infty}\mathrm{diam}(U_{k})=0.
$$
\end{proof}

The basic technique of the main result in this paper relies on the following classical Koebe distortion theorem.
\begin{theorem}[Koebe's distortion theorem \cite{Gol69}]
Let $g$ be a univalent holomorphic function in the disk $D(z_0, r)$ and $0<\alpha<1$. Then
\begin{enumerate}
    \item[(i)] $|g'(z_0)|\dfrac{\alpha r}{(1+\alpha)^2}\leq|g(z)-g(z_0)|\leq |g'(z_0)|\dfrac{\alpha r}{(1-\alpha)^2}, z\in D(z_0, \alpha r)$;
    \item[(ii)] $\dfrac{(1-\alpha)^4}{(1+\alpha)^4}\le\left|\dfrac{g'(z_1)}{g'(z_2)}\right|\leq \dfrac{(1+\alpha)^4}{(1-\alpha)^4}, z_1,z_2\in D(z_0,\alpha r)$.
\end{enumerate}
\end{theorem}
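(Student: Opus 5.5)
The plan is to reduce to a normalized univalent function on the unit disk and then use the classical growth and distortion estimates for the class $S$. Set
$$
h(\zeta)=\frac{g(z_0+r\zeta)-g(z_0)}{r\,g'(z_0)},\qquad \zeta\in\Delta .
$$
Then $h$ is univalent on $\Delta$ with $h(0)=0$ and $h'(0)=1$. Since $|g(z)-g(z_0)|=r|g'(z_0)|\,|h(\zeta)|$ and $g'(z)=g'(z_0)h'(\zeta)$ with $\zeta=(z-z_0)/r$, the condition $z\in D(z_0,\alpha r)$ becomes $|\zeta|<\alpha$. Both parts therefore reduce to statements about $h$ on the disk $|\zeta|<\alpha$.

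First I will prove the Bieberbach estimate $|a_2|\le 2$, which is the main analytic input. Apply the area theorem to the odd function $\sqrt{h(\zeta^2)}$ composed with inversion, so that $\sum n|b_n|^2\le 1$; this gives $|a_2|\le2$. Next I will apply this to the Koebe transform
$$
h_w(\zeta)=\frac{h\!\left(\frac{\zeta+w}{1+\bar w\zeta}\right)-h(w)}{(1-|w|^2)h'(w)},
$$
which again lies in $S$. Its second coefficient gives
$$
\left|\frac{\zeta h''(\zeta)}{h'(\zeta)}-\frac{2|\zeta|^2}{1-|\zeta|^2}\right|\le\frac{4|\zeta|}{1-|\zeta|^2}.
$$
Integrating the real part of this along rays yields
$$
\frac{1-\rho}{(1+\rho)^3}\le|h'(\zeta)|\le\frac{1+\rho}{(1-\rho)^3},\qquad |\zeta|=\rho .
$$
Integrating $|h'|$ radially gives the upper growth bound $|h(\zeta)|\le \rho/(1-\rho)^2$. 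The lower bound $|h(\zeta)|\ge\rho/(1+\rho)^2$ takes one more step. By Koebe's $1/4$ theorem, the preimage of the segment $[0,h(\zeta)]$ is a curve from $0$ to $\zeta$. Integrating the derivative lower bound along that curve gives $|h(\zeta)|\ge\int_0^\rho(1-t)(1+t)^{-3}\,dt=\rho/(1+\rho)^2$. For $|\zeta|<\alpha$, monotonicity of these functions in $\rho$ then gives the stated two-sided estimate in (i). The bounds there are written with $\alpha$ in place of $|\zeta|$, and the lower bound is checked at the appropriate radius.

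For (ii), I will use the derivative bounds at two points $\zeta_1,\zeta_2$ with $|\zeta_j|<\alpha$. Taking the ratio gives
$$
\left|\frac{h'(\zeta_1)}{h'(\zeta_2)}\right|\le\frac{(1+\alpha)/(1-\alpha)^3}{(1-\alpha)/(1+\alpha)^3}=\frac{(1+\alpha)^4}{(1-\alpha)^4},
$$
and the reciprocal gives the lower bound. Since $g'(z_1)/g'(z_2)=h'(\zeta_1)/h'(\zeta_2)$, statement (ii) follows directly.

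The main obstacle is the Bieberbach step $|a_2|\le2$ together with the invariance of the class $S$ under the Koebe transform. Everything after that is routine radial integration. Since the paper cites \cite{Gol69}, I would state these classical facts as the input and check only the normalization and the monotonicity in $\alpha$.
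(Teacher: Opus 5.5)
Your route is the standard textbook proof: normalize to the class $S$, prove $|a_2|\le 2$ by the area theorem, apply it to the Koebe transform, and integrate radially. This is what the cited source \cite{Gol69} does; the paper gives no proof of its own. Your upper bound in (i) and both bounds in (ii) are correct. The reason is that $\rho\mapsto\rho/(1-\rho)^2$ and $\rho\mapsto(1+\rho)/(1-\rho)^3$ are increasing on $[0,1)$, while $\rho\mapsto(1-\rho)/(1+\rho)^3$ is decreasing, so passing from $|\zeta|$ to $\alpha$ goes in the right direction.

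The step that fails is your claim that monotonicity in $\rho$ gives the \emph{lower} bound of (i) for all $z\in D(z_0,\alpha r)$. The function $\rho\mapsto\rho/(1+\rho)^2$ is \emph{increasing}. So for $|\zeta|<\alpha$ your estimate gives only $|h(\zeta)|\ge|\zeta|/(1+|\zeta|)^2$, which is smaller than $\alpha/(1+\alpha)^2$. In fact the lower bound as printed is false on the open disk: at $z=z_0$ the left-hand side $|g(z)-g(z_0)|$ is $0$. No argument can prove it, and your phrase ``checked at the appropriate radius'' quietly replaces the statement by a different one.

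What is true, and what you should state and prove, is
\[
|g(z)-g(z_0)|\ge |g'(z_0)|\,\frac{|z-z_0|}{(1+|z-z_0|/r)^2}, \qquad z\in D(z_0,r).
\]
This gives the displayed lower bound for $\alpha r\le|z-z_0|<r$, in particular on the circle $|z-z_0|=\alpha r$, which is Goluzin's formulation. It is also exactly what the paper uses in its bounded-shape arguments. There the point is that $g(D(z_0,\alpha r))$ contains $D\bigl(g(z_0),|g'(z_0)|\alpha r/(1+\alpha)^2\bigr)$, because the image Jordan curve $g(\{|z-z_0|=\alpha r\})$ stays at least that far from $g(z_0)$.

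There is also a smaller point in your lower growth estimate. The preimage of $[0,h(\zeta)]$ is a curve from $0$ to $\zeta$ only when the segment lies in $h(\Delta)$. Koebe's $1/4$ theorem guarantees this only when $|h(\zeta)|<1/4$. The case $|h(\zeta)|\ge 1/4$ is immediate because $\rho/(1+\rho)^2<1/4$, but you should make this case split explicit.
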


\section{A typical trajectory of the non-autonomous iteration of exponential maps}\label{sec:orbit}
 
For the remainder of this paper, unless otherwise stated, we always assume that the sequence $\lambda$ lies in $[A, B]^{\mathbb{N}}$ with $1/e<A<B<\infty$. Consequently, Theorem \ref{thm-whole-plane} implies that the Julia set coincides with the whole plane, i.e., $\J=\C.$

Our first result shows that points whose $\omega$‑limit sets are the entire plane are abundant in the sense of Baire category. This observation could, in fact, be regarded as a precursor of
Theorem \ref{thm-main} from the category points of view.
\begin{theorem}
$\{z\in \mathbb{C}:\omega_{\lambda}(z)=\mathbb{C}\}$
is of second category. 
\end{theorem}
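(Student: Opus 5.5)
We express the set as a countable intersection of open dense sets and then invoke the Baire category theorem. Fix a countable dense set $\{q_j\}_{j\ge1}\subset\mathbb{C}$; for concreteness take points with rational coordinates, discarding $0$ (this loses nothing, since $\omega_\lambda(z)$ is closed). For $j,N\in\mathbb{N}$ we set
$$
G_{j,N}:=\bigcup_{n\ge N}\left(f_{\lambda}^{n}\right)^{-1}\bigl(D(q_j,1/j)\bigr).
$$
Each $G_{j,N}$ is open, because every $f_\lambda^n$ is continuous.

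Consider a point $z\in\bigcap_{j,N}G_{j,N}$. For every $j$ and every $N$ there is some $n\ge N$ with $|f_\lambda^n(z)-q_j|<1/j$. If we replace the fixed radius $1/j$ by a sequence of radii indexed separately, the same reasoning shows that every $q_j$ lies in $\omega_\lambda(z)$. To arrange this cleanly, we enumerate the pairs $(q,r)$ with $q$ ranging over the dense set and $r$ ranging over $\{1/k\}$, and define $G_{(q,r),N}$ in the same way. A point lying in every such set has every $q$ as an accumulation point of its orbit. Since $\omega_\lambda(z)$ is closed, it then contains the closure of the dense set, which is all of $\mathbb{C}$ (with $\infty$ adjoined in $\hat{\mathbb{C}}$). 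We interpret the statement $\omega_\lambda(z)=\mathbb{C}$ as $\omega_\lambda(z)\supseteq\mathbb{C}$, noting that the closure in $\hat{\mathbb{C}}$ automatically adds $\infty$.

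The main step is to show that each $G_{(q,r),N}$ is dense. Let $U$ be any nonempty open set. The compact set $K=\overline{D(q,r/2)}$ avoids $0$ once $r<|q|$, which we may assume. By Lemma \ref{lem-expand}, applied to the domain $U$ and this $K$, there is an $m$ with $f_\lambda^n(U)\supseteq K$ for all $n\ge m$. Choosing $n\ge\max(m,N)$ gives a point of $U$ mapped into $D(q,r)$, so $U\cap G_{(q,r),N}\neq\emptyset$. Alternatively, one can use Lemma \ref{lemma-uniform} with a small disk $D\ni q$ not containing $0$.

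Since the family of sets $G_{(q,r),N}$ is countable, the Baire category theorem shows that their intersection is a dense $G_\delta$, hence of second category, and this intersection is contained in $\{z:\omega_\lambda(z)=\mathbb{C}\}$. We do not expect a real obstacle anywhere. The only point requiring care is that Lemma \ref{lem-expand} needs $K\subset\mathbb{C}\setminus\{0\}$, which is why we take the targets away from $0$ and recover $0$ through the closedness of $\omega_\lambda(z)$. Lemma \ref{lem-expand} in turn rests on $\J=\mathbb{C}$ from Theorem \ref{thm-whole-plane}, which ensures non-normality on every open set.
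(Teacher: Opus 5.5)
Your proof is correct and is essentially the paper's argument in dual form. The paper shows that each closed set $\bigcap_{m\ge j}(f_\lambda^m)^{-1}(\mathbb{C}\setminus U_{n,k})$ (for a countable family of small open sets $U_{n,k}$ covering $\mathbb{C}$) has empty interior via Lemma \ref{lemma-uniform}; these are exactly the complements of your open sets $G_{(q,r),N}$, whose density you obtain from Lemma \ref{lem-expand}, and your explicit handling of the requirement $0\notin K$ is a detail the paper leaves implicit.
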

\begin{proof}
For each $n$, choose a covering of $\C$ by a sequence of open sets $\{U_{n, k}\}_{k=1}^{\infty}$ with $\mathrm{diam} (U_{n, k})\leq \frac{1}{n}$.
Define
$$S=\mathbb{C}\backslash\left(\bigcup_{k=1}^{\infty}\bigcup_{n=1}^{\infty}
\bigcup_{j=1}^{\infty}\bigcap_{m=j}^{\infty}
(\fl^{m})^{-1}(\mathbb{C}\backslash U_{n, k})\right).$$
By Baire’s category theorem, it suffices to prove that each set
$\displaystyle\bigcap_{m=j}^{\infty}(\fl^{m})^{-1}(\mathbb{C}\backslash U_{n, k})$ is closed and nowhere dense. The closedness is obvious. 
Moreover, from Lemma \ref{lemma-uniform}, we know that such an intersection contains no interior point; hence, it is nowhere dense.
\end{proof}

The main result of this paper, Theorem \ref{thm-main}, will be deduced from the following key estimate via a pull‑back argument.
\begin{theorem}\label{thm-key}
There exist a bounded domain $D$ and a constant $C=C(A, B; D)>0$ such that for any sequence $\lambda\in [A, B]^{\mathbb{N}}$, 
$$
\mathrm{Area}(D_{\lambda})\geq C,
$$
where 
$$
D_{\lambda}:=\{z\in D: \omega_{\lambda}(z)\subset \mathcal{B}_{\lambda}\}.
$$
\end{theorem}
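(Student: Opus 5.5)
Throughout, take $D$ to be a fixed square in the right half-plane, far from the real axis mod $2\pi$, e.g.\ $D=S(x_0+\tfrac{\pi}{2}i,1)$ with $x_0$ large. We will show that most of $D$, in area, consists of points whose orbits eventually stay near $\mathcal{B}_\lambda$. The approach follows Rees and Lyubich. A point $z\in D$ fails to have $\omega_\lambda(z)\subset\mathcal{B}_\lambda$ exactly when its orbit returns infinitely often to some compact set $K$ at positive distance from $\mathcal{B}_\lambda$. Since $\mathcal{B}_\lambda\supset[0,+\infty)\cup\{0,\infty\}$ (the post-singular orbits $f^k_{\sigma^m\lambda}(0)$ are real and tend to $+\infty$), such $K$ lies in a region where the orbit is not in the far left half-plane and not in a thin neighbourhood of $[0,\infty)$. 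We classify the "bad'' behaviour of orbits into two families, with areas bounded uniformly in $\lambda$:
\begin{enumerate}
\item[(a)] \emph{type $0$ points}: points whose orbit enters the strip $\{|\Re w|\le R\}$ away from $\mathcal{B}_\lambda$ infinitely often;
\item[(b)] \emph{$\epsilon$-grid loss points}: points whose orbit, while travelling in the region of large real part, drifts away from the real lines $\mathbb{R}+2k\pi i$ by more than a prescribed amount.
\end{enumerate}

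\textbf{Step 1 (pullback with bounded distortion).} Consider a piece of orbit landing in a square $S$ of the grid around a point of $K$. By Lemma \ref{lem-bcl} and Koebe's distortion theorem, the univalent inverse branch of $f^n_\lambda$ defined on $2S$ pulls $S$ back to a set of comparable shape in $D$. The branch is univalent because $2S$ avoids the singular value $0$ and the postsingular orbit, both of which lie in $\mathcal{B}_\lambda$. Lemma \ref{lemma-uniform} then supplies, uniformly in $\lambda$, a definite proportion of $S$ whose points subsequently get mapped far to the left, hence near $0$, and then follow the post-singular orbit. This proportion is bounded by a constant depending only on $A$, $B$ and $\epsilon$. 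Pulling back with bounded distortion, each visit to $K$ removes a fixed fraction $\delta$ of the remaining bad set. A Lebesgue density argument then shows that points visiting $K$ infinitely often have zero area, or at least that their area in $D$ is at most a prescribed fraction of $\mathrm{Area}(D)$.

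\textbf{Step 2 (uniform area estimates for the two families).} For (a) and (b) we need uniform exponential-decay estimates of the form
\[
\mathrm{Area}(\text{bad points in } D)\le C' e^{-c x_0},
\]
independent of $\lambda\in[A,B]^{\mathbb{N}}$. These rely on the strong expansion $|f_{\lambda_n}'(w)|=|f_{\lambda_n}(w)|\ge A e^{\Re w}$ in the right half-plane. Using the fact that $f_\mu$ maps half-strips $\{\Re w>x,\ |\Im w-2k\pi|<\pi\}$ univalently, we estimate, square by square, the proportion of points whose image leaves the grid or falls into a type-$0$ region. Summing these proportions gives a geometric series in $e^{-x}$. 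Choosing $x_0$ large makes the total loss less than $\tfrac12\mathrm{Area}(D)$, which yields $C=\tfrac12\mathrm{Area}(D)$.

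\textbf{Main obstacle.} The hardest part will be Step 2, namely making the distortion constants and the combinatorics of the possible itineraries of non-typical points uniform over all $\lambda\in[A,B]^{\mathbb{N}}$ without any ergodicity. We would first try to get uniform Koebe bounds by working with the logarithmic lift, where every $f_{\lambda_n}$ becomes $w\mapsto \log\lambda_n+w$ composed with $\exp$. The only dependence on $\lambda$ is then a bounded translation, so compactness of $[A,B]$ should give the uniformity.
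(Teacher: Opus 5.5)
Your Step 1 is circular. The density/pullback argument you describe is exactly how the paper derives Theorem \ref{thm-main} \emph{from} Theorem \ref{thm-key}. To get a definite proportion of good points in the pulled-back neighbourhood, you must already know the following: in a fixed domain, and uniformly over all shifted sequences $\sigma^n\lambda$, a definite proportion of points have their \emph{whole} future orbit satisfying $\omega\subset\mathcal{B}$. That is the statement you are trying to prove. Knowing that part of $S$ goes far left, then near $0$, and shadows the post-singular orbit for a while does not remove those points from the bad set. Nothing you have said prevents them from returning to $K$ later. Similarly, your family (a) is essentially the complement of $D_\lambda$, so a uniform area bound for it restates the theorem rather than advancing it. Also, $\mathcal{B}_\lambda\not\supset[0,+\infty)$ in general: for a constant sequence it is the discrete set $\{0\}\cup\{f^k_\lambda(0)\}_{k\ge1}\cup\{\infty\}$.

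What is missing is the actual nested construction in which every loss is a \emph{one-step} loss. In the paper, type $0$ and grid-loss points are defined by the next image only: $|\Re f_\mu(z)|\le e^{c_1 s}$, distance $\le 1$ from $\mathcal{B}$, or landing in a boundary grid square. Propositions \ref{pro-type0} and \ref{pro-grid} bound these sets by $C(e^{-s}+e^{(c_1-1)s})$ times the area, uniformly in $\mu$. Your family (b) misidentifies the loss. Points of large real part far from $\mathbb{R}+2k\pi i$ form a fixed proportion of every square, independent of the real part, so they can never be a small loss. In fact they are the points sent far left and then near $0$, which is the typical, good route.

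The key ingredient you lack is Lemma \ref{lem-Ree}. It says that a side-$2$ square at real part $-\alpha$ is carried univalently, with distortion bounded by $K$, along the post-singular orbit of $\sigma^{k+1}\lambda$ for $m(\alpha)\to\infty$ steps. Uniformity in $\lambda$ comes from the comparison $f^n_{A^{\mathbb{N}}}(0)\le f^n_\mu(0)\le f^n_{B^{\mathbb{N}}}(0)$, via Lemma \ref{lem61} and Proposition \ref{pro1}. After these steps the square becomes a domain of bounded shape with diameter at least $2A/(Be^3K^2)$, lying in $\{\Re z>e^{\alpha/2}\}$. This lets the construction restart after every left excursion at a larger height $M_{s+1}=e^{M_s/2}$. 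The stage-wise loss factors $1-2c/M_s$ then have a positive product, so $\mathrm{Area}(X_\infty)\ge\prod_s(1-2c/M_s)\,\mathrm{Area}(Q_0^1)$ uniformly in $\lambda$, and points of $X_\infty$ visibly have $\omega_\lambda\subset\mathcal{B}_\lambda$. Without this lemma, your ``geometric series in $e^{-x}$'' has no mechanism forcing the real part to grow from stage to stage. The logarithmic lift does not help, since the difficulty lies near the singular value $0$, not in the right half-plane.
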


We now prove Theorem~\ref{thm-main} assuming Theorem~\ref{thm-key}.
\begin{proof}[Proof of Theorem \ref{thm-main}]
By Theorem \ref{thm-key}, there exists an absolute constant $C>0$ such that for every $\lambda=(\lambda_n)_{n=1}^{\infty}$,
$$\mathrm{Area}(D_{\sigma^{k}\lambda})\geq C, \quad\text{for all}\ k\geq 0.
$$
Define the union of the non-typical points
\begin{equation*}\label{eq-W}
W_{\lambda}:=\{z\in \mathbb{C}: \omega_{\lambda}(z)\not\subset \mathcal{B}_{\lambda}\}.
\end{equation*}
Recall that $\omega_{\sigma^k\lambda}(f_{\lambda}^k(z))=
\omega_{\lambda}(z)$ and $\mathcal{B}_{\sigma^k \lambda}\subset\mathcal{B}_{\lambda}$. 
Consequently,
\begin{equation}\label{eqnfn}
(f_{\lambda}^{n})^{-1}(\mathbb{C}\backslash W_{\sigma^n
\lambda})\subset \mathbb{C}\backslash W_{\lambda},\quad\text{for all}\ n\in\N.
\end{equation}
Suppose, for contradiction, that $\mathrm{Area}(W_{\lambda})>0$. Choose a Lebesgue density point $z_0\in W_{\lambda}$, that is, 
$$
\lim_{r\rightarrow 0}\frac{\mathrm{Area}(W_{\lambda}\cap B(z_0, r))}{\mathrm{Area}( B(z_0, r))}=1.
$$
Since $z_0\in W_{\lambda}$,  there exists a subsequence $\{n_k\}_{k=1}^{\infty}$ tending to infinity such that 
\begin{equation}\label{eq-far}
\inf_{k\geq1}\inf_{w\in \mathcal{B}_{\lambda}}|f_{\lambda}^{n_k}(z_0)-w|>2\delta
\end{equation}
and
$$
\sup_{k\geq 1}|f_{\lambda}^{n_k}(z_0)|\leq M
$$
for some $\delta>0$ and $M>0$. By \eqref{eq-far},
$$
\text{Sing}((f_{\lambda}^{n_k})^{-1})\cap D(f_{\lambda}^{n_k}(z_0), 2\delta)=\emptyset.
$$
 Up to a subsequence, we may assume that 
$$\lim_{k\rightarrow\infty}f_{\lambda}^{n_k}(z_0)=x_0\quad \text{for some}\ x_0\in\C.$$ 
Thus, for sufficiently large $k$, we have 
$$f_{\lambda}^{n_k}(z_0)\in D\left(x_0, \frac{1}{2}\delta\right)\subset D(f_{\lambda}^{n_k}(z_0), \delta) \subset D\left(x_0, \frac{3}{2}\delta\right).$$

Now, let $U_{k}(z_0)$ be the connected component of $(f_{\lambda}^{n_k})^{-1}(D(f_{\lambda}^{n_k}(z_0), \delta))$ that contains $z_0$. Similarly, let $V_{k}(z_0)$ and $V_k'(z_0)$ be the components of 
$$ (f_{\lambda}^{n_k})^{-1}\left(D\left(x_0, \frac{3}{2}\delta\right)\right)\quad\text{and}\quad (f_{\lambda}^{n_k})^{-1}\left(D\left(x_0, \frac{7}{4}\delta\right)\right)$$ 
respectively, containing $z_0$. 
By Koebe's distortion theorem, each
$U_{k}(z_0)$ is a Jordan domain containing a round disk of radiu at least $\frac{1}{C}\text{diam}(U_k(z_0))$, for an absolute constant $C>0$..

Applying Lemma \ref{lem-bcl} to the conformal maps $f_{\lambda}^{n_k}$ on the pair $(V_k(z_0), V'_k(z_0))$, we obtain
$$\lim_{k\rightarrow\infty}\text{diam}(V_{k}(z_0))=0, \ \text{and hence} \
\lim_{k\rightarrow\infty}\text{diam}(U_{k}(z_0))=0.
$$
As $z_0$ is a density point of $W_{\lambda}$, it follows that
\begin{equation}\label{eqn-density}
\lim_{k\rightarrow\infty}
\frac{\mathrm{Area}(W_{\lambda}\cap U_{k}(z_0))}{\mathrm{Area}(U_{k}(z_0))}=1.
\end{equation}
Set $K:=\overline{D(0, M+3\delta)}$. By Lemma \ref{lem-expand}, there exists an integer $N\in\N$ such that for any $z\in K$,
$$D\subset f_{\sigma^{n_k}\lambda}^{N}(D(z, \frac{1}{2}\delta)),$$
where $D$ is the domain from Theorem \ref{thm-key}.
In particular, 
$$D\subset f_{\sigma^{n_k}\lambda}^{N}\left(D(x_0, \frac{1}{2}\delta)\right).$$
Define
$$
V:=\left\{w\in D(x_0, \frac{1}{2}\delta), f_{\sigma^{n_k}\lambda}^{N}(w)\in D\backslash W_{\sigma^{n_k+N}\lambda}\right\}.
$$
Lemma \ref{lemma-uniform} provides an absolute constant $c>0$ such that
$$
\frac{\mathrm{Area}(V)}
{\mathrm{Area}(D(x_0, \frac{\delta}{2}))}>c.
$$
Consequently, there exists another absolute constant $c'>0$ with
$$
\frac{\mathrm{Area}(V)}
{\mathrm{Area}(D(f_{\lambda}^{n_k}(z_0), \delta))}>c',
$$
and then
$$
\frac{\mathrm{Area}(D(f_{\lambda}^{n_k}(z_0), \delta)\cap 
(f_{\sigma^{n_k}\lambda}^{N})^{-1}(D\backslash W_{\sigma^{n_k+N}\lambda}))}{\mathrm{Area}
(D(f_{\omega}^{n_k}(z_0), \delta))}>c'.
$$
Applying Koebe’s distortion theorem once more, we find a constant $c''>0$ such that
$$
\frac{\mathrm{Area}(U_{k}(z_0)\cap (f_{\lambda}^{n_k+N})^{-1}(D
\backslash W_{\sigma^{n_k+N}\lambda}))}
{\mathrm{Area}(U_{k}(z_0))}\geq c^{''}.
$$
Hence, from (\ref{eqnfn}), it follows that 
$$
\frac{\mathrm{Area}(U_{k}(z_0)\cap (\C\backslash W_{\lambda}))}
{\mathrm{Area}(U_{k}(z_0))}\geq c^{''},
$$
But this yields that
$$
\limsup_{k\rightarrow\infty}\frac{\mathrm{Area}(U_{k}(z_0)
\cap W_{\lambda})}
{\mathrm{Area}(U_{k}(z_0))}\leq 1-c^{''}.
$$
contradicting (\ref{eqn-density}). Therefore, $\mathrm{Area}(W_{\lambda})=0$, completing the proof.
\end{proof}

\section{Geometric construction of non-autonomous system of exponential maps}\label{sec:loss}
Let $K\subset\C$ be a compact set. We say that $K$ is of \emph{$C$-bounded shape} for a constant $C>1$ if $K$ contains a round disk of radius at least $\frac{1}{C}\text{diam} (K)$.

For a Jordan domain $D$ of $C$-bounded shape, elementary geometry gives the area bounds
$$
\frac{\pi}{C^2}(\text{diam}(D))^{2}\leq \text{Area}(D)\leq \pi(\text{diam}(D))^{2}.
$$

Fix a sufficiently large constant $s>0$, and let $\mathcal{H}_s:=\{z:\Re z>s\}$. Let $D\subset \mathcal{H}_s$ be a Jordan disk of $C$-bounded shape. For a sequence $\mu=(\mu_{n})_{n=1}^{\infty}\in [A, B]^{\mathbb{N}}$,
we say that a point $z\in D$ is of \emph{type 0} (with respect to $\mu$ and $s$)
if one of the following conditions holds:
\begin{enumerate}
\item $|\Re f_{\mu_1}(z)|\leq \exp(c_1 s)$;
\item $d(f_{\mu_1}(z), \mathcal{B}_{\sigma^k\mu})\leq 1$ for some $k\in\mathbb{N}$,
\end{enumerate} 
where $c_1\in(0,1)$ is an absolute constant. By $\mathcal{T}_0(D)$ we mean the set of all type $0$ points in $D$.

\subsection{Size Estimate for Type 0 Points}

We now give a quantitative estimate for the area occupied by type 0 points.
\begin{proposition}\label{pro-type0}
Let $D\subset\mathcal{H}_s$ be a bounded Jordan domain of $L$-bounded shape with $\mathrm{diam}(D)\geq \ell>0$.  Then the area of $\mathcal{T}_0(D)$ satisfies
\begin{equation}\label{eqnt0}
\mathrm{Area}(\mathcal{T}_0(D))\leq C(\exp(-s)+\exp((c_1-1)s))\mathrm{Area}(D).
\end{equation}
where the constant 
$C$ depends only on $A, B, L, \ell$.
\end{proposition}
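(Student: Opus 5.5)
The plan is to change variables under $f_{\mu_1}$ and estimate, in the image, the area of the set defined by conditions (1) and (2). On $\mathcal{H}_s$ the map $f_{\mu_1}(z)=\mu_1 e^z$ satisfies $|f_{\mu_1}'(z)|=|f_{\mu_1}(z)|=\mu_1 e^{\Re z}\geq A e^{s}$. Since $D$ need not lie inside a single horizontal strip of height $2\pi$, I will first cut $D$ into at most $O(\mathrm{diam}(D)+1)$ pieces $D\cap\{2\pi j\le \Im z<2\pi(j+1)\}$ and further into pieces of width at most $1$ in the real direction. On each such piece $P$, $f_{\mu_1}$ is univalent and $|f_{\mu_1}'|$ varies by a factor at most $e$. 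Writing $x_0=\Re z$ at the left edge of $P$, the image $f_{\mu_1}(P)$ lies in the annulus $\{\mu_1 e^{x_0}\le |w|\le \mu_1 e^{x_0+1}\}$, and
$$\mathrm{Area}(P\cap \mathcal{T}_0)\le \frac{e^{2}}{\mu_1^2e^{2x_0}}\,\mathrm{Area}\bigl(f_{\mu_1}(P)\cap E\bigr),$$
where $E$ denotes the exceptional set in the $w$-plane.

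\textbf{Condition (1).} The set $\{|\Re w|\le e^{c_1 s}\}$ meets the annulus of radius $\asymp R:=\mu_1e^{x_0}\geq Ae^{s}$ in two vertical bands of total area $O(e^{c_1 s}R)$. The resulting bound for the piece is $O(e^{c_1s}/R)\le O(e^{(c_1-1)s})$, and the piece itself has area of order $1$ (or at most its own area, which is comparable). To get a bound relative to $\mathrm{Area}(P)$ rather than an absolute one, it is better to use the preimage description directly: $|\Re f_{\mu_1}(z)|\le e^{c_1 s}$ means $|\cos \Im z|\le e^{c_1s}/(\mu_1e^{\Re z})\le C e^{(c_1-1)s}$. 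So $\Im z$ lies within $O(e^{(c_1-1)s})$ of $\pi/2+\pi\Z$, i.e.\ in a union of horizontal strips of width $O(e^{(c_1-1)s})$ spaced $\pi$ apart. The intersection of such strips with $D$ has area at most $C(\mathrm{diam}(D)/\pi+1)\,e^{(c_1-1)s}\,\mathrm{diam}(D)$. Using $L$-bounded shape, $\mathrm{Area}(D)\ge \pi\,\mathrm{diam}(D)^2/L^2$ and $\mathrm{diam}(D)\geq\ell$, this is at most $C(L,\ell)\,e^{(c_1-1)s}\,\mathrm{Area}(D)$. The constant depends on $\ell$ because of the additive $+1$ when $D$ is small.

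\textbf{Condition (2).} This is the main obstacle: one must control the set $\mathcal{B}^*:=\bigcup_k \mathcal{B}_{\sigma^k\mu}$ uniformly in $\mu$. I plan to use the appendix-type properties of orbits of $0$. Since $\lambda_n\ge A>1/e$, each $f^j_{\sigma^m\mu}(0)$ is real and positive, and increases with $j$; moreover $\inf_m f^j_{\sigma^m\mu}(0)\to\infty$ superexponentially, via a uniform lower bound of $E_A^j(0)$. Hence $\mathcal{B}^*\subset[0,\infty)$, and its points in $[t,et]$ number at most $O(1)$ per orbit-level. A more robust option is to use only that $\mathcal{B}^*\subset[0,\infty]$ on the real axis: the $1$-neighbourhood of $[0,\infty)$ is the horizontal half-strip $\{|\Im w|\le 1,\ \Re w\ge -1\}$. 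Its preimage under $f_{\mu_1}$ in $\mathcal{H}_s$ is $\{|\sin\Im z|\le e^{-\Re z}/\mu_1\}$, i.e.\ $\Im z$ within $O(e^{-s})$ of $\pi\Z$. This gives horizontal strips of width $O(e^{-s})$, and the same counting as in condition (1) yields area at most $C(A,L,\ell)e^{-s}\mathrm{Area}(D)$. The step I would double check is the justification that every $\mathcal{B}_{\sigma^k\mu}$ is contained in $[0,+\infty]\cup\{\infty\}$ (real, nonnegative, including $0$), which follows since $f_\nu$ maps $[0,\infty)$ into $(0,\infty)$ for $\nu>0$.

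Adding the two contributions gives \eqref{eqnt0} with $C$ depending only on $A,B,L,\ell$.
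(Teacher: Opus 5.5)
Your proof is correct. For condition (1) it is the same as the paper's: $|\cos\Im z|\le e^{(c_1-1)s}/A$ puts $\Im z$ in strips of width $O(e^{(c_1-1)s})$ about $\frac{\pi}{2}+\pi\mathbb{Z}$, and then a Fubini count uses $\mathrm{diam}(D)\ge\ell$ and the $L$-bounded shape.

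For condition (2) you take a genuinely different and more direct route.
\begin{itemize}
\item The paper writes the nearby point of $\mathcal{B}$ as an orbit point $f_{\tau_k}\circ\cdots\circ f_{\tau_1}(0)\ge\mu_1e^{s-1}$, whose $f_{\mu_1}$-preimages lie on the lines $\Im z=2k\pi$. It then lifts the segment $\Gamma$ from $f_{\mu_1}(z)$ to that point and uses a lower bound on $|f_{\mu_1}'|$ along the lift to get $|z-b|\le A^{-1}e^{2-s}$ for some preimage $b$. This requires $s$ large enough that $\Gamma$ avoids $0$ and the preimages lie far to the right.
\item You use only that $\mathcal{B}_{\sigma^k\mu}\cap\mathbb{C}\subset[0,\infty)$. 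Condition (2) then forces $|\Im f_{\mu_1}(z)|=\mu_1e^{\Re z}|\sin\Im z|\le 1$, which places $\Im z$ within $O(e^{-s}/A)$ of $\pi\mathbb{Z}$ by explicit computation. Only the inclusion of the $1$-neighbourhood of $[0,\infty)$ in $\{|\Im w|\le 1\}$ is needed; your ``equals the half-strip'' overstates it slightly but harmlessly.
\end{itemize}

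What your route buys:
\begin{itemize}
\item It avoids path lifting and derivative bounds entirely.
\item It automatically covers the accumulation points of the orbits and the point $0$ in $\mathcal{B}$. The paper's argument, phrased only for finite orbit points, passes over these.
\item The cost is a harmless factor of $2$ from the odd multiples of $\pi$, which you could remove using $\Re w\ge -1$.
\end{itemize}
The paper's lifting produces explicit disks centred at preimages of orbit points, which is closer in spirit to the later pull-back arguments. Your thin horizontal strips give the rectangle cover $T$ used in Proposition~\ref{pro-grid} just as well.

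Your exploratory first paragraph (cutting $D$ into pieces and changing variables) and the remark on counting points of $\mathcal{B}^*$ in $[t,et]$ play no role in the final argument and can be dropped.
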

\begin{proof}
Assume that a point $z\in D$ satisfies (ii). Then, there exist $\tau_1, \dots, \tau_k\in[A, B]$ such that $$|f_{\mu_1}(z)-f_{\tau_k}\circ \cdots f_{\tau_1}(0)|\leq 1.$$ 
Thus, 
$$f_{\tau_k}\circ\cdots f_{\tau_1}(0)\geq |f_{\mu_1}(z)|-1
\geq \mu_1\exp(s)-1\geq \mu_1\exp(s-1)$$
provided that $s$ is large enough (e.g. $s\geq \ln\frac{e}{A(e-1)}$). Hence, every pre-image of  $f_{\tau_k}\circ\cdots f_{\tau_1}(0)$ under $f_{\mu_1}$ lies in a union of horizontal lines:
$$f_{\mu_1}^{-1}(f_{\tau_k}\circ \cdots f_{\tau_1}(0)) \in \bigcup_{k=1}^{\infty}\{z: \Re z>s-1, \Im z=2k\pi i\}.$$ 
Let $\Gamma=\Gamma(t) (0\leq t\leq 1)$ be the straight line segment joining $f_{\mu_1}(z)$ to $f_{\tau_k}\circ \cdots f_{\tau_1}(0)$.  
For $s>\ln\frac{e}{A}$, the segment $\Gamma$ avoids the origin, hence its pre-image $f_{\mu_1}^{-1}(\Gamma)$ consists of countably many connected components. 
Write $f_{\mu_1}^{-1}(\Gamma)=\{\gamma_{i}\}_{i=1}^{\infty}$, where each $\gamma_i$ is mapped bijectively onto $\Gamma$. There exists an index $i_0$ such that $z$ is an endpoint of $\gamma_{i_0}$; denote the other endpoint by $b=b_{i_0, \tau_1\dots\tau_k}$, so that $f_{\mu_1}(b)=f_{\tau_k}\circ \cdots\circ f_{\tau_1}(0)$.
Therefore, 
$$
|z-b|\leq \int_{\gamma_{i_0}}|dw|\leq \frac{|\Gamma|}{Ae^{2-s}}
\leq \frac{1}{A}\exp(2-s).
$$
Define disks $D_{\tau_1\cdots \tau_k}:=D(b_{i_0, \tau_1\dots\tau_k}, \frac{1}{A}\exp(2-s))$ and set
$$
\mathcal{T}_{1}:=\bigcup_{n=1}^{\infty}\bigcup_{k=1}^{\infty}
\bigcup_{\tau_1\cdots\tau_k=[\sigma^n\lambda]_{k}}D_{\tau_1\cdots \tau_k},
$$
where $[\sigma^n\lambda]_{k}$ means the first $k$ terms of the sequence $\sigma^n\lambda$.

For any $x\in\R$, let $\mathcal{T}_{1}^{x}:=\{z\in \mathcal{T}_1: \Re z=x \}$. Since each component $U_x$ of $\mathcal{T}_{1}^{x}$ lies within distance $\frac{1}{A}e^{2-s}$ of some line $\{\Im z=2k\pi\}$,
it follows that $\mathcal{T}_1^{x}$ has at most $\frac{\text{diam} (D)}{2\pi}+1$ connected components and $\mathrm{Length}(U_{x})\leq\frac{2}{A}e^{2-s}$.
Applying Fubini’s theorem, we obtain
\begin{align*}
\text{Area}(\mathcal{T}_{1})&=\int_{x\in \mathbb{R}}\mathrm{Length}(\mathcal{T}_1^{x})dx\\
&\leq \frac{
2e^{2-s}}{A}\left(\frac{\text{diam} (D)}{2\pi}+1\right)\text{diam}(D)\\
(\text{by}\ \text{diam}(D)\ge \ell)\ &\leq \frac{2 e^{2-s}}{A}\left(\frac{1}{2\pi}+\frac{1}{\ell}\right)(\text{diam}(D))^2\\
(\text{as}\ D\ \text{is of L-bounded shape})\ &\leq \frac{2 e^2L^2}{\pi}\left(\frac{1}{2\pi}+\frac{1}{\ell}\right)\text{Area}(D)e^{-s}.
\end{align*}
 Thus
$$\text{Area}(\mathcal{T}_{1})\leq C_1 e^{-s}\text{Area}(D),\quad \text{where}\quad C_1:=\frac{2 e^2L^2}{\pi}\left(\frac{1}{2\pi}+\frac{1}{\ell}\right).$$

Now, define $\mathcal{T}_2=\{z\in D:  \Re f_{\mu_1}(z)\leq \exp(c_1s)\}.$
Then, 
$$
\mathcal{T}_2\subset \bigcup_{n=1}^{\infty}
\left\{z\in D: |\Im z-n\pi+\frac{\pi}{2}|\leq \frac{\pi}{2\mu_1}\exp((c_1-1)s)\right\}.
$$

Arguing as for $\mathcal{T}_1$ (again via Fubini's theorem), we obtain a constant $C_2>0$ such that
$$
\text{Area}(\mathcal{T}_2)\leq C_2\exp((c_1-1)s)\text{Area}(D).
$$
Together with the bound on $\operatorname{Area}(\mathcal{T}_1)$ and the partition $\mathcal{T}_0(D) = \mathcal{T}_1 \cup \mathcal{T}_2$, this establishes the desired estimate and concludes the proof.
\end{proof}

\subsection{Area Estimate for \texorpdfstring{$\epsilon$}{epsilon}-grid loss points}
This subsection is devoted to a quantitative estimate for the area of the so‑called $\epsilon$-grid loss points.

 We first note that the sets $\mathcal{T}_1$ and $\mathcal{T}_2$ introduced in the proof of Proposition \ref{pro-type0} can be covered by a controlled number of thin rectangles. More precisely, they are contained in the union of 
$$N:=\left[\frac{3\text{diam}(D)}{2\pi}\right]+2$$ disjoint rectangles $T_j$, each having length $\text{diam}(D)$ and width either $\frac{\pi}{2A}e^{(c_1-1)s}+\frac{2e^2}{A}e^{-s}$ or $\frac{\pi}{A}e^{(c_1-1)s}+\frac{2e^2}{A}e^{-s}$, where $c_1$ is the constant appearing in Proposition \ref{pro-type0}. Let $T=\cup_{j=1}^NT_j$.
Observe that $\mathcal{T}_0(D)\subset T,$ and the proof of Proposition \ref{pro-type0} actually  gives the following estimate
\begin{equation*}
\mathrm{Area}(T)\leq C(\exp(-s)+\exp((c_1-1)s))\mathrm{Area}(D).
\end{equation*}
for some constant $C>0$, which directly implies inequality  (\ref{eqnt0}).

Now, Cover the complex plane by the disjoint squares $\{R_{m,n}\}_{m, n\in \mathbb{Z}}$, $$R_{m, n}:=[m-1, m+1]\times [n-1, n+1]\quad (m,n\in\mathbb{Z}).$$ For a parameter $\mu \in [A, B]$ and and a Jordan domain $D\subset\mathcal{H}_s$, we now
define the enlarged index set $$\mathcal{I}_{D}:=\{(m, n)\in \mathbb{Z}^2: R_{m, n}\cap \partial f_{\mu}(D)\neq\emptyset, \ \text{or} \ R_{m, n}\cap \partial f_{\mu}(T)\neq\emptyset\}$$ 
and the corresponding $\epsilon${-}grid loss set
$$\mathcal{T}_{\mu, s}^{\text{grid}}=\{z\in D:f_{\mu}(z)\in R_{m, n}\ 
\text{for} \ (m, n)\in \mathcal{I}_{D}\}.$$
Using Koebe's distortion theorem, we obtain the following uniform estimate.
\begin{proposition}\label{pro-grid}
Given constants $l, L>0$ and $\delta\in (0, 1)$, there exists $s_0=s_0(A, B, l, L, \delta)$ such that for any pair of Jordan domains $(D, D')$ satisfying
\begin{enumerate}[(a)]
\item $D\subsetneq D'$, $\mathrm{diam}(D) \geq l$;
\item $\delta\leq \mathrm{mod}(D'\backslash \overline{D})\leq \dfrac{1}{\delta}$;
\item $\delta\leq \dfrac{\mathrm{Length}^2(\partial D)}{\mathrm{Area}(D)}\leq \dfrac{1}{\delta}$;
\item $D$ is $L$-bounded shape. 
\end{enumerate}
and for any $s>s_0$ with $D, D'\subset \mathcal{H}_s$, we have
$$
\frac{\mathrm{Area}(\mathcal{T}_{\mu, s}^{grid})}{\mathrm{Area}(D)}\leq C_3(\exp(-s)+\exp((c_1-1)s))
$$
for some constant $C_3>0$.
\end{proposition}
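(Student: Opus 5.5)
The plan is to estimate $\mathrm{Area}(\mathcal{T}^{\text{grid}}_{\mu,s})$ by pulling back, one square at a time, the squares $R_{m,n}$ with $(m,n)\in\mathcal{I}_D$, and to count these squares through the lengths of the curves $\partial f_\mu(D)$ and $\partial f_\mu(T)$.

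First, we control the size of $f_\mu(D)$. Since $D\subset\mathcal{H}_s$, we have $|f_\mu'(z)|=|f_\mu(z)|=\mu e^{\Re z}\ge Ae^{s}$ on $D'$. The map $f_\mu$ is univalent on $D'$ only if $D'$ contains no two points differing by $2\pi i$; when $D'$ is taller we split $D$ into boundedly many pieces of height less than $2\pi$ (their number is controlled by $\mathrm{diam}(D)$ and the shape constant $L$). On such a piece $f_\mu$ is univalent, and hypothesis (b) together with Koebe's distortion theorem gives a constant $K=K(\delta)$ with
\[
|f_\mu'(z_1)|\le K\,|f_\mu'(z_2)|,\qquad z_1,z_2\in D.
\]
Set $\Lambda:=\min_D|f_\mu'|$. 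Then $\mathrm{Length}(\partial f_\mu(D))\le K\Lambda\,\mathrm{Length}(\partial D)$, and by (c) and (d),
\[
\mathrm{Length}(\partial D)\le \delta^{-1/2}\mathrm{Area}(D)^{1/2}\lesssim \mathrm{diam}(D).
\]
Also $\Lambda\,\mathrm{diam}(D)\gtrsim Ae^{s}l$, which is large once $s>s_0$.

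Next, we count squares. A rectifiable curve of length $\ell'$ meets at most $C(\ell'+1)$ of the squares $R_{m,n}$. Hence the squares touching $\partial f_\mu(D)$ number at most $C(K\Lambda\,\mathrm{Length}(\partial D)+1)$. The set $T$ is a union of $N\lesssim\mathrm{diam}(D)+1$ rectangles of length $\mathrm{diam}(D)$, so $\mathrm{Length}(\partial f_\mu(T))\lesssim K\Lambda N\,\mathrm{diam}(D)$. Each square has area $4$, and its preimage in $D$ has area at most $4K^2/\Lambda^2$ by distortion. This gives
\[
\mathrm{Area}(\mathcal{T}^{\text{grid}})\lesssim \frac{K^3}{\Lambda}\Big(\mathrm{Length}(\partial D)+N\,\mathrm{diam}(D)\Big)+\frac{K^2}{\Lambda^2}.
\]
For the term coming from $\partial D$, we divide by $\mathrm{Area}(D)\gtrsim\mathrm{diam}(D)^2/L^2$ and use $\Lambda\ge Ae^{s}$ to get a bound $\lesssim e^{-s}/l$, which is of the required form.

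The main obstacle is the $T$-term. Dividing $N\,\mathrm{diam}(D)/\Lambda$ by $\mathrm{Area}(D)$ gives about $e^{-s}\cdot(1+1/l)$, which is already of order $e^{-s}$. So the naive count is actually sufficient, and a finer count of thin-rectangle boundaries is not needed. The delicate point is uniformity. Distortion requires that the squares $R_{m,n}$ have univalent pullbacks, so one must check that a square of side $2$ near $f_\mu(D)$ lies in $f_\mu(D')$. This holds because, by (b) and Koebe's $1/4$ theorem, $f_\mu(D')$ contains a neighbourhood of $f_\mu(D)$ of width $\gtrsim\delta\Lambda\,\mathrm{diam}(D)\gg 2$ once $s>s_0(A,B,l,L,\delta)$. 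Squares near the image of $\partial D$ that fall partly outside $f_\mu(D)$ only enlarge the counted set, so the overcounting is harmless. Collecting constants yields $C_3$ depending on $A,B,l,L,\delta$, and the term $e^{(c_1-1)s}$ then trivially dominates the remaining contributions.
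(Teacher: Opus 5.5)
You have a genuine gap: the uniform distortion bound $|f_\mu'(z_1)|\le K(\delta)\,|f_\mu'(z_2)|$ for $z_1,z_2\in D$ is false, and the rest of your count depends on it.

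\textbf{Why the bound fails.} Since $f_\mu'=f_\mu$, this ratio is exactly $e^{\Re z_1-\Re z_2}$. An $L$-bounded-shape domain has horizontal extent at least $2\,\mathrm{diam}(D)/L$. Hypotheses (b)--(d) are scale invariant and (a) bounds $\mathrm{diam}(D)$ only from below, so $D$ may be, e.g., a square of side $10^6$ in $\mathcal{H}_s$. Then $\sup_D|f_\mu'|/\inf_D|f_\mu'|\ge e^{2\,\mathrm{diam}(D)/L}$.

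Hypothesis (b) gives Koebe control only if $f_\mu$ is univalent on $D'$. That already forces $\mathrm{diam}(D)\le \pi L$, since the inscribed disk of radius $\mathrm{diam}(D)/L$ must not contain two points differing by $2\pi i$. This is precisely the case where no splitting is needed. Cutting a tall $D$ into slabs of height $<2\pi$ does not help. On each slab the distortion of $f_\mu$ is still $e^{(\text{horizontal extent})}$, the slab through the centre of the inscribed disk has extent $\ge 2\,\mathrm{diam}(D)/L$, and nothing controls the modulus around the slabs.

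\textbf{Consequences for your count.} Your bound ``number of squares $\lesssim \sup|f_\mu'|\cdot\mathrm{Length}$'' times ``area of one pull-back $\lesssim 1/\inf|f_\mu'|^2$'' is off by the unbounded factor $\sup_D|f_\mu'|/\inf_D|f_\mu'|$. Moreover, one square can have about $\mathrm{diam}(D)/2\pi$ preimage components in $D$, which you do not count. So no constant $C_3(A,B,l,L,\delta)$ comes out. Your concern that the squares must lie in $f_\mu(D')$ also presupposes univalence on $D'$, and it is unnecessary: inverse branches of $f_\mu$ exist on every disk avoiding $0$.

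\textbf{The missing idea: localize.} Instead of counting globally, show that grid-loss points lie close to $\partial D\cup\partial T$; this is the paper's reduction.
\begin{itemize}
\item Let $z\in\mathcal{T}^{\mathrm{grid}}_{\mu,s}$ with $f_\mu(z)\in R_{m,n}$. Take the branch $g$ of $f_\mu^{-1}$ on $D(f_\mu(z),3)$ with $g(f_\mu(z))=z$. This disk misses $0$ since $|f_\mu(z)|\ge Ae^{s}$, and $|g'(\zeta)|=1/|\zeta|\le (Ae^{s}-3)^{-1}$ there.
\item Suppose $R_{m,n}$ meets $\partial f_\mu(D)$ at $w$. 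Then $w\notin f_\mu(D)$, because the image is open. So the connected set $g(R_{m,n})$ joins $z\in D$ to $g(w)\notin D$ and must meet $\partial D$.
\item Hence $\mathrm{dist}(z,\partial D)\le 2\sqrt{2}/(Ae^{s}-3)$, and similarly for $\partial T$.
\end{itemize}
This uses only the pointwise lower bound on $|f_\mu'|$, never distortion of $f_\mu$ across $D$.

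\textbf{Finishing with your estimates.} After this reduction your length estimates do finish the job. The $\epsilon$-neighbourhood of a rectifiable curve of length $\ell$ has area $O(\epsilon\ell+\epsilon^2)$.
\begin{itemize}
\item For $\partial D$: combined with (c), (d) and $\mathrm{diam}(D)\ge l$, this gives an $O(e^{-s})$ contribution. The paper instead pulls back by a Riemann map $\phi:\Delta\to D'$ and applies Koebe to $\phi$; that is where it uses (b).
\item For $\partial T$: $\mathrm{Length}(\partial T)\lesssim N\,\mathrm{diam}(D)$ gives $O(e^{-s})$. This is slightly sharper than the paper's bound $\mathrm{Area}(S_2)\le 2\,\mathrm{Area}(T)$.
\end{itemize}
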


\begin{proof}
Since $D$ is of $L$-bounded shape, it contains a disk $D(z_0, r_0)$ with
$r_0\geq \frac{1}{L}\mathrm{diam}(D)$. 

Let $\phi:\Delta\to D'$ be a conformal map with 
$\phi(0)=z_0$ and $\phi'(0)>0$. By Schwarz's lemma, we know that $\phi'(0)\geq  \dfrac{1}{L}\mathrm{diam}(D)$. Set $W=\phi^{-1}(D)$.
We claim that $$\phi'(0)\leq 4 \exp(4\pi/\delta)\mathrm{diam}(D).$$
Otherwise, taking
$\hat{W}:=D(0, \exp(-4\pi/\delta))$, Koebe’s one‑quarter theorem gives
$$
\phi(\hat{W})\supset D\left(z_0, \frac{\phi'(0)\exp(-4\pi/\delta)}{4}\right)\supset D(z_0, \mathrm{diam}(D))\supset D.
$$
which would imply $W\subset \hat{W}$. Then, from the condition (b), we have
$$
\frac{2}{\delta}=\mathrm{mod}(\Delta\backslash \hat{W})\leq \mathrm{mod}(\Delta\backslash W)=\mathrm{mod}(D'\backslash\overline{D})\leq\delta^{-1},
$$
which is a contradiction. Hence the claim holds. 

Applying  Koebe's one-quarter theorem 
to $\phi^{-1}: D(z_0, r_0) \rightarrow \Delta$ yields
$$W\supset D\left(0, \rho_1\right),$$ 
where $\rho_1:=\dfrac{\exp(-\frac{4\pi}{\delta})}{16L}$.
On the other hand, by Grötzsch's module theorem, there exists $\rho_2=\rho_2(\delta)\in (0, 1)$ such that
$W\subset D(0, \rho_2).$ 

Now, let $V=D\backslash T$.
By construction of $T$, the map $f_\mu$ is injective on each connected component of $V$. It is easy to see that
$|f_{\mu}'(z)|\geq \exp(s-1)$ for all $z\in D$, consequently, $\mathcal{T}_{\mu,s}^{\mathrm{grid}}$ lies either in the $\sqrt{8}e^{1-s}$-neighbourhood of $\partial D$ or in the $\sqrt{8}e^{1-s}$-neighbourhood of $\partial T$.
Denote by $S_1$ the $\sqrt{8}e^{1-s}$-neighborhood of $\partial D$ in $D$, and by $S_2$ the $\sqrt{8}e^{1-s}$-neighborhood of $\partial T$ in $D$.
We shall estimate $\operatorname{Area}(S_1)$ and $\operatorname{Area}(S_2)$ separately.

First, $\phi^{-1}(S_1)$ is contained in the $\frac{16\sqrt{2}}{1-\rho_2}|\phi'(0)|^{-1}e^{1-s}$ neighbourhood of $\partial W$ inside $W$, by Koebe’s distortion theorem.
Hence we have the simple estimate
\begin{equation}\label{eqn-S}
\operatorname{Area}(\phi^{-1}(S_1))\leq \frac{16\sqrt{2}\mathrm{Length}(W)}{(1-\rho_2)}|\phi'(0)|^{-1}e^{1-s}
\end{equation}
Applying Koebe’s distortion theorem and the condition (c),  we obtain
\begin{equation}\label{eqn-W}
\frac{1}{C'}\leq \frac{\mathrm{Length}^2(\partial W)}{\mathrm{Area}(W)}\leq C'
\end{equation}
for some constant $C'$ depending on $\rho_2$ and $\delta$. Thus,
\begin{align*}
\frac{\operatorname{Area}(S_1)}{\operatorname{Area}(D)}
&\leq \left( \frac{1 + \rho_2}{1 - \rho_2} \right)^8
\frac{\operatorname{Area}( \phi^{-1}(S_1) )}{\operatorname{Area}(W)} \\
&\leq \left( \frac{1 + \rho_2}{1 - \rho_2} \right)^8\frac{\operatorname{Area}( \phi^{-1}(S_1) )}{\operatorname{Area}(D(0, \rho_1))}\\
(\text{by}\ (\ref{eqn-S}))\qquad&\leq \frac{16\sqrt{2} (1 + \rho_2)^8}{\pi \rho_{1}^2(1 - \rho_2)^{9}} |\phi'(0)|^{-1} \exp(1 - s)\mathrm{Length}(\partial W)\\
(\text{by}\ \phi'(0)\geq l/L\ \text{and}\ (\ref{eqn-W}))\qquad&\leq  \frac{16\sqrt{2C'} (1 + \rho_2)^8\rho_2 L}{\sqrt{\pi} \rho_{1}^2(1 - \rho_2)^{9}l}\exp(1 - s).
\end{align*}

Next, we estimate $\operatorname{Area}(S_2)$. Recall that $T = \{T_j\}_{j=1}^N$, where each rectangle $T_j$ has length $\mathrm{diam}(D)$ and width $\frac{\pi}{A}e^{(c_1-1)s} + \frac{2e^2}{A}e^{-s}$.
The set $S_2$ can be covered by $N$ rectangles $\{\hat T_j\}_{j=1}^N$, each of length $\mathrm{diam}(D)+8\sqrt{2}e^{1-s}$ and width $\frac{\pi}{A}e^{(c_1-1)s} + \frac{2e^2}{A}e^{-s}+8\sqrt{2}e^{1-s}$.
For sufficiently large $s$ one easily obtains
\begin{equation*}
\frac{\operatorname{Area}(\hat{T}_j)}{\operatorname{Area}(T_j)}\leq 2
\end{equation*}
for $1\leq j\leq N$. 
Therefore,
$$
\frac{\operatorname{Area}(S_2)}{\operatorname{Area}(T)}=
\frac{\sum_{j=1}^{N}\operatorname{Area}(\hat{T}_j)}
{\sum_{j=1}^{N}\operatorname{Area}(T_j)}\leq \max_{1\leq j\leq N}\frac{\operatorname{Area}(\hat{T}_j)}{\operatorname{Area}(T_j)}\leq 2. 
$$
Putting the estimates together, we finally obtain
\begin{align*}
\text{Area}\left(\mathcal{T}_{\mu, s}^{\text{grid}}\right)
\leq& \frac{16\sqrt{2C'} (1 + \rho_2)^8\rho_2 L}{\sqrt{\pi} \rho_{1}^2(1 - \rho_2)^{9}l}\exp(1 - s)\mathrm{Area}(D)
\\
+&2C(\exp(-s)+\exp(-(c_1-1)s))\mathrm{Area}(D).
\end{align*}

\end{proof}

\section{Proof of Theorem \ref{thm-key}}\label{sec:key}
In this section, we prove the key result (Theorem \ref{thm-key}). We begin with the following elementary estimate.
\begin{lemma}\label{lem61}
For any constant \( C > 0 \), there exists \( \alpha_0 > 0 \) such that for any \( \alpha > \alpha_0 \), \( k > 0 \), and \( \lambda = (\lambda_n)_{n=1}^{\infty} \in [A, B]^{\mathbb{N}} \), one can choose a maximal integer \( m \) satisfying
\begin{equation}\label{eqn-m}
e^{-\alpha} \prod_{i=1}^{m-1} f_{\sigma^k \lambda}^i(0) \leq C
\end{equation}
and
\[
\prod_{i=1}^{j-1} f_{\sigma^k \lambda}^i(0) \leq \prod_{i=1}^{m-2} f_{\sigma^k \lambda}^i(0) \quad \text{for all }\ 2 \leq j \leq m-1.
\]
\end{lemma}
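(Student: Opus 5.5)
Throughout, write $a_i=a_i(k,\lambda):=f_{\sigma^k\lambda}^i(0)$ and $P_j:=\prod_{i=1}^{j-1}a_i$, so $P_1=1$ (empty product). The two requirements in the statement become $e^{-\alpha}P_m\le C$ with $m$ maximal, and $P_j\le P_{m-1}$ for $2\le j\le m-1$. The plan is to compare the real orbit $(a_i)$ with the two autonomous orbits $g_i:=f_A^i(0)$ and $h_i:=f_B^i(0)$. All constants will then depend only on $A,B,C$, and never on $k$ or $\lambda$.

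\emph{Step 1: comparison.} Since $x\mapsto \mu e^x$ is increasing in both $x$ and $\mu$, induction gives $0<g_i\le a_i\le h_i$ for every $i$, every $k$ and every $\lambda\in[A,B]^{\mathbb N}$. Because $A>1/e$, the map $x\mapsto Ae^x$ has no real fixed point, so $g_i\to+\infty$; this is the kind of elementary fact collected in the appendix.

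\emph{Step 2: uniform constants.} Fix an index $i_0$ with $g_i\ge 2$ for all $i\ge i_0$; it depends only on $A$. Then $(P_j)_{j\ge i_0}$ is nondecreasing, and in fact $P_{j+1}\ge 2P_j$ there. Set
\[
K:=\max_{j\le i_0}\prod_{i=1}^{j-1}\max(h_i,1),
\qquad
c:=\prod_{i=1}^{i_0-1}g_i>0,
\]
so that $P_j\le K$ for $j\le i_0$ and $P_{i_0}\ge c$, uniformly. Choose $i_1>i_0$ with $c\,2^{\,i_1-i_0}\ge K$. Then $P_{i_1}\ge K$ uniformly in $k$ and $\lambda$.

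\emph{Step 3: choice of $\alpha_0$ and existence of $m$.} Let $K':=\prod_{i=1}^{i_1}\max(h_i,1)$, a uniform upper bound for $P_{i_1+1}$, and take $\alpha_0:=\log(K'/C)$, enlarged if necessary so that $\alpha_0>0$. For $\alpha>\alpha_0$ we have $e^{-\alpha}P_{i_1+1}\le C$. Since $P_j\to\infty$, the set $\{j: e^{-\alpha}P_j\le C\}$ is finite and nonempty, so a maximal $m$ exists, and $m\ge i_1+1$, i.e.\ $m-1\ge i_1$.

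\emph{Step 4: the monotonicity condition.} For $2\le j\le m-1$ there are two cases.
\begin{itemize}
\item If $j\le i_0$, then $P_j\le K\le P_{i_1}\le P_{m-1}$, using that $P$ is nondecreasing from $i_0$ onward.
\item If $i_0\le j\le m-1$, then $P_j\le P_{m-1}$ directly by monotonicity.
\end{itemize}
This is exactly the second displayed inequality.

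The main obstacle is securing uniformity in $k$ and $\lambda$. The early terms $a_i$ may be smaller than $1$, so the products are not monotone at first, and the index at which they become monotone and exceed their initial values must not depend on the sequence. The sandwich $g_i\le a_i\le h_i$ between the $A$-orbit and the $B$-orbit is what resolves this.
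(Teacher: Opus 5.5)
Your proof is correct and follows essentially the same route as the paper. Both use a uniform index, obtained by comparison with the $A$-orbit, after which the orbit terms exceed $2$; a uniform compensation for the possibly non-monotone early products; and a choice of $\alpha_0$ from the $B$-orbit upper bound, which forces the maximal $m$ to be large. The only difference is cosmetic: the paper bounds the ratio $\prod_{i=j}^{m-2} f^i_{\sigma^k\lambda}(0)$ from below directly by the $A$-orbit, whereas you combine an upper bound $K$ for the early products (from the $B$-orbit) with the doubling $P_{j+1}\ge 2P_j$.
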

\begin{proof}
First observe that there exists $s_0>0$ such that
$$f_{\sigma^k\lambda}^s(0)\geq 2\
\text{for all}\ s\geq s_0.$$ We now verify the second inequality in two cases.

\noindent \textbf{Case 1: } $j\geq s_0$. 

For any $\ell\geq j+2$, we have
\begin{align*}
\frac{\prod_{i=1}^{\ell-2} f_{\sigma^k\lambda}^i(0)}{\prod_{i=1}^{j-1} f_{\sigma^k\lambda}^i(0)}=\prod_{i=j}^{\ell-2} f_{\sigma^k\lambda}^i(0)\geq 1.
\end{align*}

\noindent \textbf{Case 2: } $j<s_0$. 

Let \( m > 0 \) be a sufficiently large integer, to be determined later. Then
\begin{align*}
\frac{\prod_{i=1}^{m-2} f_{\sigma^k \lambda}^i(0)}{\prod_{i=1}^{j-1} f_{\sigma^k \lambda}^i(0)} 
&= \prod_{i=j}^{s_0 - 1} f_{\sigma^k \lambda}^i(0) \cdot \prod_{i=s_0}^{m-2} f_{\sigma^k \lambda}^i(0) \\
&\geq \left( \prod_{i=j}^{s_0 - 1} f_{A^\N}^i(0) \right) \left( \prod_{i=s_0}^{m-2} f_{A^\N}^i(0) \right).
\end{align*}where $A^{\N}:=(A, A, \cdots, A, \cdots).$

Let
\[
M = \min \left\{ \prod_{i=j}^{s_0 - 1} f_{A^\N}^i(0) : 1 \leq j \leq s_0 - 1 \right\}.
\]
Then the product above is at least
$M \cdot \displaystyle\prod_{i=s_0}^{m-2} f_{A^\mathbb{N}}^i(0).$
Because \( f_{A^\N}^i(0) \to \infty \) as \( i \to \infty \), we can pick \( m_0 > s_0 \) such that for every \( m \geq m_0 \),
\[
\prod_{i=s_0}^{m-2} f_{A^\N}^i(0) \geq \frac{1}{M}.
\]
Therefore, for all \( m \geq m_0 \) and \( 2 \leq j \leq m-1 \), we have
\[
\prod_{i=1}^{j-1} f_{\sigma^k \lambda}^i(0) \leq \prod_{i=1}^{m-2} f_{\sigma^k \lambda}^i(0).
\]

It remains to show that the maximal \( m \) satisfying the first inequality (\ref{eqn-m}) is not smaller than \( m_0 \). Choose \( \alpha_0 > 0 \) such that
\begin{equation} \label{eq-inequality}
e^{-\alpha_0} \prod_{i=1}^{m_0 - 1} f_{B^\N}^i(0) < C.
\end{equation}
Suppose, for contradiction, that for some \( \alpha > \alpha_0 \), the maximal such \( m \) is less than \( m_0 \). Then
\[
e^{-\alpha} \prod_{i=1}^{m_0 - 1} f_{\sigma^k \lambda}^i(0) > C.
\]
Since \( f_{\sigma^k \lambda}^i(0) \leq f_{B^\N}^i(0) \) for all \( i \), we have
\[
e^{-\alpha_0} \prod_{i=1}^{m_0 - 1} f_{B^\N}^i(0) 
\geq e^{-\alpha} \prod_{i=1}^{m_0 - 1} f_{\sigma^k \lambda}^i(0) > C,
\]
contradicting \eqref{eq-inequality}. Hence, the maximal \( m \) is at least \( m_0 \), and the lemma is proved.

\end{proof}

Let $K$ be the constant given by Koebe's distortion theorem such that for any $\delta>0$, if $\varphi$ is a conformal map of a square $S(z_0, 2\delta)$ into $\C$, then $$\frac{1}{K}\le\frac{|\varphi'(z)|}{|\varphi'(w)|}\le K\quad\text{for}\ z, w\in S(z_0, 3\delta/2).$$

The proof of Theorem \ref{thm-key} follows an argument similar to that of Rees \cite{Rees86}. We therefore also need to prove the following lemma, which corresponds to \cite[Lemma 1]{Rees86}.
\begin{lemma}\label{lem-Ree}
    Let \( S = S(w_0, 1) \) with \( \Re(w_0) = -\alpha \) and \(\alpha \geq \alpha_0\). For sufficiently large \(\alpha_0\) and any $k\in \mathbb{N}$, define $m:=m(\alpha,k)$ as the integer satisfying 
    \[
    m=\max\left\{n\in\N:e^{-\alpha+2} \prod_{i=1}^{n-1} f_{\sigma^{k+1}\lambda}^i(0) \leq \frac{1}{BeK}\right\}.
    \]
    Then \( f_{\sigma^k \lambda}^{m+1}\bigg|_S \) is conformal and the following hold:
    \begin{enumerate}
        \item \( \dfrac{1}{K} \leq \dfrac{|( f_{\sigma^k \lambda}^{m+1})'(z)|}{|( f_{\sigma^k \lambda}^{m+1})'(w)|} \leq K \) for all \( z \), \( w \in S \), and
        \[
         f_{\sigma^k \lambda}^{m+1}(S) \subseteq K|( f_{\lambda_{k+m+1}})'(w)| S\left(f_{\sigma^k \lambda}^{m}(0), \frac{1}{2}f_{\sigma^k \lambda}^{m}(0) \right),
        \]
        \item  \( d(f_{\sigma^{k+1} \lambda}^{i}(0), f_{\sigma^k \lambda}^{i+1}(S)) < e^{-\alpha/2} \) if \( i \leq m-2 \), and \( d(f_{\sigma^{k+1} \lambda}^{m-1}(0), f_{\sigma^k \lambda}^{m}(S)) \leq  \frac{1}{e} \),
        \item \( \Re(z) > \alpha/2 \) if \( z \in f_{\sigma^k \lambda}^{m}(S) \), and \( \Re(z) > \exp(\alpha/2) \) if \( z \in f_{\sigma^k \lambda}^{m+1}(S) \),
        \item \( \operatorname{diam}(f_{\sigma^k \lambda}^{m+1}(S)) \geq \dfrac{2A}{Be^3K^2} \).
    \end{enumerate}
\end{lemma}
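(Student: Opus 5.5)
Following Rees' proof of \cite[Lemma 1]{Rees86}, we track the images of $S$ as a perturbation of the orbit of the singular value $0$ under $f_{\sigma^{k+1}\lambda}$. Write $\mu_j:=\lambda_{k+j}$, so that $f_{\sigma^k\lambda}^{j}=f_{\mu_j}\circ\cdots\circ f_{\mu_1}$. Put $a_i:=f_{\sigma^{k+1}\lambda}^i(0)$ with $a_0=0$. Then $a_i$ is the $i$-th image of $0$ under the maps $f_{\mu_2},f_{\mu_3},\dots$.

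\textbf{First step.} Since $\Re z\in[-\alpha-1,-\alpha+1]$ on $S$, the set $f_{\mu_1}(S)$ lies in the disk $D(0,Be^{-\alpha+1})$. In other words, the first image is within distance $\varepsilon_1:=Be^{1-\alpha}$ of $a_0=0$.

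\textbf{Induction.} Assume $f_{\sigma^k\lambda}^{j}(S)\subset D(a_{j-1},\varepsilon_j)$ with $\varepsilon_j\le 1$. Since $|e^{u}-1|\le e|u|$ for $|u|\le1$, we get $|f_{\mu_{j+1}}(z)-a_j|\le a_j e\varepsilon_j$. Hence we may take $\varepsilon_{j+1}=e\,a_j\varepsilon_j$, which gives
\[
\varepsilon_{j+1}\le B e^{1-\alpha}e^{j}\prod_{i=1}^{j}a_i .
\]
The extra factors $e^{j}$ are harmless. For large $i$ the values $a_i$ grow super-exponentially, uniformly in $k$ by comparison with $A^{\mathbb N}$ as in Lemma~\ref{lem61}. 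So for $j$ of size $m$, the factor $e^{j}$ is absorbed into the $e^{2}$ and the constant $\frac1{BeK}$ in the definition of $m$. We would adjust the bookkeeping, replacing $e|u|$ by a sharper linearization $(1+o(1))|u|$ when $|u|$ is small, so that the product is exactly the quantity controlled by $m$.

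\textbf{Deriving (ii) and (iii).} The definition of $m$, together with the monotonicity statement of Lemma~\ref{lem61}, gives $\varepsilon_{i+1}\le e^{-\alpha+2}\prod_{l\le i}a_l\cdot(\text{const})<e^{-\alpha/2}$ for $i\le m-2$. The last step only gives $\le 1/e$. This is (ii).

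For conformality: each disk $D(a_{j-1},\varepsilon_j)$ has radius less than $1<\pi$, so $f_{\mu_{j+1}}$ is injective on it. Injectivity on $S$ itself holds because $S$ has height $2<2\pi$.

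For (iii): maximality of $m$ together with Lemma~\ref{lem61} forces $\prod_{i=1}^{m-1}a_i\gtrsim e^{\alpha}$. Combined with the super-exponential growth of $a_i$, this gives $a_{m-1}\ge \alpha/2+1$. Hence $\Re z>\alpha/2$ on $f^m(S)$, and then $\Re f^{m+1}>\exp(\alpha/2)$, because the imaginary parts stay within about $1/e$ of $0$ and the cosine factor stays bounded below.

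\textbf{Distortion and (i), (iv).} The image $f^{m}(S)$ lies in a disk of radius $\le 1/e$ about a real point. We extend the whole branch to the square $S(w_0,2)$: the same estimates with $\alpha$ replaced by $\alpha-1$ still keep every image inside disks of radius less than $\pi$. Koebe's distortion theorem, with the constant $K$ fixed before Lemma~\ref{lem61}, then gives the derivative ratio bound in (i).

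The containment in (i) uses two facts. First, the derivative of $f^{m+1}$ equals $\prod_j |f_{\mu_j}'|$. Second, its image is centred near $f_{\mu_{m+1}}(a_{m-1})$, which is comparable to $f^m_{\sigma^k\lambda}(0)$ up to the scaling stated.

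For (iv): we need a lower bound on the derivative, namely $|(f^{m+1})'|\ge e^{-\alpha-1}A\prod_{i\le m}a_i\cdot(\text{const})$. By maximality of $m$, $e^{-\alpha+2}\prod_{i=1}^{m}a_i>\frac1{BeK}$. With the distortion constant $K$ and the side length $2$ of $S$, this yields $\operatorname{diam}\ge \frac{2A}{Be^3K^2}$.

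\textbf{Main obstacle.} The hard part is keeping all constants uniform in $k$ and $\lambda$. This means controlling the deviation $\varepsilon_j$ for small $j$, where the $a_i$ are not yet large, using Lemma~\ref{lem61}, and making sure that the maximality of $m$ gives both the upper bounds in (ii) and the lower bounds in (iii) and (iv) with the exact stated constants.
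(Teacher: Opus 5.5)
Your strategy matches the paper's (Rees'): follow $f^{j}_{\sigma^k\lambda}(S)$ along the singular orbit $a_{j-1}=f^{j-1}_{\sigma^{k+1}\lambda}(0)$ with deviation of order $e^{-\alpha}\prod_{i<j}a_i$. You pass to $2S$ so that the factor $e^{-\alpha+2}$ in the definition of $m$ gives univalence there and distortion $K$ on $S$, and you use maximality of $m$ for (iv). Replacing the paper's step-by-step Koebe estimates by the explicit bound on $e^u-1$ is a harmless variation. However, three steps do not work as written.

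First, $e^{j}$ cannot be absorbed into fixed constants. Since $a_i\le f^i_{B^{\mathbb N}}(0)$, $m(\alpha)\to\infty$ uniformly, and your crude recursion only gives $\varepsilon_m\le e^{m-3}/K$, which does not even certify $\varepsilon_m\le 1$. Your fallback is the right fix, but it has to be run in two passes. With $|e^u-1|\le |u|e^{|u|}$ one gets $\varepsilon_{j+1}\le \varepsilon_1\prod_{i\le j}a_i\exp\bigl(\sum_{l\le j}\varepsilon_l\bigr)$. The crude bound is harmless for $j\le m-1$ and gives $\varepsilon_j\ll e^{-\alpha/2}$ there. Hence $\sum_{l<m}\varepsilon_l=o(1)$, and the last step yields $\varepsilon_m\le (1+o(1))/(e^2K)$, resp.\ $(1+o(1))/(eK)$ on $2S$. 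The paper's factor $(2\cdot 9)^{2j}$ is the same phenomenon: it absorbs it for $j\le m-1$ via the size of $a_{m-1}$ and treats the last step with a single application of $K$.

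Second, in (ii) the definition of $m$ and the monotonicity in Lemma~\ref{lem61} only reduce you to $P_{m-2}:=\prod_{l\le m-2}a_l$. They give $P_{m-2}\le e^{\alpha-3}/(BKa_{m-1})$, which is useless because $a_{m-1}$ may be only of order $\alpha$ (see below). You need the growth input the paper takes from Proposition~\ref{pro1}. Let $c_0>0$ be a uniform lower bound for $P_{m-2}$. Then $a_{m-1}\le P_{m-1}/c_0\le Ce^{\alpha}$ forces $a_{m-2}\le \alpha+C$, and $P_{m-2}\le C a_{m-2}^{1+\epsilon}$, so $P_{m-2}$ is polynomial in $\alpha$. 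Together with $m$ growing far more slowly than $\alpha$, this is also what makes $e^{j}$ harmless for $j\le m-1$.

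Third, in (iii) the claim $\prod_{i\le m-1}a_i\gtrsim e^{\alpha}$ is false. Maximality concerns $\prod_{i\le m}a_i>e^{\alpha-3}/(BK)$. For $\alpha$ with $e^{\alpha-3}/(BK)$ just below $\prod_{i\le m}a_i$, one has $a_{m-1}\approx\alpha$ and $\prod_{i\le m-1}a_i=\alpha^{1+o(1)}$. The correct route is to use $\prod_{i\le m}a_i\le a_m^{1+\epsilon}$ to get $a_m\ge e^{(1-\epsilon)\alpha-C}$. Hence $a_{m-1}=\log(a_m/\lambda_{k+m+1})\ge(1-\epsilon)\alpha-C'$, which yields both halves of (iii).

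Finally, your sentence on the inclusion in (i) is not an argument. Note that $f_{\mu_{m+1}}(a_{m-1})=a_m=f^m_{\sigma^{k+1}\lambda}(0)$, not $f^m_{\sigma^k\lambda}(0)$. The paper does not prove that inclusion either, but your estimates do give $f^{m+1}_{\sigma^k\lambda}(S)\subset\{a_me^{u}:|u|\le\varepsilon_m\}\subset S(a_m,\tfrac12 a_m)$.
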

\begin{proof}
Choose $\alpha_0$ sufficiently large such that for any $\alpha>\alpha_0$, the following property holds:
$$
\frac{\sum_{j=1}^{m-1}2^{2j-3}\cdot 9^{2j-2}}{BeK f_{\sigma^{k+1}\lambda}^{m-1}(0)}\leq 1/2. 
$$
Set $S'=2S=S(w_0, 2)$ and $S''=\frac{3}{2}S=S(w_0, 3/2)$.
Define
$$
C=\min_{\lambda\in [A, B]^\mathbb{N}}\min\left\{\frac{\prod\limits_{i=j}^{m-1}{f_{\sigma^{k+1}\lambda}^{i}(0)}}
{f_{\sigma^{k+1}\lambda}^{m-1}(0)}: 1\leq j\leq m-2\right\}.
$$
For any $t\in[A, B]$,
the map $f_{t}$ is conformal form $S'$ to $f_{t}(S')$, and
$$
f_{t}(S')\subset D(0, Be^{-\alpha+2}),\quad
\text{diam}(f_{t}(S'))\geq 2Ae^{-\alpha}. 
$$
These estimates hold in particular for  $t=\lambda_{k+1}$.

Let $r_1=Be^{-\alpha+2}$. Then, 
$$
r_1\leq \frac{1}{eK\prod_{i=1}^{m-1} f_{\sigma^{k+1}\lambda}^i(0)}\leq \frac{1}{CeK f_{\sigma^{k+1}\lambda}^{m-1}(0)} .
$$
Without loss of generality we may assume that $r_1\leq \frac{1}{2}$. Since $f_{\lambda_{k+2}}$ is injective on $D(0, 1)$, Koebe's distortion theorem gives
\begin{align*}
\operatorname{diam}\left(f_{\sigma^k\lambda}^2(S')\right)
&\leq |f_{\lambda_{k+2}}'(0)| \cdot \frac{(1+r_1)^4}{(1-r_1)^4} \cdot \operatorname{diam}(f_{\lambda_{k+1}}(S')) \\
&\leq |f_{\lambda_{k+2}}'(0)| \cdot \frac{(1+r_1)^4}{(1-r_1)^4} \cdot 2r_1 \\
&\leq 2 \cdot 9^2 |f_{\lambda_{k+2}}'(0)| \cdot r_1.
\end{align*}
Denote
$$
r_2=2^2\cdot 9^2|f_{\lambda_{k+2}}'(0)|r_1=2^2\cdot 9^2|f_{\lambda_{k+2}}(0)|r_1.
$$
Then,
$$
r_2\leq \frac{2^2\cdot 9^2}{eK f_{\sigma^{k+1}\lambda}^{m-1}(0)}\leq \frac{1}{2},
\qquad f_{\sigma^k\lambda}^2(S')\subset D(0, r_2).$$
Proceeding inductively, for \(3\le j\le m-1\), suppose \(r_{j-1}\le\frac12\) has been defined. 
Then
\begin{align*}
\operatorname{diam}\left(f_{\sigma^{k}\lambda}^j(S')\right)
&\leq |f_{\lambda_{k+j}}'(f_{\sigma^{k+1}\lambda}^{j-2}(0))| \cdot \frac{(1+r_{j-1})^4}{(1-r_{j-1})^4} \cdot \operatorname{diam}(f_{\sigma^k\lambda}^{j-1}(S')) \\
&\leq |f_{\lambda_{k+j}}'(f_{\sigma^{k+1}\lambda}^{j-2}(0))| \cdot \frac{(1+r_{j-1})^4}{(1-r_{j-1})^4} \cdot 2r_{j-1} \\
&\leq 2 \cdot 9^2 |f_{\lambda_{k+j}}'(f_{\sigma^{k+1}\lambda}^{j-2}(0))| \cdot r_{j-1}.
\end{align*}
Set
$$r_j=2^2\cdot 9^2|f_{\lambda_{k+j}}'(f_{\sigma^{k+1}\lambda}^{j-2}(0))|r_{j-1}=2^2\cdot 9^2|f_{\sigma^{k+1}\lambda}^{j-1}(0)|r_{j-1},$$ 
which implies
\begin{align*}
   r_j&=2^{2j-2}\cdot 9^{2j-2}\prod_{i=1}^{j-1}|f_{\sigma^{k+1}\lambda}^{i}(0)|r_1\\
   &\le \frac{2^{2j-2}\cdot 9^{2j-2}\prod_{i=1}^{j-1}|f_{\sigma^{k+1}\lambda}^{i}(0)|}{eK\prod_{i=1}^{m-1}|f_{\sigma^{k+1}\lambda}^{i}(0)|}
   = \frac{2^{2j-2}\cdot 9^{2j-2}}{eK\prod_{i=j}^{m-1}|f_{\sigma^{k+1}\lambda}^{i}(0)|}
\end{align*}
Hence, $r_j\leq \frac{1}{2}$ and
$$
\text{diam}(f_{\sigma^k\lambda}^{j}(S'))\leq \frac{r_j}{2}
\leq \frac{2^{2j-3}\cdot 9^{2j-2}}{eK f_{\sigma^{k+1}\lambda}^{m-1}(0)}\leq \frac{1}{2}.
$$
This implies that
 \( d(f_{\sigma^{k+1} \lambda}^{j}(0), f_{\sigma^k \lambda}^{j+1}(S')) < e^{-\alpha/2} \) if \( j \leq m-2. \)

Now, $f_{\lambda_{k+m}}$ is injective on $f_{\sigma^k\lambda}^{m-1}(S')$.  By Koebe's distortion theorem, we have
$$
\mathrm{diam}(f_{\sigma^k\lambda}^{m}(S''))\leq 2KBe^{-\alpha+2}\prod_{j=1}^{m-1}f_{\sigma^{k+1}\lambda}^{j}
(0)\leq\frac{2}{e}<1.
$$
Hence 
\[ d(f_{\sigma^{k+1} \lambda}^{m-1}(0), f_{\sigma^k \lambda}^{m}(S'')) \leq 2. \]
Moreover,
$f_{\lambda_{k+m+1}}$ is injective on $f_{\sigma^k\lambda}^{m}(S'')$. Applying Koebe’s distortion theorem once more gives  

\[ \dfrac{1}{K} \leq \dfrac{|( f_{\sigma^k \lambda}^{m+1})'(z)|}{|( f_{\sigma^k \lambda}^{m+1})'(w)|} \leq K \] for all \( z \), \( w \in S \)
and
\begin{align*}
\operatorname{diam}(f_{\sigma^k \lambda}^{m+1}(S))&\ge\frac{1}{K}|(f_{\sigma^{k+1}\lambda}^m)'(0)|\text{diam}(f_{\lambda_{k+1}}(S)) \\
&\geq
\frac{1}{K}2Ae^{-\alpha}\prod_{j=1}^{m}
f_{\sigma^{k+1}\lambda}^{j}(0)\geq   \dfrac{2A}{Be^3K^2}.
\end{align*}
For the statement (ii), if $i\leq m-2$, from the Koebe distortion theorem, we have
\begin{align*}
    d(f_{\sigma^{k+1} \lambda}^{i}(0), f_{\sigma^k \lambda}^{i+1}(S))&\leq K|(f_{\sigma^{k+1}\lambda}^i(0))'|d(0, f_{\lambda_{k+1}}(S))\\
    &\le KBe^{-\alpha+2}\prod_{j=1}^i|f_{\sigma^{k+1}\lambda}^j(0)|\\
(\text{by Proposition\ \ref{pro1}})    &\le KBe^{2-\alpha}(f_{\sigma^{k+1}\lambda}^{m-2}(0))^{3/2}<e^{-\alpha/2}.
\end{align*}
If $i=m-1$, we have 
\begin{align*}
    d(f_{\sigma^{k+1} \lambda}^{m-1}(0), f_{\sigma^k \lambda}^{m}(S))&\leq K|(f_{\sigma^{k+1}\lambda}^{m-1}(0))'|d(0, f_{\lambda_{k+1}}(S))\\
    &\le KBe^{-\alpha+2}\prod_{j=1}^{m-1}|f_{\sigma^{k+1}\lambda}^j(0)|\le\frac{1}{e}
\end{align*}

The remaining claims in (iii) follow directly from the construction and the chosen bounds.
\end{proof}

\begin{proof}[Proof of Theorem \ref{thm-key}]
Fix a sufficiently large $M$. Define $h(x) = \exp\left(\frac{x}{2}\right)$ and set $M_1:=M$ and $M_n = h^{n-1}(M)$ for $n \in \mathbb{N}$. Then $M_n > M_{n-1}$ and $M_n \to \infty$ as $n \to \infty$.

Let $Q_0^1$ be a square of side length exactly $2$ in $\mathcal{H}_M$, and define $$\mathcal{F}_0 = \mathcal{G}_0 = {Q_0^1},\quad X_0 = Q_0^1.$$

We now define a sequence of families $\{\mathcal{F}_k\}_{k=1}^\infty$ recursively. starting with the construction of $\mathcal{F}_1$. 
By applying Proposition \ref{pro-type0} and \ref{pro-grid} to $(D=Q_0^1, D'=2Q_0^1)$, we obtain a finite collection of squares  $\{Q_{01}^{1k_1}\}_{k_1\in I_{0}^{1}}$ with side-length $2$, contained in $f_{\lambda_1}(D)\subset\{z: |\Re z| >M_2\}$ such that
$$
\frac{\sum\limits_{k_1\in I_{0}^{1}}
\mathrm{Area}(Q_{01}^{1k_1})}{\mathrm{Area}(
f_{\lambda_1}(Q_{0}^{1}))}\geq 1- \frac{c}{M_1}\ \text{for some constant}\ c>0.
$$
Here $I_0^1$ is a finite index set.

Define $$I_0^{1,+}:=\{k_1\in I_0^1: Q_{01}^{1k_1}\subset\{\Re z>M_2\}\}$$ and 
$$I_0^{1,-}:=\{k_1\in I_0^1: Q_{01}^{1k_1}\subset\{\Re z<-M_2\}\}.$$
Clearly, $I_0^1=I_0^{1,-}\cup I_0^{1,+}.$

For each $k_1\in I_{0}^{1,-}$, from Lemma \ref{lem-Ree}, there exists $l_1^{k_1}>2$ such that
$f_{\sigma \lambda}^{l_1^{k_1}}(Q_{01}^{1k_1})$ is a Jordan domain with $L$-bounded shape whose diameter is bounded below by an absolute constant $l>0$. Moreover,
$$f_{\sigma \lambda}^{l_1^{k_1}-1}(Q_{01}^{1k_1})\subset \{z: \Re z>M_2\}.$$
In this case, using Proposition \ref{pro-type0} and \ref{pro-grid} to the pair $(D:=f_{\sigma \lambda}^{l_1^{k_1}-1}(Q_{01}^{1k_1}), D':=f_{\sigma \lambda}^{l_1^{k_1}-1}(2Q_{01}^{1k_1}))$, we obtain that
$f_{\sigma \lambda}^{l_1^{k_1}}(Q_{01}^{1k_1})$ also contains finitely many squares
$\left\{Q_{01l_1^{k_1}}^{1k_1k_2}\right\}_{k_2\in I_{01}^{1k_1}}$ with side-length 2 satisfying
$$
\frac{
\sum\limits_{k_2\in I_{01}^{1k_1}}
\mathrm{Area}\left(Q_{01l_{1}^{k_1}}^{1k_1k_2}\right)}{\mathrm{Area}\left(
f_{\sigma\lambda}^{l_1^{k_1}}(Q_{01}^{1k_1})\right)}\geq 1-\frac{c}{M_2}.
$$ where $\{I_{01}^{1k_1}\}$ is also a finite index.

Define $$\mathcal{F}_1=\left\{Q_{01l_{1}^{k_1}}^{1k_1k_2}: k_1\in I_{0}^{1,-}, k_2\in I_{01}^{1k_1}\right\}\cup\{Q_{01}^{1k_1}: k_1\in I_{0}^{1,+}\}$$
and
$$
\mathcal{G}_1=\left\{Q_{0}^{1}\cap \left(f_{\lambda}^{l_{1}^{k_1}+1}\right)^{-1}Q_{01l_1^{k_1}}^{1k_1k_2}: k_1\in I_{0}^{1,-}, k_2\in I_{01}^{1k_1}\right\}\cup\left\{Q_0^1\cap f_{\lambda_1}^{-1}
(Q_{01}^{1k_1}): k_1\in I_{0}^{1,+}\right\}
$$ 
For any $G\in \mathcal{G}_1$, we have
$$
\inf_{z\in G}|z|\geq M_1.
$$
By Koebe's distortion theorem, we have
$$
\frac{
\sum\limits_{k_1\in I_{0}^{1,-}}\sum\limits_{k_2\in I_{01}^{1k}}
\mathrm{Area}\left(Q_{0}^{1}\cap \left(f_{\lambda}^{l_1^{k_1}+1}\right)^{-1}
Q_{01l_1^{k_1}}^{1k_1k_2}\right)}{
\sum\limits_{k_1\in I_{0}^{1, +}}\mathrm{Area}\left(Q_{0}^{1}\cap
f_{\lambda_1}^{-1}
(Q_{01}^{1k_1})\right)}\geq 1-\frac{c^2}{M_2}.
$$
Set $X_1=\bigcup\limits_{U\in \mathcal{G}_1} U$. 
Then,
\begin{align*}
\mathrm{Area}(X_1)&=\sum\limits_{U\in \mathcal{G}_1}\mathrm{Area}(U)\\
&=\sum\limits_{k_1\in I_{0}^{1,-}}\sum\limits_{k_2\in I_{01}^{1k}}
\mathrm{Area}\left(Q_{0}^{1}\cap \left(f_{\lambda}^{l_1^{k_1}+1}\right)^{-1}
Q_{01l_1^{k_1}}^{1k_1k_2}\right)+\sum\limits_{k_1\in I_{0}^{1, +}}\mathrm{Area}\left(Q_{0}^{1}\cap
f_{\lambda_1}^{-1}
(Q_{0,1}^{1k_1})\right)\\
&\geq (1-\frac{c^2}{M_2})\sum_{k_1\in I_{0}^{1,+}}
\mathrm{Area}\left(Q_{0}^{1}\cap
f_{\lambda_1}^{-1}
(Q_{0,1}^{1k_1})\right)\geq (1-\frac{c^2}{M_2})(1-\frac{c}{M_1})\mathrm{Area}(Q_{0}^{1})
\\
&\geq (1-\frac{2c}{M_1})\mathrm{Area}(Q_{0}^{1})=(1-\frac{2c}{M_1})\mathrm{Area}(X_0).
\end{align*}
Proceeding inductively, we can construct $\mathcal{F}_{m}, \mathcal{G}_{m}, X_m$ and finite index sets $I_{01l_{1}^{k_1}\dots l_{m-1}^{k_{m-1}}}^{1k_1k_2\dots k_m}$ for $m\leq s-1$ such that
$X_0\supset X_1\supset \dots \supset X_{s-1}$ and 
$$
\mathrm{Area}(X_{s-1})\geq (1-\frac{2c}{M_{s-1}})\mathrm{Area}(X_{s-2}).
$$

 We now illustrate the construction of $\mathcal{F}_{s}, \mathcal{G}_{s}, X_{s}$ in the following.
 
For any $Q\in \mathcal{F}_{s-1}$, actually, we can write 
$Q=Q_{01l_1^{k_1}\cdots l_{s-1}^{k_{s-1}}}^{1k_1k_2\cdots k_{s}}$ and note that
$$
\inf_{z\in Q}|\Re z|\geq M_s.
$$
\textbf{Case 1:} Suppose $k_s\in I_{01l_{1}^{k_1}\dots l_{s-2}^{k_{s-2}}}^{1k_1k_2\dots k_{s-1}, -}$, i.e., $Q\subset\{z: \Re z<-M_s\}$. Then, there exists an integer $l_{s}^{k_s}>2$ such that
$f_{\sigma^{t_s}\lambda}^{l_{s}^{k_s}}(Q)$ is a Jordan domain of $L$-bounded shape with diameter bounded below by an absolute constant $l>0$, where $t_s=1+\sum\limits_{j=1}^{s-1}l_{j}^{k_j}$. Moreover,
$$f_{\sigma^{t_s}\lambda}^{l_{s}^{k_s}}(Q)\subset \{z: \Re z>M_{s+1}\}$$ and it contains finitely many squares of side-length 2,
$$\left\{Q_{01l_{1}^{k_1}\dots l_{s-1}^{k_{s-1}}l_{s}^{k_s}}^{1k_1k_2\dots k_sk_{s+1}}:
k_{s+1}\in I_{01l_{1}^{k_1}\dots l_{s-1}^{k_{s-1}}}^{1k_1k_2\dots k_s}
\right\}$$
where $I_{01l_{1}^{k_1}\dots l_{s-1}^{k_{s-1}}}^{1k_1k_2\dots k_s}$ is a finite index set. 

\noindent\textbf{Case 2:} Suppose $k_s\in I_{01l_{1}^{k_1}\dots l_{s-2}^{k_{s-2}}}^{1k_1k_2\dots k_{s-1}, +}$, i.e., $Q\subset\{z:\Re z>M_s\}$. Then $f_{\sigma^{t_s}\lambda}^{1}(Q)=f_{\lambda_{t_s+1}}(Q)$ contains finitely many squares of side-length $2$, 
$$\left\{Q_{01l_{1}^{k_1}\dots l_{s-1}^{k_{s-1}}1}^{1k_1k_2\dots k_sk_{s+1}}:
k_{s+1}\in I_{01l_{1}^{k_1}\dots l_{s-1}^{k_{s-1}}}^{1k_1k_2\dots k_s}
\right\}.
$$
Now, define
$$\mathcal{F}_{s+1, -}=
\left\{Q_{01l_{1}^{k_1}\dots l_{s-1}^{k_{s-1}}l_{s}^{k_s}}^{1k_1k_2\dots k_sk_{s+1}}: 
Q_{01l_{1}^{k_1}\dots l_{s-1}^{k_{s-1}}}^{1k_1k_2\dots k_s}\in \mathcal{F}_n, k_s\in I_{01l_{1}^{k_1}\dots l_{s-2}^{k_{s-2}}}^{1k_1k_2\dots k_{s-1}, -}, k_{s+1}\in I_{01l_{1}^{k_1}\dots l_{s-1}^{k_{s-1}}}^{1k_1k_2\dots k_s}
\right\},$$
$$
\mathcal{F}_{s+1, +}=
\left\{Q_{01l_{1}^{k_1}\dots l_{s-1}^{k_{s-1}}1}^{1k_1k_2\dots k_sk_{s+1}}: 
Q_{01l_{1}^{k_1}\dots l_{s-1}^{k_{s-1}}}^{1k_1k_2\dots k_s}\in \mathcal{F}_n, k_s\in I_{01l_{1}^{k_1}\dots l_{s-2}^{k_{s-2}}}^{1k_1k_2\dots k_{s-1}, +}, k_{s+1}\in I_{01l_{1}^{k_1}\dots l_{s-1}^{k_{s-1}}}^{1k_1k_2\dots k_s}
\right\},
$$
and set
$$\mathcal{F}_{s+1}=\mathcal{F}_{s+1, -}\cup\mathcal{F}_{s+1, +}.$$  
Define further
$$
\mathcal{G}_{s+1, -}=\left\{Q_{0}^{1}\cap (f_{\lambda}^{t_s+l_s^{k_s}})^{-1}(U): U\in \mathcal{F}_{s+1, -}\right\},
$$
$$
\mathcal{G}_{s+1, +}=\left\{Q_{0}^{1}\cap (f_{\lambda}^{t_s+1})^{-1}(U): U\in \mathcal{F}_{s+1, +}\right\}
$$
and set
$$
\mathcal{G}_{s+1}=\mathcal{G}_{s+1, -}\cup\mathcal{G}_{s+1, +}
$$
Let $X_{s+1}=\bigcup\limits_{V\in \mathcal{G}_{s+1}}V$. As in the previous step, we obtain
$$
\mathrm{Area}(X_{s+1})\geq (1-\frac{2c}{M_{s+1}})
\mathrm{Area}(X_{s}). 
$$
Finally, write $X_{\infty}=\displaystyle\bigcap_{s=1}^{\infty}X_{s}$. Then,
$$\mathrm{Area}(X_{\infty})>0.$$
\end{proof}

\section{Appendix}
In this section, we give some elementary inequality of exponential functions.
\begin{lemma}\label{lemma-ratio}
Let $1/e<A<B<\infty$. 
For any $t\in[A, B]$ and $x\in (0, +\infty)$, one has
\begin{equation}\label{eq-one-step}
f_t(x)\ge Aex.
\end{equation}
Moreover,
for any sequence $\lambda\in[A, B]^{\N}$ and $n\in\N$, we have
$$f_{\lambda}^n(x)\geq f_{A^{\N}}^n(0)=f_{A^{\N}}^{n-1}(A),$$ and 
$$
\lim_{n\to\infty}f_{\lambda}^n(x)=\infty, 
$$
for every $x\in(0,\infty)$,
where $A^{\N}:=(A, A, \cdots, A, \cdots)$.
\end{lemma}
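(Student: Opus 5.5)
The plan is to obtain the one-step inequality \eqref{eq-one-step} from the convexity of the exponential, and then propagate it along the orbit using monotonicity in both $x$ and the parameter.

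\textbf{Step 1: the one-step bound.} Fix $t\in[A,B]$ and $x>0$. Since $e^{y}\ge e\,y$ for all real $y$ (the tangent line at $y=1$), we get
\[
f_t(x)=te^{x}\ge A e^{x}\ge Ae\,x .
\]
This proves \eqref{eq-one-step}. Note that $Ae>1$ because $A>1/e$.

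\textbf{Step 2: comparison with the constant sequence.} Each $f_t$ is increasing on $\mathbb{R}$, and $f_t(y)\ge f_A(y)$ for every real $y$ whenever $t\ge A$. We argue by induction on $n$ that $f_\lambda^n(x)\ge f_{A^{\N}}^n(0)$ for every $x\ge 0$.
\begin{itemize}
\item For $n=1$: $f_{\lambda_1}(x)\ge Ae^{x}\ge A=f_A(0)$.
\item Inductive step: if $f_\lambda^{n-1}(x)\ge f_{A^{\N}}^{n-1}(0)$, then
\[
f_\lambda^{n}(x)=f_{\lambda_n}\bigl(f_\lambda^{n-1}(x)\bigr)\ge f_A\bigl(f_\lambda^{n-1}(x)\bigr)\ge f_A\bigl(f_{A^{\N}}^{n-1}(0)\bigr)=f_{A^{\N}}^n(0).
\]
The first inequality uses $\lambda_n\ge A$, and the second uses that $f_A$ is increasing.
\end{itemize}
The identity $f_{A^{\N}}^n(0)=f_{A^{\N}}^{n-1}(A)$ is immediate from $f_A(0)=A$.

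\textbf{Step 3: divergence.} By Step 1, every orbit point stays positive and grows by a factor:
\[
f_\lambda^n(x)\ge (Ae)^n x\to\infty \quad\text{for } x>0,
\]
since $Ae>1$. A second route, which also covers $x=0$, goes through $f_{A^{\N}}^n(0)\to\infty$. The sequence $f_{A^{\N}}^n(0)$ is increasing, because $f_A(y)\ge Aey> y$ for $y>0$. If it had a finite limit $b$, then $b=Ae^{b}$. This is impossible, because $Ae^{b}\ge Ae\,b>b$ for $b>0$.

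I expect no real obstacle here. The only care needed is in Step 2: the comparison should be applied on the nonnegative reals, and the monotonicity must be used both in the argument and in the parameter.
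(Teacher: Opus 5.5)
Your proof is correct and follows essentially the paper's argument: the one-step bound $f_t(x)\ge Aex$, then comparison with the constant sequence $A^{\N}$ using monotonicity in both $x$ and the parameter, then divergence by iterating the one-step bound. The differences are cosmetic. The paper gets the one-step bound by minimizing $te^x/x$ at $x=1$, which is equivalent to your tangent-line inequality $e^y\ge ey$. For divergence it iterates from $A$ to get $f_{A^{\N}}^{n-1}(A)\ge A(Ae)^{n-1}$, a bound uniform in $x$ and $\lambda$, which your second route also provides.
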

\begin{proof}
Let $t\ge A$. Define $g(x)=f_{t}(x)/x, x>0$. It is not hard to see that $g$ decreases in $(0, 1)$ and increases in $(1, \infty)$. Hence
$$
g(x)\geq g(1)=te\geq Ae,\ \text{for all}\ x>0.
$$ and then we obtain (\ref{eq-one-step}).

Since $\lambda\in[A, B]^{\N}$, it follows that
$$
f_{\lambda}^n(x)\geq f_{A^{\mathbb{N}}}^n(x)\geq f_{A^{\mathbb{N}}}^n(0)=
f_{A^{\mathbb{N}}}^{n-1}(A).
$$
By \eqref{eq-one-step}, we obtain
$$
f_{A^{\mathbb{N}}}^{n-1}(A)\geq A(Ae)^{n-1},
$$
which implies the second part.
\end{proof}

As a result, we derive the following proposition immediately. 
\begin{proposition}\label{pro1}
Let $1/e<A<B<\infty$.
For any $\epsilon>0$, there exists $m_0=m_0(\epsilon, A, B)$ such that for any $m\ge j\ge m_0$ and $\mu=(\mu_n)_{n=1}^{\infty}$, we have
$$
f_{\mu}^{j}(0)\leq (f_{\mu}^{m+1}(0))^{\epsilon}.
$$
\end{proposition}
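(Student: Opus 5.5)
The plan is to exploit the very fast growth of the real orbit of $0$: each term is at least exponential in the previous one, so earlier terms are negligible against later ones on a logarithmic scale. Write $a_n:=f_{\mu}^{n}(0)$ for a fixed sequence $\mu\in[A,B]^{\N}$. Then $a_n>0$ for $n\ge1$ and $a_{n+1}=\mu_{n+1}e^{a_n}$, with $\mu_{n+1}\in[A,B]$.

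First, I would use Lemma \ref{lemma-ratio} to get uniform growth. Since $\mu\in[A,B]^{\N}$, that lemma gives $a_n\ge f_{A^{\N}}^{n}(0)\ge A(Ae)^{n-1}$, and this tends to $\infty$ uniformly in $\mu$. Consequently, for any threshold $T$ there is an index $n_0(T,A)$, independent of $\mu$, with $a_n\ge T$ for all $n\ge n_0$.

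Second, I would compare consecutive terms logarithmically. From $\log a_{n+1}=\log\mu_{n+1}+a_n\ge \log A+a_n$ we get
\[
\frac{\log a_{n+1}}{\log a_n}\ \ge\ \frac{a_n+\log A}{\log a_n}.
\]
This ratio tends to $\infty$ as $a_n\to\infty$, and in particular exceeds $1$ once $a_n\ge T$ for a suitable $T=T(A)$. Thus $\log a_n$ is nondecreasing (indeed increasing) for $n\ge n_0$, so for $m\ge j\ge n_0$ we have $\log a_{m}\ge\log a_j$.

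Third, I would conclude using one extra step. For $m\ge j$,
\[
\log a_{m+1}\ \ge\ \log a_{j+1}\ \ge\ a_j+\log A.
\]
It therefore suffices to have $\log a_j\le\epsilon(a_j+\log A)$. Because $a\mapsto \log a/(a+\log A)\to0$ as $a\to\infty$, this holds as soon as $a_j\ge T_\epsilon$ for some $T_\epsilon=T_\epsilon(\epsilon,A)$. Choosing $m_0$ so large that $f_{A^{\N}}^{m_0}(0)\ge\max(T,T_\epsilon)$ gives the claim, uniformly in $\mu$; in fact $m_0$ depends only on $\epsilon$ and $A$.

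The only delicate point is uniformity in $\mu$, and it is handled by the lower bound $a_n\ge f_{A^{\N}}^{n}(0)$ from Lemma \ref{lemma-ratio}. The upper bound $B$ plays no role here. The monotonicity of $a_n$ follows directly from the recursion once $a_n$ is large.
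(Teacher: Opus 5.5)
Your proof is correct and is essentially the paper's argument: uniform growth via comparison with the constant orbit $f_{A^{\N}}^{n}(0)$, monotonicity of $a_n=f_{\mu}^{n}(0)$, and the one-step bound $a\le (Ae^{a})^{\epsilon}$ for large $a$. The only cosmetic difference is that you apply the one-step bound at index $j$ and monotonicity from $j+1$ to $m+1$, whereas the paper uses monotonicity from $j$ to $m$ and the one-step bound at $m$; your remark that $m_0$ depends only on $\epsilon$ and $A$ is accurate.
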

\begin{proof}
Since $f_{\mu}^{m}(0)$ converges to infinity uniformly in $[A, B]^{\mathbb{N}}$, there exists an integer $m_1:=m_1(A, B)$ such that $f_{\mu}^{m_1}(0)>1$ for all $\mu$. Hence, for any $j\geq m_1$, by the fact that $\frac{1}{e}e^x>x$ for $x>1$, we have 
$$f_{\mu}^j(0)\leq f_{\mu}^m(0).$$
Note that for fixed $\epsilon>0$, the inequality $$A^{\epsilon}\exp(\epsilon x)\geq x$$ holds for $x$ sufficiently large, it follows that there exists an integer $m_2:=m_2(\epsilon)$ such that for $m\ge m_2$, $$f_{\mu}^m(0)\leq (f_{\mu}^{m+1}(0))^{\epsilon}.$$
Taking $m_0=\max(m_1, m_2)$, we complete the proof.
\end{proof}

The following lemma is obvious. 
\begin{lemma}\label{lem-mapping}
Let $\lambda=(\lambda_n)_{n=1}^{\infty}\in [A, B]^{\mathbb{N}}$ with $0<A<B$. Then
$$
\mathrm{Sing}((f_{\lambda}^n)^{-1})=\{f_{\sigma^k\lambda}^{n-k}(0): 1\leq k\leq n-1\}\cup\{0\}. 
$$
\end{lemma}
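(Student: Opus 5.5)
The plan is to deduce the description of $\mathrm{Sing}((f_{\lambda}^n)^{-1})$ from the chain rule for singular values of compositions, and then to identify the singular values of a single factor. The basic fact is this: if $g$ and $h$ are entire, then
\[
\mathrm{Sing}((h\circ g)^{-1})=\overline{\mathrm{Sing}(h^{-1})\cup h(\mathrm{Sing}(g^{-1}))}.
\]
Here each singular set is taken to be the closure of the critical and asymptotic values.

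For the inclusion ``$\subset$'': suppose $w$ is not in the right-hand side. Choose a small disk $D$ around $w$ that avoids $\overline{\mathrm{Sing}(h^{-1})\cup h(\mathrm{Sing}(g^{-1}))}$. Then $h$ is a covering over $D$, so every component of $h^{-1}(D)$ is simply connected. Each such component avoids $\mathrm{Sing}(g^{-1})$, so $g$ is in turn a covering over it, and both maps are univalent on the preimage components. Hence $w$ is not singular for $h\circ g$.

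For the inclusion ``$\supset$'', consider the two kinds of points separately. A critical value of $h$ stays a critical value of $h\circ g$, because $g$ is surjective up to at most one omitted value and we may pick a preimage. An asymptotic value of $h$ along a curve $\gamma$ lifts: either $g^{-1}(\gamma)$ contains a curve tending to $\infty$, or the lift accumulates at a finite point whose image under $g$ is an asymptotic value of $g$. Finally, if $v$ is singular for $g$, then $h(v)$ is singular for $h\circ g$, since $h$ is a local homeomorphism away from its critical points. Closures are then taken on both sides.

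For a single factor $f_{\mu}(z)=\mu e^z$ with $\mu\neq 0$ there are no critical points, and the only asymptotic value is $0$ (approached as $\Re z\to-\infty$). So $\mathrm{Sing}(f_\mu^{-1})=\{0\}$. Now write $f_{\lambda}^n=f_{\lambda_n}\circ f_{\lambda}^{n-1}$ and induct on $n$. The induction yields
\[
\mathrm{Sing}((f_{\lambda}^n)^{-1})=\{0\}\cup\{f_{\lambda_n}\circ\cdots\circ f_{\lambda_{k+1}}(0):1\le k\le n-1\}=\{0\}\cup\{f_{\sigma^k\lambda}^{n-k}(0)\}.
\]
This set is finite, so the closure adds nothing.

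The step I expect to be most delicate is the ``$\supset$'' direction for asymptotic values, namely showing that $h(v)$ really is a singular value of the composition. Here this is easy, because $h=f_{\lambda_n}$ is a nonsingular local homeomorphism everywhere. The curve in the $g$-plane along which $g\to v$ is mapped by $h$ to a curve along which $h\circ g\to h(v)$. Therefore $h(v)$ is an asymptotic value of $h\circ g$.
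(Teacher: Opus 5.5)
The paper gives no proof of this lemma; it simply calls it obvious. Your induction, using the composition rule for singular values and starting from $\mathrm{Sing}(f_\mu^{-1})=\{0\}$, is a natural way to supply the details. Your covering argument for ``$\subset$'' is fine. So is the inclusion $f_{\lambda_n}(\mathrm{Sing}((f_\lambda^{n-1})^{-1}))\subset\mathrm{Sing}((f_\lambda^{n})^{-1})$.

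That inclusion is not the delicate step, though. It is pure continuity: along a curve on which $g\to v$, one has $h\circ g\to h(v)$, and no local-homeomorphism property of $h$ is used. The step you actually leave incomplete is $\mathrm{Sing}(h^{-1})\subset\mathrm{Sing}((h\circ g)^{-1})$. In your induction this is precisely the claim $0\in\mathrm{Sing}((f_\lambda^n)^{-1})$.

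Your dichotomy does not settle it. A lift $\tilde\gamma$ of $\gamma$ defined on all of $[0,\infty)$ cannot accumulate at a finite point, because $g(\tilde\gamma(t))=\gamma(t)\to\infty$. If instead you mean that the lift breaks down at a finite time $t_0$, you only learn that $\gamma(t_0)$ is an asymptotic value of $g$. That says nothing about the value $a=\lim h(\gamma(t))$ you care about, so the second alternative is never resolved.

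In the present setting the repair is easy; here are two ways.
\begin{enumerate}
\item[(i)] $f_\lambda^n$ omits $0$. If $f_\lambda^n$ were a covering over a disk $D\ni 0$, the number of sheets would be constant on the connected set $D$, hence zero. Then $f_\lambda^n$ would omit an open set, contradicting Casorati--Weierstrass.
\item[(ii)] For $n\ge 2$, the inductive hypothesis gives $\mathrm{Sing}((f_\lambda^{n-1})^{-1})\subset[0,\infty)$, since all the points $f^{j}_{\sigma^k\lambda}(0)$ are positive reals. So the ray $\{-t: t\ge 1\}$ lifts under the covering $f_\lambda^{n-1}$ to a curve tending to $\infty$, along which $f_\lambda^n=f_{\lambda_n}\circ f_\lambda^{n-1}\to 0$.
\end{enumerate}

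If you want the general composition rule, replace curve-lifting by a covering argument. Suppose $h\circ g$ is a covering over a disk $D$. Let $U$ be a component of $h^{-1}(D)$ and $V$ a component of $g^{-1}(U)$.
\begin{enumerate}
\item[(a)] $V$ is a component of $(h\circ g)^{-1}(D)$, so $h\circ g$ maps $V$ homeomorphically onto $D$. Hence $h$ maps $g(V)$ homeomorphically onto $D$.
\item[(b)] $g(V)$ is open, and it is closed in $U$ by continuity of $(h|_{g(V)})^{-1}$.
\item[(c)] Therefore $g(V)=U$, so $h$ is a covering over $D$.
\end{enumerate}
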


\bibliographystyle{amsalpha}
  \bibliography{bibfile}
\end{document}